  \theoremstyle{plain}
    \newtheorem{thm}{Theorem}[section]
    \newtheorem{prop}[thm]{Proposition}
   \newtheorem{lemma}[thm]{Lemma}
    \newtheorem{corollary}[thm]{Corollary}
    \newtheorem{subsec}[thm]{}
\theoremstyle{definition}
    \newtheorem{defn}[thm]{Definition}
        \newtheorem{remark}[thm]{Remark}
    \newtheorem{exam}[thm]{Example}
\theoremstyle{remark}
\title{}
\author{}
\date{}
\begin{document}
\title{Generalized Rota-Baxter systems}

\author{Apurba Das}
\address{Department of Mathematics and Statistics,
Indian Institute of Technology, Kanpur 208016, Uttar Pradesh, India.}
\email{apurbadas348@gmail.com}

%\curraddr{}
%\email{}

\subjclass[2010]{16E40, 16S80, 16W99.}
\keywords{Rota-Baxter system, Cohomology, Deformation, Covariant bialgebra, Averaging system, Homotopy Rota-Baxter system, Quadri-algebra}

\maketitle

\begin{abstract}
Rota-Baxter systems of T. Brzezi\'{n}ski are a generalization of Rota-Baxter operators that are related to dendriform structures, associative Yang-Baxter pairs and covariant bialgebras. In this paper, we consider Rota-Baxter systems in the presence of bimodule, which we call generalized Rota-Baxter systems. We define a graded Lie algebra whose Maurer-Cartan elements are generalized Rota-Baxter systems. This allows us to define a cohomology theory for a generalized Rota-Baxter system. Formal one-parameter deformations of generalized Rota-Baxter systems are discussed from cohomological points of view. We further study Rota-Baxter systems, associative Yang-Baxter pairs, covariant bialgebras and introduce generalized averaging systems that are related to associative dialgebras. Next, we define generalized Rota-Baxter systems in the homotopy context and find relations with homotopy dendriform algebras. The paper ends by considering commuting Rota-Baxter systems and their relation with quadri-algebras.\\
\end{abstract}

\noindent

\thispagestyle{empty}

\tableofcontents

\vspace{0.2cm}

\section{Introduction}
The notion of Rota-Baxter operators was first introduced by Baxter \cite{baxter} in the context of differential operators on commutative Banach algebras and further studied by Rota \cite{rota} in connections with probability and combinatorics. There are important applications of Rota-Baxter operators in the Connes-Kreimer's algebraic approach of the renormalization in quantum field theory \cite{conn}. In \cite{aguiar-pre} Aguiar showed that Rota-Baxter operators produce dendriform structures. Aguiar \cite{aguiar,aguiar-pre-lie} also introduced an associative analogue of Yang-Baxter equation and extensively study infinitesimal bialgebras that were first appeared in a paper by Joni and Rota \cite{joni-rota}. An associative Yang-Baxter solution gives rise to a Rota-Baxter operator and an infinitesimal bialgebra. In \cite{uchino} Uchino introduced a generalization of Rota-Baxter operators that can also be considered as an associative analogue of Poisson structures. See \cite{guo-book} for more details on Rota-Baxter operators. Recently, deformations of associative Rota-Baxter operators and their governing cohomology has been studied in \cite{das2}.

\medskip

In \cite{brz} Brzezi\'{n}ski introduced Rota-Baxter systems as a generalization of a Rota-Baxter operator. In a Rota-Baxter system, two operators are acting on the algebra and satisfying some Rota-Baxter type identities. More precisely, a pair $(R, S)$ of linear maps on an algebra $A$ is said to be a {\bf Rota-Baxter system} if they satisfy
\begin{align*}
R(a) R(b) =~& R ( R(a) b + a S(b)),\\
S(a) S(b) =~& S ( R(a) b + a S(b)),
\end{align*}
for all $a, b \in A$. He also introduced associative Yang-Baxter pairs (as a generalization of Yang-Baxter solutions), covariant bialgebras (as a generalization of infinitesimal bialgebras) and find various relations among them that generalize the results of Aguiar. See \cite{qiu-chen} for the free construction of Rota-Baxter system and related structures.

\medskip

Our main objective in this paper is Brzezi\'{n}ski's Rota-Baxter systems in the presence of $A$-bimodule $M$. Motivated by Uchino's \cite{uchino} terminology of generalized Rota-Baxter operator, we call Rota-Baxter systems in the presence of bimodule as generalized Rota-Baxter systems. They were called $\mathcal{O}$-operator system in \cite{ma-et}. Thus, Rota-Baxter systems on an associative algebra $A$ are generalized Rota-Baxter systems on the adjoint $A$-bimodule $A$. A generalized Rota-Baxter system induces a dendriform structure on $M$, hence an associative structure on $M$. We give some new characterizations of (generalized) Rota-Baxter systems. Motivated from gauge transformations \cite{sev-wein} and reductions \cite{mars-ratiu} of Poisson structures, we define some new constructions of generalized Rota-Baxter systems.

\medskip

Next, given an associative algebra $A$ and an $A$-bimodule $M$, we construct a graded Lie algebra whose Maurer-Cartan elements are given by generalized Rota-Baxter systems. This characterization of a Rota-Baxter system allows us to define a cohomology theory for a generalized Rota-Baxter system. We further show that this cohomology is isomorphic to the Hochschild cohomology of the induced associative algebra $M$ with coefficients in a suitable bimodule structure on the direct sum $A \oplus A$. We also find a morphism from the cohomology of a generalized Rota-Baxter system to the cohomology of the induced dendriform algebra.

\medskip

The classical deformation theory of Gerstenhaber \cite{gers} for associative algebras has been extended to various other algebraic structures including Lie algebras \cite{nij-ric} and Leibniz algebras \cite{balav}. More generally, Balavoine \cite{bala} considered deformations of algebras over binary quadratic operads. Deformations of morphisms have also been considered in many articles \cite{gers-sch,yael}. In \cite{das2} the author study deformations of Rota-Baxter operators from cohomological perspectives. Here we consider deformations of generalized Rota-Baxter systems. We show that the linear term in a formal deformation of a given generalized Rota-Baxter system is a $1$-cocycle in the above-defined cohomology of the generalized Rota-Baxter system. Finally, given a finite order deformation, we construct a $2$-cocycle in the cohomology. The corresponding cohomology class is called the obstruction class. When the obstruction class vanishes, the given deformation extends to next order.

\medskip

We also study some further results on Rota-Baxter systems, associative Yang-Baxter pairs and covariant bialgebras that are introduced by Brzezi\'{n}ski \cite{brz}. We show that a pair $(r,s)$ of skew-symmetric elements of $A^{\otimes 2}$ is an associative Yang-Baxter pair if and only if $(r^\sharp, s^\sharp)$ is a generalized Rota-Baxter system on the coadjoint $A$-bimodule $A^*$. Following the perturbations of quasi-Hopf algebra  \`{a} la Drinfel'd, we study perturbations of the coproduct in a covariant bialgebra. We find a necessary and sufficient condition for the perturbed object to be a new covariant bialgebra. A suitably compatible covariant bialgebra also induces a pre-Lie algebra structure that generalizes a result of Aguiar \cite{aguiar-pre-lie} for infinitesimal bialgebras. Finally, we introduce a notion of generalized averaging system as a generalization of an averaging operator \cite{pei-guo,rey}. A generalized averaging system induces a dialgebra structure in the sense of Loday \cite{loday}. We observed that a generalized averaging system can be seen as a particular case of generalized Rota-Baxter system. Therefore, all cohomological results and deformations of generalized Rota-Baxter systems can be applied to generalized averaging systems.

\medskip

The notion of strongly homotopy associative algebras or $A_\infty$-algebras were introduced by Stasheff in \cite{sta}. We introduce generalized Rota-Baxter systems on a bimodule over an $A_\infty$-algebra. We show that a generalized Rota-Baxter system induces a $Dend_\infty$-algebra structure on the underlying bimodule space. This generalizes a result of Brzezi\'{n}ski in the homotopy context.

\medskip

Finally, we consider Rota-Baxter systems on a dendriform algebra and show how they induce quadri-algebra structures. As a consequence, we obtain a quadri-algebra structure from two commuting Rota-Baxter systems on an associative algebra. The homotopy analogue of these results is also described.

\medskip

The paper is organized as follows. In Section \ref{sec-2}, we introduce generalized Rota-Baxter systems and give some characterizations and new constructions. In Section \ref{sec-3}, we mainly study cohomology of a generalized Rota-Baxter system and find their relationship with the dendriform algebra cohomology. Deformations of generalized Rota-Baxter systems are considered in Section \ref{sec-4}. Applications and some further properties of Rota-Baxter systems, associative Yang-Baxter pairs, covariant bialgebras and generalized averaging systems are given in Section \ref{sec-5}. In Section \ref{sec-6}, we define generalized Rota-Baxter systems in homotopy context and find their relations with $Dend_\infty$-algebras. Finally, in Section \ref{sec-7}, we consider commuting Rota-Baxter systems and obtain quadri-algebras.

\medskip

All vector spaces, linear maps and tensor products are over a field $\mathbb{K}$ of characteristic zero. The elements of the associative algebra $A$ are usually denoted by $a,b,c, \ldots$ and the elements of $M$ are denoted by $u, v, w, \ldots$.

\section{Generalized Rota-Baxter systems}\label{sec-2}

In this section, we introduce generalized Rota-Baxter systems and some of their characterizations. Finally, we define gauge transformations and reductions of generalized Rota-Baxter systems.

Let $(A, \mu)$ be an associative algebra. We denote the multiplication $\mu (a,b)$ simply by $ab$ and denote the algebra simply by $A$. An $A$-bimodule is a vector space $M$ together with a left $A$-action $l: A \otimes M \rightarrow M,~(a,u) \mapsto a \cdot u$ and a right $A$-action $r: M \otimes A \rightarrow M,~(u,a) \mapsto u \cdot a$ satisfying the following compatibilities
\begin{align*}
(ab) \cdot u = a \cdot (b \cdot u), \qquad (a \cdot u) \cdot b = a \cdot (u \cdot b), \qquad (u \cdot a) \cdot b = u \cdot (ab),
\end{align*}
for $a, b \in A,~ u \in M$. An $A$-bimodule is denoted by the triple $(M,l,r)$  or simply by $M$. 

It follows that any associative algebra $A$ is an $A$-bimodule (called adjoint bimodule) in which the left and right actions are given by the algebra multiplication map. Moreover, the dual space $A^*$ also carries an $A$-bimodule structure (called coadjoint bimodule) with left and right actions given by
\begin{align*}
(a \cdot f ) (b) = f (ba )   \quad \text{ and } \quad (f \cdot a) (b) = f (ab), ~ \text{ for } a, b \in A, f \in A^*.
\end{align*}

Let $A$ be an associative algebra and $M$ an $A$-bimodule. 
\begin{defn}
A {\bf generalized Rota-Baxter system} on $M$ over the algebra $A$ consists of two linear maps $R, S : M \rightarrow A$ satisfying the following identities: for all $u,v \in M,$
\begin{align}
R(u) R(v) =~& R ( R(u) \cdot  v + u \cdot S(v)), \label{rb-system1}\\
S(u) S(v) =~& S ( R(u) \cdot v + u \cdot S(v)). \label{rb-system2}
\end{align}
\end{defn}

\begin{exam}
A Rota-Baxter system is a generalized Rota-Baxter system on the adjoint bimodule $A$. 
\end{exam}

\begin{exam}\label{tw-rota}
Let $A$ be an associative algebra, $M$ be an $A$-bimodule and $\sigma : M \rightarrow A$ be an algebra map. A linear map $R: M \rightarrow A$ is said to be a $\sigma$-twisted generalized Rota-Baxter operator if it satisfies
\begin{align*}
R(u) R(v) = R (R(u) \cdot v + u \cdot (\sigma \circ R)(v)),~ \text{ for } u, v \in M.
\end{align*}
Let $A = \mathbb{K}[x]$ be the polynomial ring in one variable and define an algebra map $\sigma$ by $\sigma (x^n) = q^n x^n$ ($q \neq 1$). Then the Jackson integral $J: A \rightarrow A$ given by
\begin{align*}
J(x^n) = \frac{1-q}{1 - q^{n+1}}~ x^{n+1}
\end{align*}
is a $\sigma$-twisted (generalized) Rota-Baxter operator. It is easy to see that if $R$ is a $\sigma$-twisted generalized Rota-Baxter operator then $(R, S = \sigma \circ R)$ is a generalized Rota-Baxter system.
\end{exam}

Some other examples of (generalized) Rota-Baxter systems are given in Subsection \ref{subsec-aybp}.

\medskip

Note that given an associative algebra $A$ and an $A$-bimodule $M$, the sum $A \oplus A \oplus M$ carries an associative algebra structure with product
\begin{align*}
(a_1, a_2, u) ( b_1, b_2, v) = ( a_1 b_1, a_2 b_2, a_1 \cdot  v + u \cdot b_2 ).
\end{align*}
This is exactly the semi-direct product if we consider the direct sum associative algebra structure on $A \oplus A$ and the bimodule structure on $M$ given by $(a_1, a_2) \cdot m = a_1 \cdot m$ and $m \cdot (a_1, a_2) = m \cdot a_2$.

\begin{prop}
A pair $( R, S)$ of maps from $M$ to $A$ is a generalized Rota-Baxter system if and only if the pair of maps
\begin{align*}
\widehat{R}: A \oplus A \oplus M \rightarrow A \oplus A \oplus M, ~(a_1, a_2, u) \mapsto ( R(u), 0, 0) \\
\widehat{S}  : A \oplus A \oplus M \rightarrow A \oplus A \oplus M, ~ (a_1, a_2, u ) \mapsto ( 0, S(u), 0)
\end{align*} 
is a Rota-Baxter system on the algebra $A \oplus A \oplus M$.
\end{prop}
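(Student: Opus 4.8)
The plan is to unwind both definitions and verify the Rota-Baxter system identities for $(\widehat{R}, \widehat{S})$ componentwise, observing that they decouple exactly into the two defining identities \eqref{rb-system1} and \eqref{rb-system2}. First I would fix arbitrary elements $X = (a_1, a_2, u)$ and $Y = (b_1, b_2, v)$ of $A \oplus A \oplus M$ and record the two auxiliary products that appear on the right-hand side of a Rota-Baxter system. Using the given multiplication together with the formulas for $\widehat{R}$ and $\widehat{S}$, one computes
\begin{align*}
\widehat{R}(X)\, Y = (R(u) b_1,\ 0,\ R(u) \cdot v), \qquad X\, \widehat{S}(Y) = (0,\ a_2 S(v),\ u \cdot S(v)),
\end{align*}
so that
\begin{align*}
\widehat{R}(X) Y + X \widehat{S}(Y) = \big( R(u) b_1,\ a_2 S(v),\ R(u) \cdot v + u \cdot S(v) \big).
\end{align*}
The crucial feature is that the combination $R(u) \cdot v + u \cdot S(v)$ governing a generalized Rota-Baxter system emerges precisely as the $M$-component of this sum.

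Next I would compute the two sides of each Rota-Baxter identity. Since $\widehat{R}$ reads off only the $M$-component and lands in the first copy of $A$, applying it to the sum above gives $\widehat{R}(\widehat{R}(X) Y + X \widehat{S}(Y)) = (R(R(u)\cdot v + u \cdot S(v)),\, 0,\, 0)$, while a direct computation in the algebra yields $\widehat{R}(X)\widehat{R}(Y) = (R(u) R(v),\, 0,\, 0)$. Comparing the two triples, the first Rota-Baxter identity for $(\widehat{R}, \widehat{S})$ holds if and only if $R(u) R(v) = R(R(u) \cdot v + u \cdot S(v))$, which is exactly \eqref{rb-system1}. An identical computation with $\widehat{S}$ in place of $\widehat{R}$ --- now landing in the second copy of $A$ --- reduces the second Rota-Baxter identity to $S(u) S(v) = S(R(u) \cdot v + u \cdot S(v))$, that is, \eqref{rb-system2}.

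Because each identity reduces to an equality of triples whose only nontrivial entry is a single copy of $A$, both directions of the equivalence follow simultaneously: $(\widehat{R}, \widehat{S})$ is a Rota-Baxter system on $A \oplus A \oplus M$ precisely when both \eqref{rb-system1} and \eqref{rb-system2} hold, which is the definition of a generalized Rota-Baxter system for $(R, S)$. I expect no genuine obstacle beyond careful bookkeeping; the only point demanding attention is the $M$-component, where the bimodule actions enter and where one must check that after substituting the images of $\widehat{R}$ and $\widehat{S}$ it is the cross terms $R(u) \cdot v$ and $u \cdot S(v)$ that survive, rather than $a_1 \cdot v$ or $u \cdot b_2$.
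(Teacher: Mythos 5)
Your proposal is correct and follows essentially the same route as the paper: compute $\widehat{R}(X)\widehat{R}(Y)$, $\widehat{S}(X)\widehat{S}(Y)$, and $\widehat{R}(X)Y + X\widehat{S}(Y)$ in the semi-direct product, observe that each Rota-Baxter identity for $(\widehat{R},\widehat{S})$ collapses to a single $A$-component equation, and note these are exactly \eqref{rb-system1} and \eqref{rb-system2}. Your write-up is slightly more detailed than the paper's (recording the intermediate products explicitly), but the argument is identical.
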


\begin{proof}
We have $\widehat{R} (a_1, a_2, u)  \widehat{R}(b_1, b_2, v) = ( R(u) R(v), 0,0)$ and
\begin{align*}
\widehat{R} \big(   ( \widehat{R} (a_1, a_2, u) ) (b_1, b_2, v) + (a_1, a_2, u) ( \widehat{S}(b_1, b_2, v)) \big) = \big( R (R(u) v + u S(v))  ,0,0 \big).
\end{align*}
Similarly, $\widehat{S} (a_1, a_2, u) \widehat{S}(b_1, b_2, v) = ( 0, S(u) S(v), 0)$ and 
\begin{align*}
\widehat{S} \big(  (  \widehat{R} (a_1, a_2, u) ) (b_1, b_2, v) + (a_1, a_2, u) ( \widehat{S}(b_1, b_2, v)) \big) = \big( 0,  S (R(u) v + u S(v))  ,0\big).
\end{align*}
Hence $(R,S)$ is a generalized Rota-Baxter system if and only if $(\widehat{R}, \widehat{S})$ is a Rota-Baxter system.
\end{proof}

\begin{prop}\label{rbs-graph}
A pair $(R, S)$ is a generalized Rota-Baxter system if and only if the graph
\begin{align*}
\mathrm{Gr}((R, S)) := \{ (Ru, Su, u) | ~u \in M \} \subset A \oplus A \oplus M
\end{align*}
is a subalgebra.
\end{prop}

\begin{proof}
For any $u, v \in M$, we have
\begin{align*}
(R(u), S(u), u) (R(v), S(v), v) = ( R(u) R(v) , S(u) S(v) , R(u) \cdot v + u \cdot S(v))
\end{align*}
Hence $\mathrm{Gr}((R, S))$ is a subalgebra if and only if $(R, S)$ is a generalized Rota-Baxter system.
\end{proof}

Nijenhuis operators on associative algebras \cite{grab} are associative analog of classical Nijenhuis operator on Lie algebras. More precisely, a linear map $N : A \rightarrow A$ on an algebra $A$ is said to be a Nijenhuis operator if it satisfies
\begin{align*}
N(a) N(b) = N \big( N(a) b + a N(b) - N (ab) \big), \text{ for } a, b \in A.
\end{align*}

The following result relates to generalized Rota-Baxter systems and Nijenhuis operators.

\begin{prop}
A pair $(R,S)$ is a generalized Rota-Baxter system if and only if 
\begin{align*}
N_{(R,S)} = \begin{pmatrix} 
0 & 0 & R \\
0 & 0 & S \\
0 & 0 & 0 
\end{pmatrix} : A \oplus A \oplus M \rightarrow A \oplus A \oplus M
\end{align*}
is a Nijenhuis operator on the algebra $A \oplus A \oplus M$.
\end{prop}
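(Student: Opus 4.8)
The plan is to verify the defining Nijenhuis identity
$N(x)N(y) = N\big(N(x)y + xN(y) - N(xy)\big)$
directly on arbitrary elements $x = (a_1, a_2, u)$ and $y = (b_1, b_2, v)$ of $A \oplus A \oplus M$, using the semi-direct product recalled above, and then to read off that the resulting component equations are precisely \eqref{rb-system1} and \eqref{rb-system2}. The only preliminary I would record is the action of $N := N_{(R,S)}$ on a general element: reading the matrix off, $N(a_1, a_2, u) = (R(u), S(u), 0)$, so $N$ depends only on the module slot of its argument and always lands in $A \oplus A \oplus 0$. In particular $N^2 = 0$, and every value of $N$ has vanishing third component.

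For the left-hand side, since $N(x) = (R(u), S(u), 0)$ and $N(y) = (R(v), S(v), 0)$, the product formula gives $N(x)N(y) = (R(u)R(v),\, S(u)S(v),\, 0)$, the module component vanishing because both factors already lie in $A \oplus A \oplus 0$. For the right-hand side I would compute the three summands separately: $N(x)y$ has module component $R(u)\cdot v$, while $xN(y)$ has module component $u \cdot S(v)$, and $N(xy)$ has module component $0$. Hence the module component of $N(x)y + xN(y) - N(xy)$ is exactly $R(u)\cdot v + u \cdot S(v)$. Applying the outer $N$, which reads off only this module component, yields $\big(R(R(u)\cdot v + u \cdot S(v)),\, S(R(u)\cdot v + u \cdot S(v)),\, 0\big)$.

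Comparing the two sides slot by slot, the third components agree trivially, the first components give \eqref{rb-system1}, and the second components give \eqref{rb-system2}; so the Nijenhuis identity holds for all $x, y$ if and only if $(R,S)$ is a generalized Rota-Baxter system. I do not anticipate a real obstacle, since the computation is routine; the one point deserving care is that, for a general Nijenhuis operator, the correction term $N(xy)$ typically produces extra conditions, so one might worry it contributes here. The observation that dissolves this worry is the block-nilpotent structure of $N$: because $N$ reads only the module slot and always outputs zero there, the term $N(xy)$ is invisible to the outer $N$, and the algebra components $R(u)b_1 + a_1 R(v) - R(a_1 \cdot v + u \cdot b_2)$ and $S(u)b_2 + a_2 S(v) - S(a_1 \cdot v + u \cdot b_2)$ of the argument are likewise discarded. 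This is why the Nijenhuis condition collapses to precisely the two defining identities, with no residual constraints.
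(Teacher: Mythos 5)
Your proposal is correct and follows essentially the same route as the paper: both compute the two sides of the Nijenhuis identity $N(x)N(y) = N\big(N(x)y + xN(y) - N(xy)\big)$ directly in the semi-direct product $A \oplus A \oplus M$, obtaining $(R(u)R(v), S(u)S(v), 0)$ on the left and $\big(R(R(u)\cdot v + u\cdot S(v)), S(R(u)\cdot v + u\cdot S(v)), 0\big)$ on the right, so that the identity holds for all elements if and only if \eqref{rb-system1} and \eqref{rb-system2} hold. Your added remark explaining why the correction term $N(xy)$ and the algebra components of the argument are annihilated by the outer $N$ is a nice clarification of what the paper leaves implicit, but it is the same computation.
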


\begin{proof}
By a simple calculation, we have
\begin{align}\label{nij-1}
N_{(R,S)} (a_1, a_2, u)  N_{(R,S)} (b_1, b_2, v) = ( R(u) R(v), S(u) S(v), 0)
\end{align}
and 
\begin{align}\label{nij-2}
&N_{(R,S)} \bigg( N_{(R,S)} (a_1, a_2, u) (b_1, b_2, v) + (a_1, a_2, u) N_{(R,S)} (b_1, b_2, v) - N_{(R,S)} ((a_1, a_2, u) (b_1, b_2, v))     \bigg) \nonumber \\
&= (R (R(u) \cdot v + u \cdot S(v)) , S (R(u) \cdot v + u \cdot S(v)), 0 ).
\end{align}
It follows from (\ref{nij-1}) and (\ref{nij-2}) that $N_{(R,S)}$ is a Nijenhuis tensor if and only if $(R,S)$ is a generalized Rota-Baxter system.
\end{proof}

Let $A$ be an associative algebra and $M$ an $A$-bimodule. We assume that $\mathrm{dim } A = \mathrm{dim } M$. A pair $(\theta_0, \theta_1)$ of invertible linear maps from $A$ to $M$ is said to be an invertible $1$-cocycle system if they satisfy
\begin{align*}
\theta_0 (xy ) =~& x \cdot \theta_0 (y ) + \theta_0 (x)   \cdot  (\theta_1^{-1} \circ \theta_0 (y)),\\
\theta_1 (xy ) =~& (\theta_0^{-1} \circ \theta_1 (x) ) \cdot \theta_1 (y) + \theta_1(x) \cdot y,
\end{align*}
for all $x, y \in A$. It follows from the above definition that $(\theta, \theta)$ is an invertible $1$-cocycle system if and only if $\theta : A \rightarrow M$ is an invertible $1$-cocycle.

\begin{prop}
Let $A$ be an associative algebra and $M$ an $A$-bimodule ($\mathrm{dim } A = \mathrm{dim } M$). A pair $(R, S)$ of invertible linear maps from $M$ to $A$ is a generalized Rota-Baxter system if and only if $(R^{-1}, S^{-1})$ is an invertible $1$-cocycle system.
\end{prop}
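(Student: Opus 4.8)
The plan is to fix the dictionary $\theta_0 = R^{-1}$ and $\theta_1 = S^{-1}$ (both invertible maps $A \to M$) and to translate each of the two defining identities of a generalized Rota-Baxter system into one of the two defining identities of an invertible $1$-cocycle system, exploiting that $R$ and $S$ are bijections. Since every step will be a reversible change of variables followed by application of an inverse map, the chain of manipulations is a genuine equivalence and the desired ``if and only if'' comes for free.

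First I would treat identity \eqref{rb-system1}. Because $R : M \to A$ is a bijection, this identity holds for all $u, v \in M$ if and only if it holds after the substitution $u = R^{-1}(x)$, $v = R^{-1}(y)$ as $x, y$ range over $A$. Making this substitution and applying $R^{-1} = \theta_0$ to both sides turns \eqref{rb-system1} into
\begin{align*}
\theta_0(xy) = x \cdot \theta_0(y) + \theta_0(x) \cdot S(R^{-1}(y)).
\end{align*}
Since $S = \theta_1^{-1}$ and $R^{-1} = \theta_0$, the last term equals $\theta_0(x) \cdot (\theta_1^{-1} \circ \theta_0)(y)$, which is precisely the first equation in the definition of an invertible $1$-cocycle system.

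Next I would treat \eqref{rb-system2} in the same spirit, but now with the substitution $u = S^{-1}(x)$, $v = S^{-1}(y)$ (valid for all $x, y$ because $S$ is a bijection) and applying $S^{-1} = \theta_1$ to both sides. This rewrites \eqref{rb-system2} as
\begin{align*}
\theta_1(xy) = R(S^{-1}(x)) \cdot S^{-1}(y) + S^{-1}(x) \cdot S(S^{-1}(y)).
\end{align*}
Using $R = \theta_0^{-1}$, $S^{-1} = \theta_1$ and $S \circ S^{-1} = \mathrm{id}$, the right-hand side collapses to $(\theta_0^{-1} \circ \theta_1)(x) \cdot \theta_1(y) + \theta_1(x) \cdot y$, which is exactly the second equation in the definition of an invertible $1$-cocycle system.

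Because each substitution is a bijective change of variables and each application of $R^{-1}$ or $S^{-1}$ is reversible, the pair \eqref{rb-system1}--\eqref{rb-system2} holds if and only if the pair of $1$-cocycle identities holds. I expect no deep difficulty here; the only point needing care — the mild ``obstacle'' — is to use the correct inverse in each line (the substitution driven by $R$ for \eqref{rb-system1} and by $S$ for \eqref{rb-system2}) and to bookkeep the compositions $\theta_1^{-1} \circ \theta_0$ and $\theta_0^{-1} \circ \theta_1$ so that $\theta_0$ and $\theta_1$ never get swapped. Once the dictionary $\theta_0 = R^{-1}$, $\theta_1 = S^{-1}$ is fixed, the verification is purely formal.
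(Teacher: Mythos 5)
Your proof is correct and is essentially identical to the paper's own argument: the paper also performs the change of variables $R(u)=x$, $R(v)=y$ for the first identity and $S(u)=x'$, $S(v)=y'$ for the second, then applies $R^{-1}$ (resp.\ $S^{-1}$) to obtain exactly the two $1$-cocycle-system equations under the dictionary $\theta_0 = R^{-1}$, $\theta_1 = S^{-1}$. Your write-up simply makes explicit the bijectivity and reversibility points that the paper leaves as ``easy to see.''
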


\begin{proof}
It is easy to see that the identity (\ref{rb-system1}) is equivalent to (by taking $R(u) = x, R(v) = y$)
\begin{align*}
R^{-1} (xy) = x \cdot R^{-1} (y) + R^{-1}(x) \cdot ( (S^{-1})^{-1} \circ R^{-1} (y)).
\end{align*}
Similarly, the identity (\ref{rb-system2})
 is equivalent to (by taking $S(u) = x', s(v) = y'$)
 \begin{align*}
 S^{-1} (x' y') = ((R^{-1})^{-1} \circ S^{-1} (x')) \cdot S^{-1} (y') + S^{-1} (x') \cdot y'.
 \end{align*}
 It follows that $(R, S)$ is a generalized Rota-Baxter system if and only if $(R^{-1}, S^{-1})$ is an invertible $1$-cocycle system.
 \end{proof}
 
 \medskip

\begin{defn} \cite{loday}
A {\bf dendriform algebra} is a triple $(D, \prec, \succ)$ consisting of a vector space $D$ together with bilinear operations $\prec, \succ : D \otimes D \rightarrow D$ satisfying
\begin{align*}
 (a \prec b) \prec c =~& a \prec (b \prec c + b \succ c),\\
 (a \succ b) \prec c =~&  a \succ (b \prec c),\\
 (a \prec b + a \succ b) \succ c =~& a \succ (b \succ c), ~~~ \text{ for all } a, b , c \in D.
\end{align*} 
\end{defn}

It follows that in a dendriform algebra $(D, \prec, \succ)$, the sum operation 
\begin{align*}
a * b =  a \prec b + a \succ b ~~\text{ is associative.}
\end{align*}

Let $(D, \prec, \succ)$ and $(D', \prec', \succ')$ be two dendriform algebras. A morphism between them is given by a linear map $f : D \rightarrow D'$ satisfying $f (a \prec b ) = f (a) \prec' f(b)$ and $f (a \succ b ) = f (a) \succ' f(b)$, for $a, b \in D$.

The following result relates to generalized Rota-Baxter systems and dendriform algebras \cite{brz}.

\begin{prop}\label{RBS-dend}
Let $(R,S)$ be a generalized Rota-Baxter system on $M$ over the algebra $A$. Then $M$ carries a dendriform algebra structure with 
\begin{align*}
u \prec v = u \cdot S(v) ~~~~ \text{ and } ~~~~ u \succ v = R(u) \cdot v, ~ \text{ for } u,v \in M.
\end{align*}
Consequently, there is an associative algebra structure on $M$ given by $u * v = u \prec v + u \succ v$ for which both $R$ and $S$ are morphisms of associative algebras.
\end{prop}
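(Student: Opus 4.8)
The plan is to verify the three dendriform axioms directly by substituting the definitions $u \prec v = u \cdot S(v)$ and $u \succ v = R(u) \cdot v$, and then to translate each axiom into a statement about the bimodule actions that follows from the defining identities \eqref{rb-system1} and \eqref{rb-system2} together with the bimodule compatibilities. First I would compute both sides of the left axiom $(u \prec v) \prec w = u \prec (v \prec w + v \succ w)$. The left-hand side becomes $(u \cdot S(v)) \cdot S(w)$, which by bimodule associativity equals $u \cdot (S(v) S(w))$. The right-hand side is $u \cdot S(v \prec w + v \succ w) = u \cdot S(v \cdot S(w) + R(v) \cdot w)$, and here I would invoke \eqref{rb-system2} in the form $S(v)S(w) = S(R(v)\cdot w + v \cdot S(w))$ to match the two sides exactly.

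Next I would treat the right axiom $(u \prec v + u \succ v) \succ w = u \succ (v \succ w)$. The left-hand side is $R(u \cdot S(v) + R(u) \cdot v) \cdot w$, and by \eqref{rb-system1} the argument $R(R(u)\cdot v + u \cdot S(v))$ equals $R(u)R(v)$, so the expression becomes $(R(u)R(v)) \cdot w$; using the associativity of the algebra multiplication inside the left action, $(R(u)R(v))\cdot w = R(u) \cdot (R(v) \cdot w)$, which is precisely $u \succ (v \succ w)$. The middle axiom $(u \succ v) \prec w = u \succ (v \prec w)$ is the purely bimodule-theoretic identity $(R(u)\cdot v) \cdot S(w) = R(u) \cdot (v \cdot S(w))$, which follows immediately from the mixed-associativity compatibility $(a \cdot m) \cdot b = a \cdot (m \cdot b)$ and requires neither \eqref{rb-system1} nor \eqref{rb-system2}.

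For the final assertion, once the dendriform structure is established the associativity of $u * v = u \prec v + u \succ v$ is automatic, as noted in the excerpt. It then remains to check that $R$ and $S$ are algebra morphisms from $(M, *)$ to $A$. Computing $R(u * v) = R(R(u)\cdot v + u \cdot S(v))$, the right-hand side of \eqref{rb-system1} identifies this with $R(u)R(v)$, so $R$ is multiplicative; symmetrically, \eqref{rb-system2} gives $S(u * v) = S(R(u)\cdot v + u \cdot S(v)) = S(u)S(v)$, so $S$ is multiplicative as well.

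I expect no genuine obstacle here: the proposition is essentially a reorganization of the two defining identities into the dendriform axioms, and the only care needed is bookkeeping to make sure each use of bimodule associativity (the three compatibility relations $(ab)\cdot u = a\cdot(b\cdot u)$, $(a\cdot u)\cdot b = a\cdot(u\cdot b)$, $(u\cdot a)\cdot b = u\cdot(ab)$) is applied in the correct position. The mildest subtlety is that the left and right dendriform axioms each consume exactly one of \eqref{rb-system2} and \eqref{rb-system1} respectively, while the middle axiom and the morphism claims for $*$ are structural consequences; keeping track of which Rota-Baxter identity feeds which axiom is the whole content of the verification.
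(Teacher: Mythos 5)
Your proof is correct, and it is exactly the routine verification that the paper itself omits: the paper states Proposition \ref{RBS-dend} with only a citation to Brzezi\'{n}ski's work rather than a written proof. Your bookkeeping is also accurate --- the first dendriform axiom consumes \eqref{rb-system2}, the third consumes \eqref{rb-system1}, the middle axiom and the associativity of $*$ need only the bimodule compatibilities, and the morphism statements for $R$ and $S$ are direct rereadings of \eqref{rb-system1} and \eqref{rb-system2} --- so there is nothing to add.
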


Next, we define morphisms between generalized Rota-Baxter systems. Let $(R, S)$ be a generalized Rota-Baxter system on an $A$-bimodule $M$ and $(R', S')$ be a generalized Rota-Baxter system on an $A'$-bimodule $M'$.

\begin{defn}\label{grbs-mor}
A morphism of generalized Rota-Baxter systems from $(R, S)$ to $(R', S')$ consists of a triple $(\phi, \varphi, \psi)$ of  algebra maps $\phi, \varphi : A \rightarrow A'$ and a linear map $\psi : M \rightarrow M'$ satisfying
\begin{itemize}
\item[(i)] $R' \circ \psi = \phi \circ R$ ~~~ and ~~~ $S' \circ \psi = \varphi \circ S$,
\item[(ii)] $ \psi (a \cdot u)  = \phi (a)\cdot \psi (u) $ ~~~ and ~~~ $\psi (u \cdot a) = \psi (u) \cdot \varphi(a)$.
\end{itemize}
It is called an isomorphism if $\phi, \varphi$ and $\psi$ are all linear isomorphisms.
\end{defn} 

\begin{prop}
Let $(\phi, \varphi, \psi)$ be a morphism of generalized Rota-Baxter systems from $(R, S)$ to $(R', S')$ as in the above definition. Then $\psi : M \rightarrow M'$ is a morphism between induced dendriform structures.
\end{prop}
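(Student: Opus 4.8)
The plan is to verify directly the two defining equations of a dendriform-algebra morphism, namely $\psi(u \prec v) = \psi(u) \prec' \psi(v)$ and $\psi(u \succ v) = \psi(u) \succ' \psi(v)$ for all $u, v \in M$, using the explicit formulas for the induced operations from Proposition \ref{RBS-dend} together with the two morphism conditions (i) and (ii) of Definition \ref{grbs-mor}. Since the dendriform operations on $M$ are $u \prec v = u \cdot S(v)$ and $u \succ v = R(u) \cdot v$, and analogously on $M'$, each check reduces to a single substitution; there is essentially no hidden difficulty, so the work is purely bookkeeping about which compatibility applies to which operation.

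For the $\prec$ operation, I would start from $\psi(u \prec v) = \psi(u \cdot S(v))$ and apply the right-action part of condition (ii), namely $\psi(u \cdot a) = \psi(u) \cdot \varphi(a)$ with $a = S(v)$, to obtain $\psi(u) \cdot \varphi(S(v))$. Then the second half of condition (i), $S' \circ \psi = \varphi \circ S$, rewrites $\varphi(S(v)) = S'(\psi(v))$, yielding $\psi(u) \cdot S'(\psi(v)) = \psi(u) \prec' \psi(v)$, as required. The key observation here is that the $\prec$ operation is built from the right action and the map $S$, so precisely the right-action compatibility and the $S$-intertwining condition are the relevant ones.

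For the $\succ$ operation, the same strategy applies with the left action and $R$ in place of the right action and $S$. Starting from $\psi(u \succ v) = \psi(R(u) \cdot v)$, the left-action part of condition (ii), $\psi(b \cdot u) = \phi(b) \cdot \psi(u)$ with $b = R(u)$, gives $\phi(R(u)) \cdot \psi(v)$, and the first half of condition (i), $R' \circ \psi = \phi \circ R$, converts $\phi(R(u))$ into $R'(\psi(u))$. This produces $R'(\psi(u)) \cdot \psi(v) = \psi(u) \succ' \psi(v)$, completing the verification.

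The only point requiring any care is making sure the two actions of condition (ii) are paired correctly with the two intertwining relations of condition (i): the right action goes with $\varphi$ and $S$, while the left action goes with $\phi$ and $R$. Once this pairing is respected, both identities follow immediately, so I would expect the proof to be just these two short computations with no genuine obstacle.
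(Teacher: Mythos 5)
Your proposal is correct and follows exactly the same route as the paper's proof: each dendriform identity is verified by one application of the matching half of condition (ii) (right action with $\varphi$, left action with $\phi$) followed by the corresponding intertwining relation from condition (i). There is nothing to add or correct.
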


\begin{proof}
For any $u,v \in M$, we have
\begin{align*}
\psi ( u \prec v) = \psi ( u \cdot S(v)) = \psi (u) \cdot  \varphi S(v) = \psi (u)  \cdot S' ( \psi(v)) = \psi (u) \prec' \psi (v).
\end{align*}
Similarly,
\begin{align*}
\psi ( u \succ v) = \psi ( R(u) \cdot v) = \phi R(u)  \cdot \psi (v) = R' ( \psi(u)) \cdot \psi (v) = \psi (u) \succ' \psi (v).
\end{align*}
Hence the proof.
\end{proof}

\begin{defn}
A {\bf (left) pre-Lie algebra} is a vector space $L$ together with a bilinear operation $\diamond : L \otimes L \rightarrow L$ satisfying
\begin{align*}
( a \diamond b ) \diamond c - a \diamond ( b \diamond c ) = ( b \diamond a ) \diamond c - b \diamond ( a \diamond c ), \text{ for } a, b, c \in L. 
\end{align*}
\end{defn}

The following result is well-known \cite{aguiar-pre-lie,aguiar-pre}.
\begin{prop}
(i) Let $(L, \diamond)$ be a pre-Lie algebra. Then the vector space $L$ is equipped with the bracket $[a,b] = a \diamond b - b \diamond a$ is a Lie algebra, called the subadjacent Lie algebra.

(ii) Let $(D, \prec, \succ)$ be a dendriform algebra. Then $(D, \diamond)$ is a pre-Lie algebra, where $a \diamond b = a \succ b - a \prec b$, for $a, b \in D$.
\end{prop}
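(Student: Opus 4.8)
The plan is to treat the two parts separately, since (i) is a purely Lie-theoretic cancellation while (ii) is a direct expansion using the dendriform axioms. For (i), antisymmetry of $[a,b] = a \diamond b - b \diamond a$ is immediate from the definition, so the only content is the Jacobi identity. I would introduce the associator $\mathrm{as}(a,b,c) := (a \diamond b) \diamond c - a \diamond (b \diamond c)$ and observe that the defining pre-Lie identity is precisely the statement that $\mathrm{as}$ is symmetric in its first two slots, namely $\mathrm{as}(a,b,c) = \mathrm{as}(b,a,c)$. The Jacobi identity should then drop out formally from this single symmetry.

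Concretely, I would expand $[[a,b],c] = (a\diamond b)\diamond c - (b\diamond a)\diamond c - c\diamond(a\diamond b) + c\diamond(b\diamond a)$ and likewise for the two cyclic permutations, then regroup the resulting twelve terms into six associators. A short bookkeeping shows that the cyclic sum equals
\[
\mathrm{as}(a,b,c) + \mathrm{as}(b,c,a) + \mathrm{as}(c,a,b) - \mathrm{as}(b,a,c) - \mathrm{as}(c,b,a) - \mathrm{as}(a,c,b).
\]
Pairing $\mathrm{as}(a,b,c)$ with $\mathrm{as}(b,a,c)$, $\mathrm{as}(b,c,a)$ with $\mathrm{as}(c,b,a)$, and $\mathrm{as}(c,a,b)$ with $\mathrm{as}(a,c,b)$, each pair cancels by the left-symmetry of the associator, so the whole expression vanishes. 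This settles (i) with no computation beyond term-matching.

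For (ii), the plan is a brute-force expansion followed by an application of the three dendriform axioms. I would write out both $(a\diamond b)\diamond c$ and $a\diamond(b\diamond c)$ using $x \diamond y = x \succ y - x \prec y$, obtaining in each case four products of the form $(x \ast y) \ast z$ and $x \ast (y \ast z)$ with $\ast \in \{\prec, \succ\}$. Then I would collapse the left-associated products using the axioms $(a\succ b)\prec c = a\succ(b\prec c)$, $(a\prec b)\prec c = a\prec(b\prec c) + a\prec(b\succ c)$, and $(a\prec b)\succ c + (a\succ b)\succ c = a\succ(b\succ c)$, reducing the associator of $\diamond$ to a short combination of right-associated terms. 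The goal is to arrive at an expression that is manifestly invariant under interchanging $a$ and $b$, which is exactly the required pre-Lie identity.

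The main obstacle I anticipate is entirely in part (ii): keeping the sign bookkeeping straight across the eight expanded terms and, more importantly, choosing which axiom to apply to each left-associated product so that the non-symmetric contributions actually cancel and the $a \leftrightarrow b$ symmetry becomes visible. In carrying this out I would pay close attention to the exact sign convention in $a \diamond b = a \succ b - a \prec b$, since the cancellation that exposes the symmetry is sensitive to it; the terms that refuse to symmetrize under one sign convention are precisely the ones the dendriform axioms are designed to control, so verifying that they combine correctly is the crux of the argument.
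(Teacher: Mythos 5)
Your part (i) is correct and complete: antisymmetry is immediate, the twelve terms of the cyclic sum regroup exactly into the six associators you list, and the three pairs cancel by the left-symmetry $\mathrm{as}(a,b,c)=\mathrm{as}(b,a,c)$, which is precisely the pre-Lie identity. Since the paper prints no proof of this proposition (it is cited as well-known from Aguiar), this standard argument is all that is needed.

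Part (ii) is where the proposal breaks, and not merely as a bookkeeping issue. Carry out your own expansion with the stated operation $a \diamond b = a \succ b - a \prec b$: of the eight terms, the pair $-(a \succ b)\prec c + a \succ (b \prec c)$ cancels by the middle dendriform axiom, and the remaining six collapse via the other two axioms to
\[
(a\diamond b)\diamond c - a \diamond (b \diamond c) \;=\; 2\bigl[\, a \prec (b \succ c) - (a \prec b) \succ c \,\bigr].
\]
No dendriform axiom makes this expression symmetric in $a$ and $b$, and in fact it is not: take $D = \mathbb{K}[x]$ with the dendriform structure $f \prec g = f R(g)$, $f \succ g = R(f) g$ coming from the integration operator $R(x^n) = x^{n+1}/(n+1)$ (a weight-zero Rota-Baxter operator). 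At $(a,b,c) = (x,1,1)$ the right-hand side equals $2\bigl[ x R(x) - R(x^2) \bigr] = x^3/3$, while at $(1,x,1)$ it equals $2\bigl[ R(R(x)) - R(R(x)) \bigr] = 0$, so the left-symmetry fails. Hence the proposition is false under the literal sign convention, and no choice of which axiom to apply to which term will repair it. The operation actually proved pre-Lie in the cited work of Aguiar is $a \diamond b = a \succ b - b \prec a$. With that convention your strategy goes through verbatim: the same three axioms reduce the associator to
\[
-\bigl[(a \prec b) + (b \prec a)\bigr] \succ c \;-\; c \prec \bigl[(a \succ b) + (b \succ a)\bigr] \;+\; a \succ (c \prec b) \;+\; b \succ (c \prec a),
\]
which is manifestly invariant under $a \leftrightarrow b$. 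You should therefore prove the corrected statement; note also that the paper's subsequent remark then gives the pre-Lie structure $u \diamond v = R(u)\cdot v - v \cdot S(u)$ on $M$, rather than $R(u)\cdot v - u \cdot S(v)$.
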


Thus, it follows that if $(R,S)$ is a generalized Rota-Baxter system on $M$ over the algebra $A$, then there is a pre-Lie structure on $M$ given by $u \diamond v = R(u) \cdot v - u \cdot S(v)$, for $u,v\in M$.

\medskip

\subsection{Gauge transformations} Gauge transformations of Poisson structures by suitable closed $2$-forms was introduced by \v{S}evera and Weinstein \cite{sev-wein}. Motivated from the fact that generalized Rota-Baxter operators (more generally, generalized Rota-Baxter systems) are the associative analogue of Poisson structures, we define here gauge transformations of generalized Rota-Baxter systems.

Let $L \subset A \oplus A \oplus M$ be a subalgebra of the semi-direct product. For any linear map $B : A \oplus A \rightarrow M$, we define a subspace
\begin{align*}
\tau_B (L ) := \{ ( a_1, a_2 , u + B (a_1, a_2 ) ) |~ (a_1, a_2, u ) \in L \}.
\end{align*} 

Then we have the following.

\begin{prop}\label{gauge-prop}
The subspace $\tau_B (L) \subset A \oplus A \oplus M$ is a subalgebra of the semi-direct product if and only if $B$ is a $1$-cocycle in the Hochschild cohomology of the algebra $A \oplus A$ with coefficient in $M$. 
\end{prop}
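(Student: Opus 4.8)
The plan is to compute directly. First I would take two typical elements of $\tau_B(L)$, namely $(a_1,a_2,u+B(a_1,a_2))$ and $(b_1,b_2,v+B(b_1,b_2))$ arising from $(a_1,a_2,u),(b_1,b_2,v)\in L$, and multiply them in the semi-direct product. Recalling that the induced $A\oplus A$-bimodule structure on $M$ is $(a_1,a_2)\cdot m=a_1\cdot m$ and $m\cdot(b_1,b_2)=m\cdot b_2$, the $A\oplus A$-part of the product is $(a_1b_1,a_2b_2)$ and its $M$-part is
\[
a_1\cdot\big(v+B(b_1,b_2)\big)+\big(u+B(a_1,a_2)\big)\cdot b_2 = (a_1\cdot v + u\cdot b_2) + \big(a_1\cdot B(b_1,b_2)+B(a_1,a_2)\cdot b_2\big).
\]
From here I would read off the condition for this product to lie back in $\tau_B(L)$.

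The key observation is that, since $L$ is a subalgebra, the product $(a_1,a_2,u)(b_1,b_2,v)=(a_1b_1,a_2b_2,a_1\cdot v+u\cdot b_2)$ already lies in $L$; call it $XY$. The element of $\tau_B(L)$ determined by $XY$ has $M$-part $(a_1\cdot v+u\cdot b_2)+B(a_1b_1,a_2b_2)$. Comparing this with the $M$-part computed above, the product of the two $\tau_B(L)$ elements equals the $\tau_B$-image of $XY$ precisely when
\[
a_1\cdot B(b_1,b_2)+B(a_1,a_2)\cdot b_2 = B(a_1b_1,a_2b_2).
\]
I would then recognize the difference of the two sides as exactly the Hochschild coboundary
\[
(\delta B)\big((a_1,a_2),(b_1,b_2)\big)=(a_1,a_2)\cdot B(b_1,b_2)-B\big((a_1,a_2)(b_1,b_2)\big)+B(a_1,a_2)\cdot(b_1,b_2)
\]
of $B$ viewed as a $1$-cochain on $A\oplus A$ with values in $M$; under the module structure above this reproduces the displayed defect. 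Thus the closure defect of $\tau_B(L)$ is precisely $\delta B$.

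For the \emph{if} direction this finishes things cleanly: when $\delta B=0$ the map $\Phi\colon A\oplus A\oplus M\to A\oplus A\oplus M$, $(a_1,a_2,u)\mapsto(a_1,a_2,u+B(a_1,a_2))$, is in fact an algebra automorphism of the semi-direct product (the computation above shows $\Phi(X)\Phi(Y)=\Phi(XY)$), so $\tau_B(L)=\Phi(L)$ is a subalgebra for every subalgebra $L$. The delicate direction, which I expect to be the main obstacle, is the converse: closure of $\tau_B(L)$ only tells us a priori that $(0,0,\delta B((a_1,a_2),(b_1,b_2)))\in L$ for all $X,Y\in L$, i.e.\ that the defect is \emph{absorbed} by $L$, not that it vanishes. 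To upgrade this to the genuine cocycle identity I would use that the defect lands in the $M$-summand while being independent of the module components $u,v$; for the subalgebras of interest here, namely the graphs $\mathrm{Gr}((R,S))$ of generalized Rota-Baxter systems, one has $L\cap(\{0\}\oplus\{0\}\oplus M)=0$, which forces $\delta B((a_1,a_2),(b_1,b_2))=0$ on the $A\oplus A$-projection of $L$ (equivalently, testing closure against $L=A\oplus A\oplus\{0\}$, whose projection is all of $A\oplus A$, yields the pointwise cocycle identity everywhere). Making this transversality explicit is the step where care is needed to complete the equivalence.
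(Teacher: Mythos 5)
Your computation coincides with the paper's proof: the paper multiplies two elements of $\tau_B(L)$, obtains exactly the $M$-component $a_1\cdot v+u\cdot b_2+a_1\cdot B(b_1,b_2)+B(a_1,a_2)\cdot b_2$, and from this declares that closure is equivalent to $a_1\cdot B(b_1,b_2)+B(a_1,a_2)\cdot b_2=B(a_1b_1,a_2b_2)$, i.e.\ to the cocycle identity; nothing more is said. Your \emph{if} direction, packaged as the statement that $\Phi(a_1,a_2,u)=(a_1,a_2,u+B(a_1,a_2))$ is an algebra automorphism of the semi-direct product when $\delta B=0$, is correct and is a cleaner organization of the same computation.

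Where you go beyond the paper is the converse, and there your instinct is right but your repair fails. You correctly note that closure of $\tau_B(L)$ only forces $(0,0,\delta B((a_1,a_2),(b_1,b_2)))\in L$, i.e.\ the defect may be absorbed by $L$; the paper's proof silently identifies membership of the product in $\tau_B(L)$ with equality of the $M$-components, and moreover obtains the cocycle identity only on pairs coming from $L$, so it elides precisely the point you raise. But your claim that graphs satisfy $\mathrm{Gr}((R,S))\cap(\{0\}\oplus\{0\}\oplus M)=0$ is false in general: that intersection equals $\{(0,0,m):m\in\ker R\cap \ker S\}$. Concretely, $(R,S)=(0,0)$ is a generalized Rota-Baxter system whose graph is $\{0\}\oplus\{0\}\oplus M$, and then $\tau_B(\mathrm{Gr}((0,0)))=\mathrm{Gr}((0,0))$ is a subalgebra for \emph{every} linear $B$, cocycle or not; so the fixed-$L$ \emph{only if} statement genuinely fails, even for graphs. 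Your parenthetical ``equivalently'' also conflates two different arguments: trivial intersection for a graph would give $\delta B=0$ only on pairs from the image of $(R,S):M\to A\oplus A$, whereas taking $L=A\oplus A\oplus\{0\}$ gives the identity on all of $A\oplus A$ but for a different subalgebra than the one fixed in the statement. The defensible versions are: the \emph{if} direction (the only one the paper uses later, for gauge transformations of $\mathrm{Gr}((R,S))$) holds for every $L$; the \emph{only if} direction holds either with the quantifier ``for all subalgebras $L$'' (your $A\oplus A\oplus\{0\}$ test), or for a fixed $L$ satisfying both $L\cap(\{0\}\oplus\{0\}\oplus M)=0$ and surjectivity of the projection $L\to A\oplus A$. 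Neither your proposal nor the paper's own proof establishes the literal fixed-$L$ equivalence as stated; the difference is that you noticed something was missing.
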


\begin{proof}
For any $(a_1, a_2, u), (b_1, b_2, v) \in L$, we have
\begin{align*}
&(a_1, a_2, v + B (a_1, a_2)) (b_1, b_2, n + B (b_1, b_2)) \\
&= (a_1 b_1, a_2 b_2, a_1 \cdot v + a_1 \cdot B (b_1, b_2) + u \cdot b_2 + B(a_1, a_2) \cdot b_2 ).
\end{align*}
This shows that $\tau_B (L)$ is a subalgebra if and only if $a_1 \cdot B (b_1, b_2) + B(a_1, a_2 ) \cdot b_2 = B (a_1 b_1, a_2 b_2),$ or equivalently, 
\begin{align*}
(a_1, a_2) \cdot B (b_1, b_2) - B ( (a_1, a_2) (b_1, b_2)) + B (a_1, a_2) \cdot (b_1, b_2) = 0.
\end{align*}
In other words, $B$ is a $1$-cocycle.
\end{proof}

Next, let $(R,S)$ be a generalized Rota-Baxter system on $M$ over the algebra $A$. Consider the subalgebra $\mathrm{Gr}((R,S)) = \{ ( Ru, Su, u) | u \in M \} \subset A \oplus A \oplus M$. For any $1$-cocycle $B : A \oplus A \rightarrow M$, by Proposition \ref{gauge-prop},
\begin{align*}
\tau_B (\mathrm{Gr}(R,S)) = \{ ( Ru, Su , u + B (Ru, Su)) | u\in M \} \subset A \oplus A \oplus M
\end{align*}
is a subalgebra. The subalgebra $\tau_B (\mathrm{Gr}(R,S))$ may not be the graph of a pair of some maps $(R, S')$. However, if the linear map $\mathrm{id} + B \circ (R, S) : M \rightarrow M$ is invertible, then $\tau_B (\mathrm{Gr}(R,S))$ is the graph of the pair of linear maps 
\begin{align*}
\big( R \circ (\mathrm{id} + B \circ (R, S))^{-1}, S \circ (\mathrm{id} + B \circ (R, S))^{-1}  \big) : M \rightarrow A \oplus A.
\end{align*}
In such a case, we call $B$ a $(R, S)$-admissible $1$-cocycle. Therefore, in this case, by Proposition \ref{rbs-graph}, the pair of linear maps $\big( R \circ (\mathrm{id} + B \circ (R, S))^{-1}, S \circ (\mathrm{id} + B \circ (R, S))^{-1}  \big)$ is a generalized Rota-Baxter system on $M$ over the algebra $A$. This generalized Rota-Baxter system is called the gauge transformation of $(R,S) $ associated with the $(R,S)$-admissible $1$-cocycle $B$.

\begin{remark}
If $R, S$ are both invertible, then $R_B, S_B$ are so. In fact, we have $R_B^{-1} (a) = (\mathrm{id} + B \circ (R, S)) (R^{-1} (a)) = R^{-1} (a) + B (a, SR^{-1} (a))$ and $S_B^{-1} (a) =  (\mathrm{id} + B \circ (R, S)) (S^{-1}(a)) = S^{-1} (a) + B (RS^{-1}(a), a)$.
\end{remark}

\begin{prop}
Let $(R,S)$ be a generalized Rota-Baxter system on $M$ over the algebra $A$ and $(R_B, S_B)$ be the gauge transformation associated with the $(R,S)$-admissible $1$-form $B$. Then associative structures on $M$ induced from generalized Rota-Baxter systems $(R, S)$ and $(R_B, S_B)$ are isomorphic.
\end{prop}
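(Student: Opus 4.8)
The plan is to show that the map $\Phi := \mathrm{id} + B \circ (R,S) : M \to M$ is itself the desired isomorphism. Recall that $\Phi$ is invertible by the assumption that $B$ is $(R,S)$-admissible, that $R_B = R \circ \Phi^{-1}$ and $S_B = S \circ \Phi^{-1}$, and that by Proposition \ref{RBS-dend} the two induced associative products are $u * v = R(u)\cdot v + u\cdot S(v)$ and $u *_B v = R_B(u)\cdot v + u\cdot S_B(v)$. I claim $\Phi : (M, *) \to (M, *_B)$ is an algebra isomorphism.

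I would argue this conceptually through the graph picture. First, the parametrization $\iota : (M,*) \to \mathrm{Gr}((R,S))$, $u \mapsto (Ru, Su, u)$, is an algebra isomorphism onto the subalgebra of Proposition \ref{rbs-graph}: the semidirect-product multiplication gives $(Ru,Su,u)(Rv,Sv,v) = (R(u)R(v), S(u)S(v), R(u)\cdot v + u\cdot S(v))$, and identities (\ref{rb-system1}), (\ref{rb-system2}) rewrite the first two slots as $R(u*v)$ and $S(u*v)$, so the product equals $\iota(u*v)$. The same computation gives an isomorphism $\iota_B : (M, *_B) \to \tau_B(\mathrm{Gr}(R,S)) = \mathrm{Gr}((R_B,S_B))$. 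Next, I would check that $\tau_B : A\oplus A\oplus M \to A\oplus A\oplus M$, $(a_1,a_2,u)\mapsto(a_1,a_2,u+B(a_1,a_2))$, is an algebra automorphism of the full semidirect product, with inverse $\tau_{-B}$; comparing $\tau_B$ of a product with the product of the $\tau_B$-images reduces multiplicativity to exactly the Hochschild $1$-cocycle identity for $B$ established in Proposition \ref{gauge-prop}. Restricting this automorphism to $\mathrm{Gr}((R,S))$ and composing, $\iota_B^{-1} \circ \tau_B \circ \iota$ sends $u \mapsto (Ru,Su,u) \mapsto (Ru,Su,u+B(Ru,Su)) \mapsto \Phi(u)$, where the last step uses $R_B\Phi = R$ and $S_B\Phi = S$; hence the composite is $\Phi$, as claimed.

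The main obstacle is the middle step: recognizing that the $1$-cocycle condition, which in Proposition \ref{gauge-prop} only guaranteed that $\tau_B(L)$ is closed under multiplication, is in fact precisely the identity needed to make $\tau_B$ multiplicative, and in particular a homomorphism rather than merely a subspace-preserving map. Once this is in place everything else is transcription. As a more elementary alternative one may bypass the ambient algebra and verify $\Phi(u*v) = \Phi(u)*_B\Phi(v)$ directly: after expanding both sides using $R_B\Phi = R$, $S_B\Phi = S$ and cancelling the common terms $R(u)\cdot v + u\cdot S(v)$, the remaining equation $B(R(u)R(v), S(u)S(v)) = R(u)\cdot B(Rv,Sv) + B(Ru,Su)\cdot S(v)$ is again the $1$-cocycle condition for $B$ evaluated on $(Ru,Su)$ and $(Rv,Sv)$ in $A\oplus A$.
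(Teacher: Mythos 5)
Your proof is correct, and your primary argument takes a genuinely different route from the paper's. The paper proves the statement by exactly the direct computation you relegate to your final paragraph: it takes $\Phi = \mathrm{id} + B\circ(R,S)$ and checks $\Phi(u) *_B \Phi(v) = \Phi(u*v)$, expanding the left side via $R_B\circ\Phi = R$ and $S_B\circ\Phi = S$, rewriting the correction terms $R(u)\cdot B(Rv,Sv) + B(Ru,Su)\cdot S(v)$ as $(Ru,Su)\cdot B(Rv,Sv) + B(Ru,Su)\cdot(Rv,Sv)$ using the $A\oplus A$-actions on $M$, and then invoking the cocycle identity together with $R(u)R(v)=R(u*v)$, $S(u)S(v)=S(u*v)$ --- precisely your cancellation argument. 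Your main route is instead structural: you upgrade Proposition \ref{gauge-prop} by observing that the $1$-cocycle condition makes $\tau_B$ multiplicative on all of $A\oplus A\oplus M$ (not merely subalgebra-preserving), hence an algebra automorphism with inverse $\tau_{-B}$; you note that the graph parametrizations $\iota:(M,*)\to\mathrm{Gr}((R,S))$ and $\iota_B:(M,*_B)\to\mathrm{Gr}((R_B,S_B))$ are algebra isomorphisms by the generalized Rota-Baxter identities and Proposition \ref{rbs-graph}; and you identify the composite $\iota_B^{-1}\circ\tau_B\circ\iota$ with $\Phi$ via $R_B\circ\Phi = R$, $S_B\circ\Phi = S$. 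I checked each of these steps (in particular that multiplicativity of $\tau_B$ is equivalent to the cocycle identity, and that $(Ru,Su,u+B(Ru,Su))$ indeed lies in $\mathrm{Gr}((R_B,S_B))$ over the base point $\Phi(u)$), and there is no gap. What your factorization buys is conceptual clarity: the gauge transformation becomes literally the conjugation of graph subalgebras by an ambient automorphism, so the isomorphism of induced products is automatic and canonical rather than the outcome of a computation. What the paper's verification buys is brevity and self-containedness, needing neither the automorphism claim nor the identification of $(M,*)$ and $(M,*_B)$ with their graphs. Both arguments burn the same fuel in the end: the cocycle identity evaluated at $(Ru,Su)$ and $(Rv,Sv)$.
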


\begin{proof}
Consider the invertible linear map $(\mathrm{id} + B \circ (R,S)) : M \rightarrow M$. Then we have
\begin{align*}
&( \mathrm{id} + B \circ (R,S) ) (u ) *_B ( \mathrm{id} + B \circ (R,S) ) (v) \\
&= ( \mathrm{id} + B \circ (R,S) ) (u ) \cdot S(n) + R(u) \cdot ( \mathrm{id} + B \circ (R,S))(v) \\
&= u \cdot S(v) + R(u) \cdot  v + R(u) \cdot B (Rv, Sv) + B (Ru, Su) \cdot S(v) \\
&= u \cdot S(v) + R(u) \cdot v + (Ru, Su) \cdot B (Rv, Sv) + B ( Ru, Su) \cdot (Rv, Sv) \\
&= u \cdot S(v) + R(u) \cdot v + B ( (Ru,Su) (Rv,Sv)) ~~~\quad (\text{as } B \text{ is a } 1\text{-cocycle}) \\
&= u * v + (B \circ (R, S))( u * v ) = (\mathrm{id} + B \circ (R,S)) (u * v).
\end{align*}
Hence the proof.
\end{proof}

\subsection{Reductions}
Let $A$ be an associative algebra and $M$ an $A$-bimodule. Let $(R, S)$ be a generalized Rota-Baxter system on $M$ over the algebra $A$. Consider a subalgebra $B \subset A$ and a vector subspace $E \subset A$ with the property that the quotient $B / E \cap B$ is an associative algebra and the projection $\pi : B \rightarrow B / E \cap B$ is an algebra morphism.

Suppose $N \subset M$ is a $B$-bimodule. Define
\begin{align*}
(E \cap B)^0_N = \{ u \in N |~ a \cdot u = u \cdot a = 0, ~ \forall a \in E \cap B \}.
\end{align*}
Then it is easy to see that $(E \cap B)^0_N$ is an $B / E \cap B$-bimodule with $|b| \cdot u = b \cdot u$ and $u  \cdot |b| = u \cdot b$, for $|b| \in B / E \cap B$ and $u \in (E \cap B)^0_N$. Observe that $b \cdot u $ and $u \cdot b$ is in $(E \cap B)^0_N$. This follows as for any $a \in E \cap B$, we have
\begin{align*}
a \cdot (b \cdot u ) = (ab) \cdot u = 0 ~~ \text{ and } ~~ (b \cdot u) \cdot a = b \cdot (u \cdot a ) = 0.
\end{align*}
This shows that $b \cdot u \in (E \cap B)^0_N$.  Similarly, one can show that $u \cdot b \in (E \cap B)^0_N$.

\begin{defn}
Let $(R, S)$ be a generalized Rota-Baxter system on $M$ over the algebra $A$. A triple $(B, E, N)$ as above is said to be reducible if there is a generalized Rota-Baxter system $(\overline{R}, \overline{S}) : (E \cap B)^0_N \rightarrow B / E \cap B$ such that for any $u,v \in (E \cap B)_N^0$, we have 
\begin{align}\label{redu-cond}
\overline{R}(u) \cdot v = R(u) \cdot v, \quad v \cdot \overline{R}(u) = v \cdot R(u), \quad \overline{S}(u) \cdot v = S(u) \cdot v ~~~\text{ and }~~~ v \cdot \overline{S}(u) = v  \cdot S(u).
\end{align}
\end{defn}
The Marsden-Ratiu reduction theorem for generalized Rota-Baxter system is given by the following.

\begin{thm}
Let $(R, S)$ be a generalized Rota-Baxter system on $M$ over the algebra $A$. If $R ((E \cap B)^0_N ) \subset B $ and  $S ((E \cap B)^0_N ) \subset B $ then $(B, E, N)$ is reducible.
\end{thm}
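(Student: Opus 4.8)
The plan is to produce the reduced system explicitly by setting
\[
\overline{R} := \pi \circ R|_{(E \cap B)^0_N} \qquad \text{and} \qquad \overline{S} := \pi \circ S|_{(E \cap B)^0_N}.
\]
These are well-defined linear maps from $(E \cap B)^0_N$ to $B/E \cap B$ precisely because the hypothesis $R((E \cap B)^0_N) \subset B$ and $S((E \cap B)^0_N) \subset B$ guarantees that $R(u), S(u) \in B$, so that $\pi$ may be applied. Having fixed this candidate, two things remain to be verified: the compatibility conditions \eqref{redu-cond} and the two generalized Rota-Baxter identities \eqref{rb-system1}--\eqref{rb-system2} for $(\overline{R}, \overline{S})$.

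The compatibility conditions are essentially definitional. For $u, v \in (E \cap B)^0_N$ we have $\overline{R}(u) = |R(u)|$ in $B/E \cap B$, and the $B/E \cap B$-bimodule structure on $(E \cap B)^0_N$ was defined by $|b| \cdot v = b \cdot v$ and $v \cdot |b| = v \cdot b$. Hence $\overline{R}(u) \cdot v = R(u) \cdot v$ and $v \cdot \overline{R}(u) = v \cdot R(u)$, and likewise for $\overline{S}$; this is exactly \eqref{redu-cond}. I would record here the well-definedness of this action (independence of the chosen representative $b \in B$), which holds because any two representatives differ by an element of $E \cap B$, and such an element annihilates $(E \cap B)^0_N$ on both sides.

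For the Rota-Baxter identities, the key preliminary observation is that for $u, v \in (E \cap B)^0_N$ the element $w := R(u) \cdot v + u \cdot S(v)$ again lies in $(E \cap B)^0_N$. Indeed $R(u), S(v) \in B$, and by the closure property established just before the definition of reducibility, $B \cdot (E \cap B)^0_N$ and $(E \cap B)^0_N \cdot B$ are contained in $(E \cap B)^0_N$; since $(E \cap B)^0_N$ is a linear subspace, the sum $w$ stays inside it. This is the step I expect to be the crux, since it is what allows $\overline{R}$ and $\overline{S}$ to be applied to $w$ at all. Granting this, I would compute
\[
\overline{R}(u)\,\overline{R}(v) = \pi(R(u))\,\pi(R(v)) = \pi\big(R(u) R(v)\big) = \pi\big(R(w)\big) = \overline{R}(w),
\]
where the second equality uses that $\pi$ is an algebra morphism, the third uses \eqref{rb-system1} for $(R,S)$, and the last is the definition of $\overline{R}$ together with $w \in (E \cap B)^0_N$. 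Finally, by the compatibility just proved, $w = \overline{R}(u) \cdot v + u \cdot \overline{S}(v)$ as an element of $(E \cap B)^0_N$, so $\overline{R}(w) = \overline{R}(\overline{R}(u) \cdot v + u \cdot \overline{S}(v))$, which is identity \eqref{rb-system1} for $(\overline{R}, \overline{S})$. The identity \eqref{rb-system2} follows by the identical computation with $\overline{S}$ in place of the outer $\overline{R}$ and \eqref{rb-system2} for $(R,S)$ in place of \eqref{rb-system1}. Thus $(\overline{R}, \overline{S})$ is a generalized Rota-Baxter system on $(E\cap B)^0_N$ over $B/E\cap B$ satisfying \eqref{redu-cond}, and hence $(B, E, N)$ is reducible.
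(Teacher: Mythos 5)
Your proof is correct and takes essentially the same route as the paper's: both define $\overline{R} = \pi \circ R$ and $\overline{S} = \pi \circ S$ on $(E \cap B)^0_N$, use that $\pi$ is an algebra morphism to pass the product into the quotient, and reduce the two identities for $(\overline{R}, \overline{S})$ to the identities \eqref{rb-system1}--\eqref{rb-system2} for $(R,S)$. The only difference is that you spell out what the paper leaves implicit --- the well-definedness of the quotient bimodule action and, crucially, that $R(u)\cdot v + u \cdot S(v)$ stays inside $(E \cap B)^0_N$ so that $\overline{R}$ and $\overline{S}$ can legitimately be applied to it --- which is a welcome clarification rather than a deviation.
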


\begin{proof}
We define $\overline{R}, \overline{S} : (E \cap B)_N^0 \rightarrow B / E \cap B$ by
\begin{align*}
\overline{R} (u) = |R(u)| ~~~ \text{ and } ~~~ \overline{S}(u) = | S(u)|.
\end{align*}
Then we have
\begin{align*}
\overline{R}(u) \overline{R}(v) = | R(u) || R(v)| = | R(u) R(v)|.
\end{align*}
On the other hand,
\begin{align*}
\overline{R} ( \overline{R} (u) \cdot v + u \cdot \overline{S}(v)) =~& \overline{R}  ( R(u) \cdot v + u \cdot S(v)) \\
=~& | R ( R(u) \cdot v + u \cdot S(v)) | = | R(u) R(v)|. 
\end{align*}
Hence $\overline{R}(u) \overline{R}(v) = \overline{R} ( \overline{R} (u) \cdot v + u \cdot \overline{S}(v))$. Similarly, we can show that $\overline{S}(u) \overline{S}(v) = \overline{S} ( \overline{R} (u)\cdot v + u \cdot \overline{S}(v))$. Hence $(\overline{R}, \overline{S})$ is a generalized Rota-Baxter system on $(E \cap B)^0_N$ over the algebra $B / E \cap B$. Moreover, the condition (\ref{redu-cond}) holds. Hence $(B, E, N)$ is reducible.
\end{proof}

As a consequence, we have the following.
\begin{corollary}
Let $(R, S)$ be a generalized Rota-Baxter system on $M$ over the algebra $A$. Let $B \subset A$ be a subalgebra and $N \subset M$ a $B$-bimodule. If $R (N) \subset B$ and $S(N) \subset B$, then the restrictions $(R, S) : N \rightarrow B$ is a generalized Rota-Baxter system on $N$ over $B$.
\end{corollary}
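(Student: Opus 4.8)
The plan is to derive this corollary directly from the preceding Marsden--Ratiu reduction theorem by choosing the subspace $E$ so that the reduction machinery collapses to a plain restriction. Concretely, I would take $E = \{0\} \subset A$. Then $E \cap B = \{0\}$, so the quotient algebra $B / E \cap B$ is canonically identified with $B$ itself, and the projection $\pi : B \rightarrow B / E \cap B$ becomes the identity map, which is trivially an algebra morphism. Likewise the annihilator subspace $(E \cap B)^0_N = \{ u \in N \mid a \cdot u = u \cdot a = 0, \ \forall a \in E \cap B\}$ becomes $\{ u \in N \mid 0 \cdot u = u \cdot 0 = 0 \} = N$, since the only element of $E \cap B$ is $0$. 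Thus with this choice the triple $(B, E, N) = (B, \{0\}, N)$ has $(E \cap B)^0_N = N$ and $B / E \cap B = B$.

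With these identifications in place, the hypotheses of the theorem must be checked against those of the corollary. The theorem requires $R((E \cap B)^0_N) \subset B$ and $S((E \cap B)^0_N) \subset B$, which under the identification $(E \cap B)^0_N = N$ read exactly as $R(N) \subset B$ and $S(N) \subset B$ — precisely the hypotheses of the corollary. Therefore the theorem applies and yields a generalized Rota-Baxter system $(\overline{R}, \overline{S}) : (E \cap B)^0_N \rightarrow B / E \cap B$, which under our identifications is a generalized Rota-Baxter system $N \rightarrow B$. Finally, unwinding the definition $\overline{R}(u) = |R(u)|$ and $\overline{S}(u) = |S(u)|$ and recalling that $\pi$ is now the identity shows that $\overline{R} = R|_N$ and $\overline{S} = S|_N$, so the induced system is literally the restriction of $(R, S)$ to $N$.

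I do not anticipate a genuine obstacle here, since the corollary is a degenerate special case of the theorem. The only point requiring a modicum of care is verifying that $N$ is genuinely a $B$-bimodule (so that the reduction setup is legitimate) — but this is given as a hypothesis of the corollary, matching the standing assumption $N \subset M$ a $B$-bimodule in the theorem's setup. One should also note that the compatibility conditions \eqref{redu-cond} are automatically satisfied and in fact carry no content here: with $\overline{R} = R|_N$ the identities $\overline{R}(u)\cdot v = R(u) \cdot v$ etc. hold trivially because the $B$-action on $N$ is the restriction of the $A$-action on $M$. Hence the whole argument is a short specialization, and I would present it as such rather than re-running the computation from the theorem's proof.
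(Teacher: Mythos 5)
Your proposal is correct and matches the paper's intent exactly: the corollary is stated there as an immediate consequence of the Marsden--Ratiu reduction theorem, and your choice $E = \{0\}$ (so that $E \cap B = \{0\}$, $(E \cap B)^0_N = N$, $B/E \cap B = B$, and $\pi = \mathrm{id}$) is precisely the specialization that makes this rigorous. Nothing further is needed.
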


\begin{corollary} Let $(R, S)$ be a Rota-Baxter system on $M$ over $A$. For any ideal $E \subset A$, the space $E_M^0$ is an $A/ E$-bimodule and the pair of maps $(\overline{R}, \overline{S}) : E_M^0 \rightarrow A / E$ defined by  $\overline{R} (u) = |R(u)|$ and $\overline{S}(u) = | S(u)|$ is a generalized Rota-Baxter system on $E_M^0$ over $A/ E$.
\end{corollary}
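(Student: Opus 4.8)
The plan is to recognize this corollary as the special case of the Marsden--Ratiu reduction theorem obtained by taking $B = A$ and $N = M$. First I would check that this choice satisfies the standing hypotheses imposed on the triple $(B, E, N)$. With $B = A$ we have $E \cap B = E$, and since $E$ is by assumption an ideal of $A$, the quotient $B / (E \cap B) = A / E$ is an associative algebra and the canonical projection $\pi : A \to A/E$ is an algebra morphism, exactly as required. Taking $N = M$, which is trivially an $A$-bimodule, the orthogonal space $(E \cap B)^0_N$ specializes to
\[
E_M^0 = \{ u \in M \mid a \cdot u = u \cdot a = 0, ~ \forall a \in E \},
\]
and the general discussion preceding the theorem already shows this is an $A/E$-bimodule via $|a| \cdot u = a \cdot u$ and $u \cdot |a| = u \cdot a$; this establishes the first assertion of the corollary.

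Next I would verify the hypotheses of the reduction theorem itself. These require $R((E \cap B)^0_N) \subset B$ and $S((E \cap B)^0_N) \subset B$, which in the present situation read $R(E_M^0) \subset A$ and $S(E_M^0) \subset A$. Since $R, S : M \to A$ by definition, these inclusions hold automatically, so no genuine condition needs checking here. The reduction theorem then applies verbatim and produces the generalized Rota-Baxter system $(\overline{R}, \overline{S}) : E_M^0 \to A/E$ with $\overline{R}(u) = |R(u)|$ and $\overline{S}(u) = |S(u)|$, which is precisely the pair appearing in the statement.

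Since the entire content is subsumed by the theorem once the identifications $B = A$ and $N = M$ are made, I do not expect any genuine obstacle. The only point deserving care is confirming that the ideal hypothesis on $E$ is exactly what guarantees the standing assumption that $A/E$ is an algebra with $\pi$ an algebra map; for a general subspace $E$ this would fail, which is why the corollary restricts to ideals. Everything else is a direct substitution into the already-proved theorem, and I would phrase the written proof simply as an application of the reduction theorem with these choices.
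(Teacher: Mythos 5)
Your proposal is correct and is exactly the argument the paper intends: the corollary is stated as an immediate consequence of the Marsden--Ratiu reduction theorem, obtained by specializing to $B = A$ and $N = M$, so that $E \cap B = E$ (an ideal, hence $A/E$ is an algebra and $\pi$ an algebra map), $(E \cap B)^0_N = E_M^0$, and the hypotheses $R(E_M^0) \subset A$, $S(E_M^0) \subset A$ hold trivially. Your additional remark that the ideal assumption on $E$ is precisely what secures the standing hypothesis on the quotient is accurate and well placed.
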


\section{Maurer-Cartan characterization and cohomology}\label{sec-3}
In this section, we construct a graded Lie algebra whose Maurer-Cartan elements are precisely generalized Rota-Baxter systems. The graded Lie algebra is obtained from Voronov's derived bracket construction.

We first recall the Gerstenhaber bracket on multilinear maps on a vector space. Let $V$ be a vector space. For any $f \in \mathrm{Hom}(V^{\otimes m}, V)$ and $g \in \mathrm{Hom}(V^{\otimes n }, V)$, the Gerstenhaber bracket $[f, g ] \in \mathrm{Hom}(V^{\otimes m +n-1}, V)$ is given by $[f, g] = f \bullet g - (-1)^{(m-1)(n-1)} g \bullet f$, where 
\begin{align*}
(f \bullet g )(v_1, \ldots, v_{m+n-1}) = \sum_{i=1}^m (-1)^{(i-1)(n-1)} f( v_1, \ldots, v_{i-1}, g (v_i, \ldots, v_{i+n-1}), v_{i+n}, \ldots, v_{m+n-1}).
\end{align*}
Then the graded space $\oplus_{n}  \mathrm{Hom}(V^{\otimes n }, V)$ with the Gerstenhaber bracket $[~, ~]$ is a degree $-1$ graded Lie algebra.

Let $A$ and $M$ be two vector spaces. Suppose $\mu : A^{\otimes 2} \rightarrow A, ~ (a, b) \mapsto ab$; $l : A \otimes M \rightarrow M, ~ (a, u) \mapsto a \cdot u$ and $r : M \otimes A \rightarrow A,~ (u,a) \mapsto u \cdot a$ are linear maps. Consider the vector space $V = A \oplus A \oplus M$. Define an element
\begin{align*}
\mu + \mu + l_1 + r_2 \in \mathrm{Hom}(( A \oplus A \oplus M)^{\otimes 2}, A \oplus A \oplus M )   ~~~~~ \text{ by }
\end{align*}
\begin{align*}
(\mu + \mu + l_1 + r_2) (( a_1, a_2, u), (b_1, b_2, v)) = (a_1 b_1, a_2 b_2, a_1 \cdot v + u \cdot b_2).
\end{align*}

\begin{prop}
With the above notations, $\mu$ defines an associative product on $A$ and $(l, r)$ gives rise to an $A$-bimodule structure on $M$ if and only if $\mu + \mu + l_1 + r_2  \in \mathrm{Hom}(( A \oplus A \oplus M)^{\otimes 2}, A \oplus A \oplus M )$ is a Maurer-Cartan element with respect to the Gerstenhaber bracket.
\end{prop}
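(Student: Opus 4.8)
The plan is to reduce the statement to the classical theorem of Gerstenhaber that a binary operation is associative precisely when it is a Maurer--Cartan element of the Gerstenhaber graded Lie algebra. Set $P := \mu + \mu + l_1 + r_2 \in \mathrm{Hom}((A \oplus A \oplus M)^{\otimes 2}, A \oplus A \oplus M)$. Since $P$ has degree $1$, the Gerstenhaber bracket gives $[P,P] = 2\, P \bullet P$, and from the definition of $\bullet$ one computes, for $x,y,z \in A \oplus A \oplus M$,
\begin{align*}
(P \bullet P)(x,y,z) = P(P(x,y), z) - P(x, P(y,z)),
\end{align*}
which is exactly the associator of the product $P$. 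Hence $P$ is a Maurer--Cartan element, i.e. $[P,P] = 0$, if and only if $P$ is an associative product on the space $V = A \oplus A \oplus M$. It therefore suffices to show that $P$ is associative if and only if $\mu$ is associative on $A$ and $(l,r)$ defines an $A$-bimodule structure on $M$.

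To this end I would plug in $x = (a_1, a_2, u)$, $y = (b_1, b_2, v)$, $z = (c_1, c_2, w)$ and expand both $P(P(x,y),z)$ and $P(x,P(y,z))$ using the explicit formula $(\mu+\mu+l_1+r_2)((a_1,a_2,u),(b_1,b_2,v)) = (a_1 b_1, a_2 b_2, a_1 \cdot v + u \cdot b_2)$. The associator then splits along the three summands $A \oplus A \oplus M$. Its first component is $(a_1 b_1) c_1 - a_1 (b_1 c_1)$, its second component is $(a_2 b_2) c_2 - a_2 (b_2 c_2)$, and its $M$-component is
\begin{align*}
\big[ (a_1 b_1) \cdot w - a_1 \cdot (b_1 \cdot w) \big] + \big[ (a_1 \cdot v) \cdot c_2 - a_1 \cdot (v \cdot c_2) \big] + \big[ (u \cdot b_2) \cdot c_2 - u \cdot (b_2 c_2) \big].
\end{align*}

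The final step is to read off the equivalences. Vanishing of the first two components for all inputs is literally the associativity of $\mu$ on the two copies of $A$. For the $M$-component, the three bracketed terms depend on the $M$-entries $w$, $v$ and $u$ of $z$, $y$ and $x$ respectively, so by setting two of these three entries to zero one isolates each bracket in turn; thus the $M$-component vanishes identically if and only if each of
\begin{align*}
(ab) \cdot w = a \cdot (b \cdot w), \qquad (a \cdot v) \cdot c = a \cdot (v \cdot c), \qquad (u \cdot b) \cdot c = u \cdot (bc)
\end{align*}
holds, which are exactly the three bimodule compatibilities. Conversely, if $\mu$ is associative and these axioms hold, all three components vanish and $P$ is associative. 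I expect the only real obstacle to be organizational: keeping track of which copy of $A$ supplies each action in the $M$-component and justifying that the three bracketed terms decouple, so that vanishing of their sum forces each to vanish; the conceptual input (Maurer--Cartan $\Leftrightarrow$ associativity) is standard.
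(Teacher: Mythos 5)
Your proposal is correct and follows essentially the same route as the paper: identify the Maurer--Cartan condition $[P,P]=2\,P\bullet P=0$ with vanishing of the associator of the product $P$ on $A\oplus A\oplus M$, expand both nested products componentwise, and match the $A$-components with associativity of $\mu$ and the $M$-component with the three bimodule axioms. Your explicit decoupling step (setting two of the three $M$-entries to zero to isolate each bimodule identity) is a detail the paper leaves implicit, but it is the same argument.
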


\begin{proof}
For any $(a_1, a_2, u), (b_1, b_2 , v)$ and $(c_1, c_2, w) \in A \oplus A \oplus M$, we have
\begin{align*}
&(\mu + \mu + l_1 + r_2) \big( (\mu + \mu + l_1 + r_2) ( (a_1, a_2, u), (b_1, b_2 , v )  ), (c_1, c_2, w) \big) \\
&= (\mu + \mu + l_1 + r_2) \big( (a_1 b_1, a_2 b_2 , a_1 \cdot  v + u \cdot b_2), (c_1, c_2, w) \big) \\
&= \big(   (a_1 b_1) c_1 , (a_2 b_2 ) c_2 , (a_1 b_1) \cdot w + (a_1 \cdot v) \cdot c_2 + (u \cdot b_2 ) \cdot c_2 \big). 
\end{align*}
On the other hand, 
\begin{align*}
&(\mu + \mu + l_1 + r_2) \big(   (a_1, a_2, u),  (\mu + \mu + l_1 + r_2) ( (b_1, b_2 , v ) , (c_1, c_2, w)  ) \big) \\
&= (\mu + \mu + l_1 + r_2) \big(    (a_1, a_2, u), ( b_1 c_1, b_2 c_2, b_1 \cdot w + v \cdot c_2) \big)\\
&= \big(  a_1 (b_1 c_1), a_2 (b_2 c_2), a_1 \cdot (b_1 \cdot w) + a_1 \cdot (v \cdot c_2 ) + u \cdot (b_2 c_2)   \big).
\end{align*}
Hence $ ( \mu + \mu + l_1 + r_2 ) \circ ( \mu + \mu + l_1 + r_2 ) = 0 $ if and only if $\mu$ is associative and $(l, r)$ defines an $A$-bimodule structure on $M$. 
\end{proof}

Thus it follows from the above Proposition that the graded vector space $\bigoplus \mathrm{Hom} (( A \oplus A \oplus M)^{\otimes 2}, A \oplus A \oplus M )$ with the differential $d_{\mu + \mu + l_1 + r_2} := [ \mu + \mu + l_1 + r_2, ~~ ]$ is a dgLa. Moreover, it follows from the definition of the bracket that $\bigoplus_{n \geq 1} \mathrm{Hom} (M^{\otimes n}, A \oplus A) $ is an abelian subalgebra. Therefore, by the derived bracket construction of Voronov \cite{voro} yields a graded Lie algebra bracket on $\bigoplus_{n \geq 1} \mathrm{Hom} (M^{\otimes n}, A \oplus A)$ given by
\begin{align}\label{deri-brk}
\llbracket ( P, Q) , (P', Q') \rrbracket := (-1)^m [[  \mu + \mu + l_1 + r_2, (P, Q) ], (P', Q') ],
\end{align}
for $( P, Q) \in \mathrm{Hom} (M^{\otimes m}, A \oplus A) ,~ (P', Q') \in \mathrm{Hom} (M^{\otimes n}, A \oplus A) $. Let $\mathrm{pr}_1, \mathrm{pr}_2 : A \oplus A \rightarrow A$ denote the projection maps onto the first and second factor, respectively. Explicitly, the bracket (\ref{deri-brk}) is given by
 \begin{align} \label{der-pr1}
&\mathrm{pr}_1 \big(  \llbracket ( P, Q) , (P', Q') \rrbracket (u_1, \ldots, u_{m+n}) \big) \\
&= \sum_{i=1}^m (-1)^{(i-1) n} P (u_1, \ldots, u_{i-1}, P' (u_i, \ldots, u_{i+n-1}) \cdot u_{i+n}, \ldots, u_{m+n} ) \nonumber\\
~&- \sum_{i=1}^m (-1)^{in} P (u_1, \ldots, u_{i-1}, u_i \cdot Q' (u_{i+1}, \ldots, u_{i+n}), u_{i+n+1}, \ldots, u_{m+n}) \nonumber\\
&- (-1)^{mn} \bigg\{ \sum_{i=1}^n (-1)^{(i-1)m}~ P' (u_1, \ldots, u_{i-1}, P (u_i, \ldots, u_{i+m-1}) \cdot u_{i+m}, \ldots, u_{m+n}) \nonumber\\
&- \sum_{i=1}^n (-1)^{im}~ P' (u_1, \ldots, u_{i-1}, u_i \cdot Q (u_{i+1}, \ldots, u_{i+m}) , u_{i+m+1}, \ldots, u_{m+n} ) \bigg\} \nonumber\\
&+ (-1)^{mn} \big[ P(u_1, \ldots, u_m) P' (u_{m+1}, \ldots, u_{m+n} ) - (-1)^{mn}~ P' (u_1, \ldots, u_n) P ( u_{n+1}, \ldots, u_{m+n} ) \big], \nonumber 
\end{align}
and
 \begin{align}\label{der-pr2}
&\mathrm{pr}_2 \big(  \llbracket ( P, Q) , (P', Q') \rrbracket  (u_1, \ldots, u_{m+n}) \big)\\
&= \sum_{i=1}^m (-1)^{(i-1) n} Q (u_1, \ldots, u_{i-1}, P' (u_i, \ldots, u_{i+n-1}) \cdot u_{i+n}, \ldots, u_{m+n} ) \nonumber\\
~&- \sum_{i=1}^m (-1)^{in} Q (u_1, \ldots, u_{i-1}, u_i \cdot Q' (u_{i+1}, \ldots, u_{i+n}), u_{i+n+1}, \ldots, u_{m+n}) \nonumber\\
&- (-1)^{mn} \bigg\{ \sum_{i=1}^n (-1)^{(i-1)m}~ Q' (u_1, \ldots, u_{i-1}, P (u_i, \ldots, u_{i+m-1}) \cdot u_{i+m}, \ldots, u_{m+n}) \nonumber\\
&- \sum_{i=1}^n (-1)^{im}~ Q' (u_1, \ldots, u_{i-1}, u_i \cdot Q (u_{i+1}, \ldots, u_{i+m}) , u_{i+m+1}, \ldots, u_{m+n} ) \bigg\} \nonumber\\
&+ (-1)^{mn} \big[ Q(u_1, \ldots, u_m) Q' (u_{m+1}, \ldots, u_{m+n} ) - (-1)^{mn}~ Q' (u_1, \ldots, u_n) Q ( u_{n+1}, \ldots, u_{m+n} ) \big], \nonumber
\end{align}
for $( P, Q) \in  \mathrm{Hom} (M^{\otimes m}, A \oplus A), (P', Q') \in \mathrm{Hom} (M^{\otimes n}, A \oplus A), ~ m, n \geq 1.$ We can extend this bracket to the graded space $\bigoplus_{n \geq 0} \mathrm{Hom}(M^{\otimes n}, A \oplus A)$ by the following
\begin{align}
\mathrm{pr}_1 \big( \llbracket ( P, Q) , (a, b) \rrbracket (u_1, \ldots, u_{m}) \big) =  ~& \sum_{i=1}^m P (u_1, \ldots, u_{i-1}, a \cdot u_i - u_i \cdot  b , u_{i+1}, \ldots, u_{m} ) \\
~~&+ P (u_1, \ldots, u_m)  a - a P (u_1, \ldots, u_m), \nonumber
\end{align}
\begin{align}
\mathrm{pr}_2 \big( \llbracket ( P, Q) , (a, b) \rrbracket (u_1, \ldots, u_{m}) \big) =  ~& \sum_{i=1}^m Q (u_1, \ldots, u_{i-1}, a \cdot u_i - u_i \cdot b , u_{i+1}, \ldots, u_{m} ) \\
~~&+ Q (u_1, \ldots, u_m) b - b Q (u_1, \ldots, u_m), \nonumber
\end{align}
and finally
\begin{align}\label{der-pr5}
\llbracket (a,b), (c,d) \rrbracket = (ac-ca, bd-db).
\end{align}

\begin{thm}\label{thm-rbs-mc}
Let $A$ be an associative algebra and $M$ an $A$-bimodule. Then the graded vector space $C^* (M, A) = \bigoplus_{n \geq 0}   \mathrm{Hom} (M^{\otimes n}, A \oplus A)  $ with the bracket $\llbracket ~~, ~~ \rrbracket$ forms a graded Lie algebra. A pair $(R, S)$ of linear maps from $M$ to $A$ is a generalized Rota-Baxter system if and only if $(R, S) \in C^1 (M, A)$ is a Maurer-Cartan element in the graded Lie algebra $(C^* (M, A), \llbracket ~~, ~~ \rrbracket )$.
\end{thm}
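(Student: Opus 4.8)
My plan is to treat the two claims separately: the graded Lie algebra structure I obtain by exhibiting the correct ``$V$-data'' and invoking Voronov's derived bracket theorem \cite{voro}, and then the Maurer--Cartan characterization reduces to substituting $m=n=1$ into the explicit bracket formulas (\ref{der-pr1})--(\ref{der-pr2}). First I would set $\mathfrak{g}=\bigoplus_n \mathrm{Hom}((A\oplus A\oplus M)^{\otimes n}, A\oplus A\oplus M)$ with the Gerstenhaber bracket, write $\mathcal{D}=\mu+\mu+l_1+r_2$, take $\mathfrak{a}=\bigoplus_{n\geq 1}\mathrm{Hom}(M^{\otimes n}, A\oplus A)$, and let $P:\mathfrak{g}\to\mathfrak{a}$ be the projection that restricts inputs to $M$ and the output to $A\oplus A$. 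The quadruple $(\mathfrak{g},\mathfrak{a},P,\mathcal{D})$ is a $V$-datum provided: (i) $\mathfrak{a}$ is an abelian subalgebra, (ii) $\ker P$ is a subalgebra, and (iii) $\mathcal{D}\in\ker P$ with $[\mathcal{D},\mathcal{D}]=0$. The equality $[\mathcal{D},\mathcal{D}]=0$ is exactly the content of the preceding Proposition (associativity of $\mu$ together with the bimodule axioms), so (iii) only requires noting that $P(\mathcal{D})=0$, which holds because $\mu$ needs two $A$-inputs while $l_1,r_2$ each need an $A$-input, so $\mathcal{D}$ vanishes when restricted to $M\otimes M$.

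The crucial and most delicate point is (i). Here I would argue that for $x,y\in\mathfrak{a}$ the Gerstenhaber $\bullet$-product inserts the $A\oplus A$-valued output of one map into an input slot of the other, but every input slot of a map in $\mathfrak{a}$ accepts only arguments from the summand $M$; since $A\oplus A$ and $M$ are distinct direct summands of $V$, all such insertions vanish and $[\mathfrak{a},\mathfrak{a}]=0$. Step (ii) is the routine observation that maps vanishing on the relevant restriction are closed under the bracket. With the $V$-datum in place, Voronov's theorem endows $\mathfrak{a}$ with an $L_\infty$-structure whose $k$-ary bracket is $l_k(x_1,\dots,x_k)=P[\cdots[[\mathcal{D},x_1],x_2],\dots,x_k]$; I would then check that $l_1=0$ and, because $\mathcal{D}$ is \emph{bilinear}, that $l_k=0$ for all $k\geq 3$ — any third iteration is forced to feed either two $A$-valued outputs into a single copy of $\mu$, or an $A\oplus A$-valued output into an $M$-slot, both of which are killed by $P$. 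Thus only the binary bracket $l_2=\llbracket\,,\,\rrbracket$ of (\ref{deri-brk}) survives; extending it to the degree-$0$ part by (\ref{der-pr5}) and verifying graded antisymmetry and Jacobi across degrees yields the graded Lie algebra $(C^*(M,A),\llbracket\,,\,\rrbracket)$ with the explicit formulas (\ref{der-pr1})--(\ref{der-pr2}).

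For the second claim, $(R,S)\in C^1(M,A)=\mathrm{Hom}(M,A\oplus A)$ is a Maurer--Cartan element if and only if $\llbracket(R,S),(R,S)\rrbracket=0$. I would evaluate this directly using (\ref{der-pr1}) and (\ref{der-pr2}) with $m=n=1$ and $P=P'=R$, $Q=Q'=S$. The four groups of terms collapse so that $\mathrm{pr}_1\llbracket(R,S),(R,S)\rrbracket(u,v)$ is a nonzero scalar multiple of $R(u)R(v)-R(R(u)\cdot v+u\cdot S(v))$, and $\mathrm{pr}_2$ gives the same expression with the outer $R$ replaced by $S$. Setting both components to zero reproduces precisely identities (\ref{rb-system1}) and (\ref{rb-system2}), identifying the Maurer--Cartan elements in $C^1(M,A)$ with the generalized Rota--Baxter systems.

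The main obstacle lies entirely in the first claim: assembling the $V$-data — especially the abelian-subalgebra check (i) — and, above all, confirming that the higher derived brackets vanish so that one genuinely obtains a graded Lie algebra rather than a nontrivial $L_\infty$-algebra. Once the bracket is established, the Maurer--Cartan computation of the second claim is a short and routine verification.
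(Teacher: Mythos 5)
Your proposal is correct and follows essentially the same route as the paper: the paper also obtains the bracket from Voronov's derived bracket construction applied to exactly this $V$-data (the Gerstenhaber algebra of $A \oplus A \oplus M$, the Maurer--Cartan element $\mu + \mu + l_1 + r_2$, and the abelian subalgebra $\bigoplus_{n \geq 1} \mathrm{Hom}(M^{\otimes n}, A \oplus A)$), and then proves the second claim by the same $m=n=1$ evaluation of (\ref{der-pr1})--(\ref{der-pr2}), finding the factor $2$ you predict. The only difference is one of exposition: the paper compresses the first claim into ``follows from previous discussions,'' whereas you spell out the $V$-data verification and the vanishing of the higher derived brackets $l_k$, $k \neq 2$, which is exactly the point the paper leaves implicit.
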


\begin{proof}
The first part follows from previous discussions. For the second part, it follows from (\ref{der-pr1}) and (\ref{der-pr2}) that if $(R,S) \in \mathrm{Hom}(M, A \oplus A)$ then 
\begin{align*}
\mathrm{pr}_1 ( \llbracket (R,S), (R,S) \rrbracket (u,v) ) =~&  2 \big( R (R(u) \cdot  v + u \cdot S(v)) - R(u) R(v)    \big), \\
\mathrm{pr}_2 ( \llbracket (R,S), (R,S) \rrbracket (u,v) ) =~&  2 \big( S (R(u) \cdot  v + u \cdot S(v)) - S(u) S(v)    \big).
\end{align*}
Hence $(R,S)$ is a Maurer-Cartan element (i.e. $\llbracket (R,S), (R,S) \rrbracket =0$) if and only if $(R,S)$ is a generalized Rota-Baxter system.
\end{proof}

Thus, generalized Rota-Baxter systems can be characterized as Maurer-Cartan elements in a gLa. It follows from the above theorem that if $(R, S)$ is a generalized Rota-Baxter system, then $d_{(R, S)} := \llbracket (R, S), ~~ \rrbracket $
is a differential on $C^* (M, A)$ and makes the gLa $(C^\bullet (M, A), \llbracket ~~, ~~ \rrbracket )$ into a dgLa.

The cohomology of the cochain complex $(C^\bullet (M,A), d_{(R,S)})$ is called the cohomology of the generalized Rota-Baxter system $(R,S)$. We denote the corresponding cohomology groups simply by $H^\bullet (M,A)$. In the next subsection, we view this cohomology as the Hochschild cohomology of $(M, *)$ with coefficients in a suitable representation on $A \oplus A$.

The proof the following result is standard in the study of Maurer-Cartan elements in a gLa.

\begin{thm}\label{sum-rbs}
Let $(R, S)$ be a generalized Rota-Baxter system on $M$ over the algebra $A$. Then for any pair $(R', S')$ of linear maps from $M$ to $A$, the sum $(R+ R', S + S')$ is a generalized Rota-Baxter system if and only if $(R', S')$ is a Maurer-Cartan element in the dgLa $(C^\bullet (M, A), \llbracket ~~, ~~ \rrbracket, d_{(R, S)} ),$ i.e.
\begin{align*}
\llbracket (R+ R', S + S'), (R+ R', S + S') \rrbracket = 0  ~\Leftrightarrow ~ d_{(R, S)} (R', S') + \frac{1}{2} \llbracket (R', S'), (R', S')  \rrbracket = 0.
\end{align*}
\end{thm}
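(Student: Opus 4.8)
The plan is to treat this as the standard statement that a Maurer--Cartan element of a differential graded Lie algebra, when shifted by a second element, remains Maurer--Cartan precisely when that second element solves the twisted Maurer--Cartan equation. By Theorem \ref{thm-rbs-mc}, both sides of the claimed equivalence are themselves Maurer--Cartan conditions in $(C^\bullet (M,A), \llbracket~,~\rrbracket)$: the pair $(R+R', S+S')$ is a generalized Rota--Baxter system if and only if $\llbracket (R+R', S+S'),\, (R+R', S+S') \rrbracket = 0$. So the entire proof reduces to expanding this bracket.

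First I would write $x = (R,S)$ and $y = (R',S')$, both of which lie in $C^1(M,A) = \mathrm{Hom}(M, A \oplus A)$, and expand by bilinearity of the graded bracket:
\[
\llbracket x+y,\, x+y \rrbracket = \llbracket x,x \rrbracket + \llbracket x,y \rrbracket + \llbracket y,x \rrbracket + \llbracket y,y \rrbracket .
\]
Next I would invoke the graded antisymmetry of $\llbracket~,~\rrbracket$. Under the grading induced by Voronov's derived bracket construction (\ref{deri-brk}), the space $C^1(M,A)$ sits in odd degree, so the bracket of two odd elements is symmetric, giving $\llbracket x,y \rrbracket = \llbracket y,x \rrbracket$. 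Hence the two cross terms coincide and the expansion collapses to
\[
\llbracket x+y,\, x+y \rrbracket = \llbracket x,x \rrbracket + 2\,\llbracket x,y \rrbracket + \llbracket y,y \rrbracket .
\]

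Then I would use the hypothesis that $(R,S)$ is a generalized Rota--Baxter system, which by Theorem \ref{thm-rbs-mc} means $\llbracket x,x \rrbracket = 0$, together with the definition $d_{(R,S)} = \llbracket (R,S),~\rrbracket$, so that $\llbracket x,y \rrbracket = d_{(R,S)}(R',S')$. Substituting yields
\[
\llbracket x+y,\, x+y \rrbracket = 2\, d_{(R,S)}(R',S') + \llbracket (R',S'),\,(R',S') \rrbracket ,
\]
and since the ground field has characteristic zero we may divide by $2$. Thus the vanishing of the left-hand side is equivalent to $d_{(R,S)}(R',S') + \tfrac{1}{2}\llbracket (R',S'),\,(R',S') \rrbracket = 0$, which together with Theorem \ref{thm-rbs-mc} is exactly the asserted equivalence.

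The only delicate point is the sign bookkeeping that makes the two cross terms add rather than cancel; I would confirm it by checking the parity assignment of the derived bracket, under which elements of $C^1(M,A)$ are odd. Everything else is formal, so I expect no genuine obstacle beyond this verification — indeed the result is, as the statement itself notes, the standard behavior of Maurer--Cartan elements in a graded Lie algebra.
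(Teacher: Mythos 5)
Your proof is correct and is precisely the standard Maurer--Cartan argument that the paper itself invokes (the paper omits the proof, remarking only that it is ``standard in the study of Maurer-Cartan elements in a gLa''): bilinear expansion of $\llbracket x+y, x+y \rrbracket$, symmetry of the bracket on two odd elements, vanishing of $\llbracket (R,S),(R,S) \rrbracket$ by Theorem \ref{thm-rbs-mc}, and division by $2$ in characteristic zero. Your parity bookkeeping is also right: elements of $C^1(M,A)$ are odd, and the symmetry $\llbracket x,y\rrbracket = \llbracket y,x\rrbracket$ for $x,y \in C^1(M,A)$ can be confirmed directly from the explicit formula (\ref{der-pr1})--(\ref{der-pr2}) with $m=n=1$, which is manifestly symmetric in its two arguments.
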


\subsection{Hochschild cohomology}
Let $(R,S)$ be a generalized Rota-Baxter system on $M$ over the algebra $A$. Consider the associative algebra $(M, *)$ where $u * v = R(u) \cdot v + u \cdot S(v)$, for $u, v \in M$.

The following result generalizes \cite[Lemma 2.11]{uchino}.

\begin{prop}\label{prop-hoch}
The associative algebra $(M, *)$ has a bimodule representation on $A \oplus A$ given by
\begin{align*}
l (u, (a,b)) =~& (R(u) a - R(u \cdot b), S(u) b - S(u \cdot b) ),\\
r ((a,b), u) =~& (a R(u) - R(a \cdot u), b S(u) - S(a \cdot u) ),
\end{align*}
for $u \in M$ and $(b,c) \in A \oplus A$.
\end{prop}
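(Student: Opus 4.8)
The plan is to verify directly the three defining axioms of a bimodule over the associative algebra $(M, *)$, namely the left-action, right-action, and compatibility conditions
\begin{align*}
l(u * v, (a,b)) &= l\big(u, l(v, (a,b))\big), \qquad r\big((a,b), u*v\big) = r\big(r((a,b), u), v\big),\\
l\big(u, r((a,b), v)\big) &= r\big(l(u,(a,b)), v\big),
\end{align*}
for all $u, v \in M$ and $(a,b) \in A \oplus A$. Since each of $l$ and $r$ is valued in $A \oplus A$, and its two components are built from $R$ and $S$ in a perfectly parallel way, it suffices to check the first ($A$-)component of each identity; the second component is obtained verbatim after replacing $R$ by $S$ and invoking (\ref{rb-system2}) in place of (\ref{rb-system1}).

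First I would treat the left-action axiom. Using $u * v = R(u) \cdot v + u \cdot S(v)$ together with the defining identities (\ref{rb-system1}) and (\ref{rb-system2}), one has $R(u*v) = R(u)R(v)$ and $S(u*v) = S(u)S(v)$, so the left-hand side expands with no surprises. The only available moves are the linearity of $R$ and $S$ and the three bimodule compatibilities $(xy)\cdot w = x\cdot(y\cdot w)$, $(x\cdot w)\cdot y = x\cdot(w\cdot y)$, $(w\cdot x)\cdot y = w\cdot(xy)$ of $M$ over $A$; applying these to both sides cancels all common terms and reduces the identity to $R\big(R(u)\cdot(v\cdot b)\big) = R(u)R(v\cdot b) - R\big(u\cdot S(v\cdot b)\big)$, which is exactly (\ref{rb-system1}) applied to the pair $(u, v\cdot b)$. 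The right-action axiom and the compatibility condition succumb to the identical recipe: expand both sides by the definitions of $l$, $r$ and $*$, push every action into a canonical position using the bimodule axioms and linearity, and then recognize the surviving discrepancy as a single instance of (\ref{rb-system1}) (for the first component). For example, the compatibility reduces, after these manipulations, to two nested applications of (\ref{rb-system1}) to the pairs $(u, a\cdot v)$ and $(u\cdot b, v)$.

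The computation is entirely mechanical once organized, and the only genuine obstacle is bookkeeping: each side produces several terms when $u*v$ is substituted inside $R$ and $S$ and when the mixed products $R(u)a$, $a\cdot v$, $u\cdot b$ are expanded, so the work lies in arranging these terms so that the Rota-Baxter identities visibly cancel the cross terms. Because of the stated symmetry between the $R$- and $S$-components and between the left and right actions, carrying out one component of one axiom in full makes the remaining verifications routine. This also explains the structural origin of the formulas: they are precisely the corrections needed so that the defect in each naive action is absorbed by (\ref{rb-system1}) and (\ref{rb-system2}).
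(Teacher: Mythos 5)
Your proposal is correct and takes the same route as the paper, whose entire proof is the remark that ``the result follows from a straightforward calculation'': your write-up is exactly that calculation, and the reductions you identify are right — the left-module axiom collapses to (\ref{rb-system1})/(\ref{rb-system2}) applied to the pair $(u, v\cdot b)$, the right-module axiom to the pair $(a\cdot u, v)$, and the middle compatibility to the two pairs $(u, a\cdot v)$ and $(u\cdot b, v)$. Two minor wording quibbles only: those two instances in the compatibility check are separate summands, not ``nested,'' and the second components are not obtained by a literally verbatim $R\mapsto S$ substitution (the arguments inside the outer $S$ still contain $R$, since both (\ref{rb-system1}) and (\ref{rb-system2}) share the same argument $R(u)\cdot v + u\cdot S(v)$), though the step-by-step parallelism you invoke does go through.
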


\begin{proof}
The result follows from a straightforward calculation.
\end{proof}

It follows from the above lemma that we can consider the Hochschild cohomology of the associative algebra $(M, *)$ with coefficients in the bimodule $A \oplus A$. More precisely, the Hochschild complex is given by $(C^\bullet_{\mathrm{Hoch}} (M, A \oplus A), \delta_{\mathrm{Hoch}} )$ where
%\begin{align*}
$C^n_{\mathrm{Hoch}} (M, A \oplus A) = \mathrm{Hom}(M^{\otimes n}, A \oplus A), \text{ for } n \geq 0$
%\end{align*}
and the differential $\delta_{\mathrm{Hoch}} : C^n_{\mathrm{Hoch}} (M, A \oplus A) \rightarrow C^{n+1}_{\mathrm{Hoch}} (M, A \oplus A)$ given by
\begin{align*}
\mathrm{pr}_1 (\delta_{\mathrm{Hoch}} (f,g)) =~& R ( u_1) f (u_2, \ldots, u_{n+1})  - R (u_1  \cdot g(u_2, \ldots, u_{n+1})) \\
~&+ \sum_{i=1}^n (-1)^i ~ f (u_1, \ldots, u_{i-1}, R(u_i) \cdot u_{i+1} + u_i \cdot S( u_{i+1}) , \ldots, u_{n+1}) \\
~&+ (-1)^{n+1}~  f (u_1, \ldots, u_n ) R(u_{n+1})  - (-1)^{n+1} R ( f(u_1, \ldots, u_n) \cdot u_{n+1} ).  \\
\mathrm{pr}_2 (\delta_{\mathrm{Hoch}} (f,g)) =~& S ( u_1) g (u_2, \ldots, u_{n+1})  - S (u_1 \cdot g(u_2, \ldots, u_{n+1})) \\
~&+ \sum_{i=1}^n (-1)^i ~ g (u_1, \ldots, u_{i-1}, R(u_i) \cdot u_{i+1} + u_i \cdot S( u_{i+1}) , \ldots, u_{n+1}) \\
~&+ (-1)^{n+1}~  g (u_1, \ldots, u_n ) S(u_{n+1})  - (-1)^{n+1} S ( f(u_1, \ldots, u_n) \cdot u_{n+1} ).
\end{align*}

\begin{prop}
Let $(R,S)$ be a generalized Rota-Baxter system on $M$ over the algebra $A$. Then
\begin{align*}
d_{(R,S)} (f,g) = (-1)^n \delta_{\mathrm{Hoch}} (f,g),~ \text{ for } (f,g) \in C^n_{\mathrm{Hoch}}(M, A \oplus A).
\end{align*}
\end{prop}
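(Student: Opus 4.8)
The plan is to evaluate the left-hand side $d_{(R,S)}(f,g)=\llbracket(R,S),(f,g)\rrbracket$ directly from the explicit derived-bracket formulas (\ref{der-pr1}) and (\ref{der-pr2}), and then compare the outcome term by term with the Hochschild differential $\delta_{\mathrm{Hoch}}$ displayed just before the statement. Since $(R,S)\in\mathrm{Hom}(M,A\oplus A)$ has arity $m=1$ and $(f,g)\in\mathrm{Hom}(M^{\otimes n},A\oplus A)$ has arity $n$, the bracket $\llbracket(R,S),(f,g)\rrbracket$ lands in $\mathrm{Hom}(M^{\otimes(n+1)},A\oplus A)=C^{n+1}_{\mathrm{Hoch}}(M,A\oplus A)$, so the identity is at least well-posed as a map into the correct space.

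First I would substitute $m=1$, $(P,Q)=(R,S)$, $(P',Q')=(f,g)$ into (\ref{der-pr1}). The first sum collapses (only $i=1$ survives) to $R\big(f(u_1,\ldots,u_n)\cdot u_{n+1}\big)$ and the second to $-(-1)^n R\big(u_1\cdot g(u_2,\ldots,u_{n+1})\big)$. The two sums inside the braces, indexed by $i=1,\ldots,n$, use $P(u_i)=R(u_i)$ and $Q(u_i)=S(u_i)$; after applying the elementary identity $-(-1)^{i-1}=(-1)^i$ they merge into the single middle sum $(-1)^n\sum_{i=1}^n(-1)^i f(u_1,\ldots,R(u_i)\cdot u_{i+1}+u_i\cdot S(u_{i+1}),\ldots,u_{n+1})$. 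Finally the multiplicative block yields $(-1)^n R(u_1)f(u_2,\ldots,u_{n+1})-f(u_1,\ldots,u_n)R(u_{n+1})$. Collecting the five resulting pieces, I would check that they coincide with $(-1)^n\,\mathrm{pr}_1(\delta_{\mathrm{Hoch}}(f,g))$; here invoking Proposition \ref{prop-hoch} identifies the two extreme action terms $R(u_1)f(\ldots)-R(u_1\cdot g(\ldots))$ and $f(\ldots)R(u_{n+1})-R(f(\ldots)\cdot u_{n+1})$ as the left and right $(M,*)$-module actions on $A\oplus A$. The $\mathrm{pr}_2$-component is obtained verbatim from (\ref{der-pr2}) with $S,g$ in place of $R,f$, so I would dispose of it in one line.

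The computation is wholly mechanical, so the only genuine obstacle is sign bookkeeping. The delicate point is that with $m=1$ the two multiplicative boundary terms of (\ref{der-pr1}) carry different powers of $-1$: the prefactor $(-1)^{mn}=(-1)^n$ multiplies $R(u_1)f(\ldots)$ but is cancelled on $f(\ldots)R(u_{n+1})$ by the internal $(-1)^{mn}$ (as $(-1)^{2n}=1$), and likewise the two ``creation'' terms $R(f(\ldots)\cdot u_{n+1})$ and $R(u_1\cdot g(\ldots))$ emerge with unequal prefactors $+1$ and $-(-1)^n$. Matching these four extreme terms against the four boundary terms of $\delta_{\mathrm{Hoch}}$, whose prefactors are $1,\,-1,\,(-1)^{n+1},\,-(-1)^{n+1}$, is exactly what forces the global normalization to be $(-1)^n$ rather than $(-1)^{n+1}$. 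I would therefore verify these extreme terms with the most care and keep the sign rewrite $-(-1)^{i-1}=(-1)^i$ explicit when collapsing the middle double sum, treating everything else as routine.
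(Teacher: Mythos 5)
Your proposal is correct and essentially identical to the paper's own proof: substitute $m=1$, $(P,Q)=(R,S)$, $(P',Q')=(f,g)$ into (\ref{der-pr1})--(\ref{der-pr2}), collapse the two $m$-indexed sums, merge the braced sums using $-(-1)^{i-1}=(-1)^{i}$, and compare term by term with $\delta_{\mathrm{Hoch}}$, handling $\mathrm{pr}_2$ in one line. Your sign for the merged middle sum, $+(-1)^n\sum_{i=1}^n(-1)^i f(u_1,\ldots,R(u_i)\cdot u_{i+1}+u_i\cdot S(u_{i+1}),\ldots,u_{n+1})$, is in fact the correct one: the paper's intermediate display writes $-(-1)^n\sum_{i=1}^n(-1)^i$ there, a sign typo that would contradict its own final equality, so your careful bookkeeping corrects a slip rather than introducing one.
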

\begin{proof}
For $(f,g) \in C^n_{\mathrm{Hoch}}(M, A \oplus A)$ and $u_1, \ldots, u_{n+1} \in M$, we have from (\ref{der-pr1}) that
\begin{align*}
&\mathrm{pr}_1 \big( \llbracket (R,S),(f,g) \rrbracket (u_1, \ldots, u_{n+1}) \big) \\
&= R ( f(u_1, \ldots, u_n) \cdot u_{n+1}) - (-1)^n R ( u_1 \cdot g(u_2, \ldots, u_{n+1})) \\
& - (-1)^n \sum_{i=1}^n (-1)^i ~ f (u_1, \ldots, u_{i-1}, R(u_i) \cdot u_{i+1}+ u_i \cdot S(u_{i+1}), \ldots, u_{n+1}) \\
& + (-1)^n R(u_1) f (u_2, \ldots, u_{n+1}) - f (u_1, \ldots, u_n) R(u_{n+1}) \\
&= (-1)^n~ \mathrm{pr}_1 \big( \delta_{\mathrm{Hoch}} (f,g) (u_1, \ldots, u_{n+1}) \big). 
\end{align*}
Similarly, we can show that $\mathrm{pr}_2 \big( \llbracket (R,S),(f,g) \rrbracket (u_1, \ldots, u_{n+1}) \big) = (-1)^n~ \mathrm{pr}_2 \big( \delta_{\mathrm{Hoch}} (f,g) (u_1, \ldots, u_{n+1}) \big)$. Hence, we get $d_{(R,S)} (f,g) = \llbracket (R,S) , (f,g) \rrbracket = (-1)^n \delta_{\mathrm{Hoch}} (f,g)$.
\end{proof}

It follows from the above proposition that the cohomology of a generalized Rota-Baxter system induced from the Maurer-Cartan element coincides with the above Hochschild cohomology.

\subsection{Relation with dendriform algebra cohomology}\label{subsec-dend-rel}
Cohomology of dendriform algebras was first introduced by Loday \cite{loday} for trivial coefficients. Later on, an operadic approach of the cohomology was given in the book by Loday and Vallette \cite{lod-val-book}. The explicit description of the cohomology can be found in \cite{das1}. In this subsection, we find the relationship between the cohomology of a generalized Rota-Baxter system and the cohomology of the corresponding dendriform algebra.

For each $n \geq 1$, let $C_n$ be the set of first $n$ natural numbers. We denote the elements of $C_n$ by $\{ [1], [2], \ldots, [n]\}$ for convenience. Then it has been shown in \cite{das1} that for any vector space $D$, the collection of 
spaces
\begin{align*}
\mathcal{O}(n) := \mathrm{Hom}(\mathbb{K}[C_n] \otimes D^{ \otimes n } , D),~\text{ for } n \geq 1
\end{align*}
forms a non-symmetric operad with partial compositions

$(f \circ_i g)([r]; a_1, \ldots, a_{m+n-1}) =$
\begin{align}\label{dend-partial-comp}
\begin{cases}
f ([r]; a_1, \ldots, a_{i-1}, g ([1]+ \cdots + [n]; a_i, \ldots, a_{i+n-1}), \ldots, a_{m+n-1}) ~& \text{ if } 1 \leq r \leq i-1 \\
f ([i]; a_1, \ldots, a_{i-1}, g ([r-i+1]; a_i, \ldots, a_{i+n-1}), \ldots, a_{m+n-1}) ~& \text{ if } i \leq r \leq i+ n-1\\
f ([r-n+1]; a_1, \ldots, a_{i-1}, g ([1]+ \cdots + [n]; a_i, \ldots, a_{i+n-1}), \ldots, a_{m+n-1}) ~& \text{ if } i+ n \leq r \leq m + n -1,
\end{cases}
\end{align}
for $f \in \mathcal{O}(m), ~ g \in \mathcal{O}(n),~ 1 \leq i \leq m$ and $[r] \in C_{m+n-1}$. Therefore, by a result of Gerstenhaber and Voronov \cite{gers-voro}, the graded space $\mathcal{O} (\bullet + 1) = \oplus_{n \geq 1} \mathcal{O}( n +1 )$ carries a graded Lie bracket
\begin{align}\label{llceil-br}
\llceil f, g \rrceil = \sum_{i=1}^{m+1} (-1)^{(i-1)n} f \circ_i g ~-~ (-1)^{mn} \sum_{i=1}^{n+1} (-1)^{(i-1)m} g \circ_i f,
\end{align}
for $f \in \mathcal{O}(m+1)$ and $g \in \mathcal{O}(n+1)$. If $(D, \prec, \succ)$ is a dendriform algebra, then the element $\pi \in \mathcal{O}(2)$ defined by 
\begin{align*}
\pi ([1];a,b) = a \prec b ~~~~ \text{ and } ~~~~ \pi ([2];a,b) = a \succ b 
\end{align*}
is a Maurer-Cartan element in the graded Lie algebra $(\mathcal{O}(\bullet +1) , \llceil ~, ~ \rrceil )$. Therefore, $\pi$ defines a differential $\delta_\pi (f ) = (-1)^{n-1} \llceil \pi, f \rrceil$, for $f \in \mathcal{O}(n)$. The corresponding cohomology groups are called the cohomology of the dendriform algebra $(D, \prec, \succ)$ with coefficients in itself. We denote the cohomology groups by $H^\bullet_{\mathrm{dend}} (D,D).$

\medskip

Let $(R,S)$ be a generalized Rota-Baxter system on an $A$-bimodule $M$. Consider the corresponding dendriform algebra $(M, \prec, \succ)$ given in Proposition \ref{RBS-dend}. Let $\pi \in \mathrm{Hom}(\mathbb{K}[C_2] \otimes M^{\otimes 2}, M)$ denotes the corresponding Maurer-Cartan element, i.e.
\begin{align*}
\pi_M ( [1]; u,v ) = u \cdot S(v) ~~~~ \text{ and } ~~~~ \pi_M ([2]; u,v) = R(u) \cdot v, ~ \text{ for } u,v \in M.
\end{align*}
We define a collection of maps
\begin{align*}
\Theta_n : \mathrm{Hom} (M^{\otimes n}, A \oplus A ) \rightarrow \mathrm{Hom}( \mathbb{K}[C_{n+1}] \otimes M^{\otimes n+1}, M) ~~~\text{ by }
\end{align*}
\begin{align}\label{theta-def}
(\Theta_n (P, Q )) ([r]; u_1, u_2, \ldots, u_{n+1}) = \begin{cases} (-1)^{n+1} ~u_1 \cdot Q(u_2, \ldots u_{n+1})  ~~~& \text{if }~ r = 1 \\
0 ~~~& \text{if } ~2 \leq r \leq n \\
P(u_1, \ldots, u_n ) \cdot u_{n+1} ~~~& \text{if }~ r = n + 1. \end{cases}
\end{align}

We have the following lemma which is crucial for the next theorem.
\begin{lemma}
The collection $\{ \Theta_n \}$ of maps preserve the respective graded Lie brackets, i.e.
\begin{align*}
\llceil \Theta_m (P,Q), \Theta_n (P',Q') \rrceil = \Theta_{m+n} \llbracket (P,Q), (P',Q') \rrbracket,
\end{align*} 
for $(P,Q) \in \mathrm{Hom}(M^{\otimes m}, A \oplus A)$ and $(P', Q') \in \mathrm{Hom}(M^{\otimes n}, A \oplus A).$
\end{lemma}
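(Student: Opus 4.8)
The plan is to verify the identity by a direct computation, evaluating both sides on an arbitrary element $([s]; u_1, \ldots, u_{m+n+1})$ of $\mathbb{K}[C_{m+n+1}] \otimes M^{\otimes m+n+1}$ and comparing. The whole argument is driven by the \emph{concentration} property built into (\ref{theta-def}): the cochain $\Theta_k(P,Q) \in \mathcal{O}(k+1)$ is supported only on the two extreme labels, namely $\Theta_k(P,Q)([1]; \ldots) = (-1)^{k+1}\, u_1 \cdot Q(\ldots)$ and $\Theta_k(P,Q)([k+1]; \ldots) = P(\ldots) \cdot u_{k+1}$, and vanishes on $[2], \ldots, [k]$. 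Expanding $\llceil \Theta_m(P,Q), \Theta_n(P',Q')\rrceil$ via (\ref{llceil-br}) and each partial composition via (\ref{dend-partial-comp}), this sparsity forces almost every summand to vanish: inspecting the three branches of (\ref{dend-partial-comp}) shows that a composition $\Theta_m(P,Q)\circ_i\Theta_n(P',Q')$ (and likewise with the factors reversed) can be nonzero only at the output labels $s \in \{1, n+1, m+1, m+n+1\}$, according to whether the surviving $\Theta$-components are the $Q$- or the $P$-parts.

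First I would treat the top label $s = m+n+1$, which on the right-hand side equals $\mathrm{pr}_1\llbracket(P,Q),(P',Q')\rrbracket(u_1, \ldots, u_{m+n}) \cdot u_{m+n+1}$ by (\ref{theta-def}). The surviving compositions here are exactly those in which the outer factor contributes its $P$-component (the top label) and the inner factor occupies either an interior or the last slot. The two ``product'' terms $P(\cdots)P'(\cdots)$ and $P'(\cdots)P(\cdots)$ appearing in (\ref{der-pr1}) are produced by the nested compositions through the bimodule identity $a \cdot (b \cdot u) = (ab)\cdot u$, which converts the iterated left action into an algebra product; the remaining terms of (\ref{der-pr1}), in which $P'(\cdots)\cdot u$ or $u\cdot Q'(\cdots)$ is inserted into an argument of $P$, match the remaining compositions directly. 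A parallel computation at the label $s = 1$ reproduces $(-1)^{m+n+1}\, u_1 \cdot \mathrm{pr}_2\llbracket(P,Q),(P',Q')\rrbracket(u_2, \ldots, u_{m+n+1})$, the ``product'' terms now arising through the mirror identity $(u \cdot a)\cdot b = u \cdot (ab)$.

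It then remains to check the interior labels $2 \leq s \leq m+n$, where the right-hand side vanishes by (\ref{theta-def}). By the concentration analysis the only possible contributions occur at $s = n+1$ and $s = m+1$. At $s = n+1$, for instance, one summand comes from $\Theta_m(P,Q)\circ_1\Theta_n(P',Q')$ and yields $(-1)^{m+1}\big(P'(u_1, \ldots, u_n)\cdot u_{n+1}\big)\cdot Q(u_{n+2}, \ldots, u_{m+n+1})$, while the reversed sum contributes $-(-1)^{m+1}\,P'(u_1, \ldots, u_n)\cdot\big(u_{n+1}\cdot Q(u_{n+2}, \ldots, u_{m+n+1})\big)$; these cancel precisely because of the middle bimodule axiom $(a\cdot u)\cdot b = a\cdot(u\cdot b)$. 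The same cancellation disposes of $s = m+1$.

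The main obstacle is bookkeeping rather than conceptual: one must match the operadic signs $(-1)^{(i-1)n}$, $(-1)^{(j-1)m}$ and $(-1)^{mn}$ from (\ref{llceil-br}) against the Gerstenhaber-type signs $(-1)^{(i-1)n}$, $(-1)^{in}$, $(-1)^{mn}$ in (\ref{der-pr1})--(\ref{der-pr2}) together with the factors $(-1)^{m+1}$, $(-1)^{n+1}$, $(-1)^{m+n+1}$ coming from (\ref{theta-def}), and confirm that every surviving composition lands with the correct sign. Some care is also needed in the degenerate case $m = n$, where the labels $n+1$ and $m+1$ coincide and the interior contributions must be collected together before they cancel; the cancellation still proceeds through the identity $(a\cdot u)\cdot b = a\cdot(u\cdot b)$.
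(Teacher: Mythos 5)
Your proposal is correct and follows essentially the same route as the paper: a direct label-by-label evaluation of both sides, driven by the concentration of $\Theta_k(P,Q)$ on the extreme labels $[1]$ and $[k+1]$, with the product terms $P(\cdots)P'(\cdots)$ and $Q(\cdots)Q'(\cdots)$ produced by the bimodule axioms. In fact you are more careful than the paper at the interior labels, where the paper simply asserts vanishing while your argument correctly identifies the genuinely nonzero contributions at $[n+1]$ and $[m+1]$ that must cancel in pairs via $(a\cdot u)\cdot b = a\cdot(u\cdot b)$.
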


\begin{proof}
For $u_0, u_1, \ldots, u_{m+n} \in M$, we have from (\ref{llceil-br}) that
\begin{align*}
&\llceil \Theta_m (P, Q), \Theta_n (P',Q') \rrceil ([1]; u_0, u_1, \ldots, u_{m+n}) \\
&= \bigg( \sum_{i=1}^{m+1} (-1)^{(i-1)n} \Theta_m (P,Q) \circ_i \Theta_n (P',Q') ~-~ (-1)^{mn} \sum_{i=1}^{n+1} (-1)^{(i-1)m} \Theta_n (P',Q') \circ_i \Theta_m (P,Q) \bigg) ([1]; u_0, \ldots, u_{m+n}) \\
&= \Theta_m (P,Q) ( [1]; \Theta_n (P',Q') ([1]; u_0, \ldots, u_n), u_{n+1}, \ldots, u_{m+n} )\\
&+ \sum_{i=1}^m (-1)^{in} \Theta_m (P,Q) ([1]; u_0, \ldots, u_{i-1}, \Theta_n (P',Q') ([1] + \cdots + [n+1]; u_i, \ldots, u_{i+n}), u_{i+n+1}, \ldots, u_{m+n}) \\
&- (-1)^{mn} \big\{   \Theta_n (P',Q') ( [1]; \Theta_m (P,Q) ([1]; u_0, \ldots, u_m), u_{m+1}, \ldots, u_{m+n} )\\
&+ \sum_{i=1}^n (-1)^{im} \Theta_n (P',Q') ([1]; u_0, \ldots, u_{i-1}, \Theta_m (P,Q) ([1] + \cdots + [m+1]; u_i, \ldots, u_{i+m}), u_{i+m+1}, \ldots, u_{m+n})  \big\}\\
%&= (-1)^{m+n+1} ~ u_0 (\llbracket (P,Q), (P',Q') \rrbracket (u_1, \ldots, u_{m+n})) \\
&= (\Theta_{m+n} \llbracket (P, Q),(P',Q') \rrbracket)([1]; u_0, u_1, \ldots, u_{m+n}).
\end{align*}
Similarly, for $u_1, \ldots, u_{m+n+1} \in M$,
\begin{align*}
&\llceil \Theta_m (P,Q), \Theta_n (P',Q') \rrceil ([m+n+1]; u_1, \ldots, u_{m+n+1}) \\
&= \bigg( \sum_{i=1}^{m+1} (-1)^{(i-1)n} \Theta_m (P,Q) \circ_i \Theta_n (P',Q') ~-~ (-1)^{mn} \sum_{i=1}^{n+1} (-1)^{(i-1)m} \Theta_n (P',Q') \circ_i \Theta_m (P,Q) \bigg) \\& \hspace*{10cm} \hfill{([m+n+1]; u_1, \ldots, u_{m+n+1})}\\
&= \sum_{i=1}^m (-1)^{(i-1)n} \Theta_m (P,Q) ( [m+1] ; u_1, \ldots, u_{i-1}, \Theta_n (P',Q') ([1] + \cdots + [n+1]; u_i, \ldots, u_{i+n}), \ldots, u_{m+n+1})\\
&+ (-1)^{mn}  \Theta_m (P,Q) ([m+1]; u_1, \ldots, u_m, \Theta_n (P',Q') ([n+1]; u_{m+1}, \ldots, u_{m+n+1})) \\
&- (-1)^{mn} \big\{    \sum_{i=1}^n (-1)^{(i-1)m} \Theta_n (P',Q') ( [n+1] ; u_1, \ldots, u_{i-1}, \Theta_m (P,Q) ([1] + \cdots + [m+1]; u_i, \ldots, u_{i+m}), \ldots, u_{m+n+1})\\
&+ (-1)^{mn}  \Theta_n (P',Q') ([n+1]; u_1, \ldots, u_n, \Theta_m (P,Q) ([m+1]; u_{n+1}, \ldots, u_{m+n+1})) \big\} \\
%&= (\llbracket P, Q \rrbracket (u_1, \ldots, u_{m+n})) u_{m+n+1} \\
&= (\Theta_{m+n} \llbracket P, Q \rrbracket ) ([m+n+1]; u_1, \ldots, u_{m+n+1}).
\end{align*}
For $2 \leq r \leq m+n$, it follows from (\ref{llceil-br}) and partial compositions (\ref{dend-partial-comp}) that 
\begin{align*}
\llceil \Theta_m (P,Q), \Theta_n (P',Q') \rrceil ([r]; u_0, \ldots, u_{m+n}) = 0 =  (\Theta_{m+n} \llbracket (P, Q),(P',Q') \rrbracket ) ([r]; u_0, \ldots, u_{m+n}).
\end{align*}
Hence the result follows.
\end{proof}

\begin{thm}
Let $(R,S)$ be a generalized Rota-Baxter system on $M$ over the algebra $A$. Then the collection of maps $\{ \Theta_n \}$ induces a morphism $\Theta_\bullet : H^\bullet (M,A) \rightarrow H^{\bullet +1}_{\mathrm{dend}} (M,M)$ from the cohomology of the generalized Rota-Baxter system $(R,S)$ to the cohomology of the corresponding dendriform algebra $M$.
\end{thm}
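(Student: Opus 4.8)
The plan is to show that the maps $\{\Theta_n\}$ assemble into a chain map (up to an overall nonzero sign in each degree) from the cochain complex $(C^\bullet(M,A), d_{(R,S)})$ computing $H^\bullet(M,A)$ to the shifted dendriform complex $(\mathcal{O}(\bullet+1), \delta_\pi)$ computing $H^{\bullet+1}_{\mathrm{dend}}(M,M)$. Once the intertwining relation is established, the descent to cohomology is automatic.

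First I would observe that the generalized Rota-Baxter system $(R,S) \in C^1(M,A)$ is sent by $\Theta_1$ to the Maurer-Cartan element $\pi$ defining the induced dendriform structure. Indeed, evaluating the definition (\ref{theta-def}) at $n=1$ gives $\Theta_1(R,S)([1];u_1,u_2) = u_1\cdot S(u_2)$ and $\Theta_1(R,S)([2];u_1,u_2) = R(u_1)\cdot u_2$, which are precisely $\pi_M([1];u_1,u_2)$ and $\pi_M([2];u_1,u_2)$ coming from Proposition \ref{RBS-dend}. Hence $\Theta_1(R,S)=\pi$.

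Next I would feed this identity into the preceding lemma. Taking $(P,Q)=(R,S)$ (so $m=1$) and an arbitrary $(P',Q')\in\mathrm{Hom}(M^{\otimes n}, A\oplus A)$, the lemma yields
\begin{align*}
\Theta_{n+1}\big(d_{(R,S)}(P',Q')\big) = \Theta_{n+1}\llbracket (R,S), (P',Q')\rrbracket = \llceil \pi, \Theta_n(P',Q')\rrceil.
\end{align*}
Since $\Theta_n(P',Q')\in\mathcal{O}(n+1)$ and the dendriform differential is $\delta_\pi(f) = (-1)^{k-1}\llceil \pi, f\rrceil$ on $\mathcal{O}(k)$, this rewrites as the intertwining relation $\delta_\pi\circ\Theta_n = (-1)^n\,\Theta_{n+1}\circ d_{(R,S)}$.

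Finally, this relation delivers the conclusion directly. If $(P,Q)$ is a $d_{(R,S)}$-cocycle, then $\delta_\pi(\Theta_n(P,Q)) = (-1)^n\Theta_{n+1}(0)=0$, so $\Theta_n(P,Q)$ is a dendriform cocycle; and if $(P,Q)=d_{(R,S)}(x)$ with $x\in C^{n-1}(M,A)$, then reading the same relation one degree lower gives $\Theta_n(P,Q) = (-1)^{n-1}\delta_\pi(\Theta_{n-1}(x))$, a dendriform coboundary. Therefore $\Theta_\bullet$ is well defined on cohomology classes and produces the desired morphism $\Theta_\bullet : H^\bullet(M,A)\to H^{\bullet+1}_{\mathrm{dend}}(M,M)$. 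The only delicate points are the degree shift by one (from $\mathcal{O}(n+1)$) and the constant sign $(-1)^n$; since neither affects the cocycle or coboundary conditions, there is no genuine obstacle here, the substantive work having already been carried out in the bracket-preserving lemma.
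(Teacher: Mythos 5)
Your proposal is correct and follows essentially the same route as the paper: identify $\Theta_1(R,S)=\pi_M$, specialize the bracket-preserving lemma to $m=1$ to obtain the (sign-twisted) chain-map relation $\delta_{\pi_M}\circ\Theta_n = (-1)^n\,\Theta_{n+1}\circ d_{(R,S)}$, and conclude on cohomology. The paper states this as a commuting square and leaves the cocycle/coboundary bookkeeping implicit, which you spell out; the signs and the degree shift match the paper's conventions exactly.
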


\begin{proof}
It follows from  (\ref{theta-def}) that $\Theta_1 (R,S) = \pi_M$. Therefore, by the above lemma, the following diagram commutes
\[
\xymatrix{
\mathrm{Hom}(M^{\otimes n }, A) \oplus \mathrm{Hom}(M^{\otimes n }, A) \ar[r]^{ (-1)^n \llbracket (R,S), ~ \rrbracket } \ar[d]_{\Theta_n} &  \mathrm{Hom}(M^{\otimes n+1 }, A) \oplus \mathrm{Hom}(M^{\otimes n+1 }, A)  \ar[d]^{\Theta_{n+1}}\\
\mathrm{Hom}(\mathbb{K}[C_{n+1}] \otimes M^{\otimes n+1 }, M) \ar[r]_{\delta_{\pi_M}} &  \mathrm{Hom}(\mathbb{K}[C_{n+2}] \otimes M^{\otimes n+2 }, M).
}
\]
Hence the result follows.
\end{proof}

\section{Deformations of generalized Rota-Baxter systems}\label{sec-4}
Formal deformation theory of associative Rota-Baxter operators was studied by the author in \cite{das2} along the line of Gerstenhaber \cite{gers}. In this section, we study formal one-parameter deformations of generalized Rota-Baxter systems.

Let $A$ be an associative algebra. Consider the space $A [[t]]$ of formal power series in $t$ with coefficients from $A$. The associative multiplication of $A$ induces an associative product on $A[[t]]$ by $\mathbb{K}[[t]]$-bilinearity. Moreover, if $M$ is an $A$-bimodule then $M[[t]]$ is an $A[[t]]$-bimodule by $\mathbb{K}[[t]]$-bilinearity.

\begin{defn}
Let $(R, S)$ be a generalized Rota-Baxter system on $M$ over the algebra $A$. A formal one-parameter deformation of $(R, S)$ consists of formal sums
\begin{align*}
R_t = R_0 + t R_1 + t^2 R_2 + \cdots ~\in \mathrm{Hom}( M, A) [[t]] \qquad S_t = S_0 + t S_1 + t^2 S_2 + \cdots ~\in \mathrm{Hom}( M, A) [[t]]
\end{align*}
with $R_0 = R, ~ S_0 = S$ and such that $(R_t, S_t)$ is a generalized Rota-Baxter system on $M[[t]]$ over the algebra $A[[t]]$. 
\end{defn}

Thus the followings are hold
\begin{align*}
R_t (u) R_t (v) = R_t  ( R_t(u) \cdot v + u \cdot S_t (v) )  ~~~ \text{ and } ~~~
S_t (u) S_t (v) = S_t  ( R_t(u) \cdot v + u \cdot S_t (v)).
\end{align*}
By expanding these equations and comparing coefficients of various powers of $t$, we obtain for $n \geq 0$,
\begin{align}
\sum_{i+j = n} R_i (u) R_j (v) =~& \sum_{i+j = n} R_i ( R_j (u) \cdot v + u \cdot S_j (v)), \label{def-eqn-1} \\
\sum_{i+j = n} S_i (u) S_j (v) =~& \sum_{i+j = n} S_i ( R_j (u) \cdot v + u \cdot S_j (v)). \label{def-eqn-2}
\end{align}
Both of these identities hold for $n = 0$ as $(R, S)$ is a generalized Rota-Baxter system. However, for $n=1$, we obtain
\begin{align*}
R(u) R_1 (v) + R_1 (u) R(v) =~& R ( R_1 (u) \cdot v + u \cdot S_1(v)) + R_1 ( R(u) \cdot v + u  \cdot S(v)),\\
S(u) S_1 (v) + S_1 (u) S(v) =~& S ( R_1 (u) \cdot v + u \cdot S_1(v)) + S_1 ( R(u) \cdot v + u \cdot S(v)).
\end{align*}
These identities are equivalent to
\begin{align*}
\llbracket (R, S), (R_1, S_1) \rrbracket = 0.
\end{align*}
In other words, the pair $(R_1, S_1)$ is a $1$-cocycle in the cohomology of the generalized Rota-Baxter system $(R, S)$. It is called the infinitesimal of the deformation. In general, if $(R_1, S_1) = \cdots = (R_{k-1}, S_{k-1}) = 0$ then $(R_k, S_k)$ is a $1$-cocycle in the cohomology of $(R,S)$.

\begin{defn} (Equivalence) Let $(R_t, S_t)$ and $(R_t', S_t')$ be two formal one-parameter deformations of a generalized Rota-Baxter system $(R,S)$. They are said to be equivalent if there exists $a, b \in A$; linear maps $\phi_i, \varphi_i \in \mathrm{End}(A)$ and $\psi_i \in \mathrm{End}(M)$ for $i \geq 2$ such that
\begin{align*}
\big( \phi_t = \mathrm{id}_A + t (l^\mathrm{ad}_a - r^\mathrm{ad}_a) + \sum_{i \geq 2} t^i \phi_i,~
  \varphi_t = \mathrm{id}_A + t (l^\mathrm{ad}_b - r^\mathrm{ad}_b) + \sum_{i \geq 2} t^i \varphi_i,~
  \psi_t = \mathrm{id}_M + t (l_a - r_b) + \sum_{i \geq 2} t^i \psi_i \big)
\end{align*}
is a morphism of generalized Rota-Baxter systems from $(R_t, S_t)$ to $(R_t', S_t')$.
\end{defn}

Thus it follows that the conditions of Definition \ref{grbs-mor} hold. In particular, we have $R_t' \circ \psi_t = \phi_t \circ R_t$ and $S_t' \circ \psi_t = \varphi_t \circ S_t$. Comparing coefficients of $t$ in both identities, we get
\begin{align*}
R_1 (u) - R_1' (u) =~& R(u) a - R(u \cdot b) - a  R(u) + R (a \cdot u) = \mathrm{pr}_1 (\delta_{\mathrm{Hoch}} (a,b) (u)),\\
S_1 (u) - S_1' (u) =~& S(u) b - S(u \cdot b) - b  S(u) + S (a \cdot u) = \mathrm{pr}_2 (\delta_{\mathrm{Hoch}} (a,b) (u)).
\end{align*}
Thus, we have $(R_1, S_1)(u) - (R_1', S_1')(u) = (\delta_{\mathrm{Hoch}} (a,b)) (u)$.
In summary, we obtain the following.

\begin{thm}
The infinitesimal of a deformation of a generalized Rota-Baxter system is a $1$-cocycle and the corresponding cohomology class depends only on the equivalence class of the deformation.
\end{thm}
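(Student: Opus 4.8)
The plan is to assemble the two computations that immediately precede the statement. For the cocycle assertion I would start from the deformation equations (\ref{def-eqn-1}) and (\ref{def-eqn-2}) and isolate the coefficient of $t^1$. As already recorded above, this order-one identity is exactly $\llbracket (R,S),(R_1,S_1)\rrbracket = 0$. Since, by Theorem \ref{thm-rbs-mc} and the discussion following it, the differential governing the cohomology of the generalized Rota-Baxter system $(R,S)$ is $d_{(R,S)} = \llbracket (R,S),~\rrbracket$, this says precisely that $(R_1,S_1)\in C^1(M,A)$ is a $1$-cocycle. The same argument applied to the lowest nonvanishing term shows that whenever $(R_1,S_1)=\cdots=(R_{k-1},S_{k-1})=0$, the term $(R_k,S_k)$ is again a $1$-cocycle.

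For the second assertion I would take two equivalent deformations $(R_t,S_t)$ and $(R_t',S_t')$ together with an equivalence $(\phi_t,\varphi_t,\psi_t)$ as in the definition. The key input is the pair of morphism conditions $R_t'\circ\psi_t=\phi_t\circ R_t$ and $S_t'\circ\psi_t=\varphi_t\circ S_t$ of Definition \ref{grbs-mor}(i). Substituting the prescribed linear parts $\phi_t=\mathrm{id}_A+t(l^{\mathrm{ad}}_a-r^{\mathrm{ad}}_a)+\cdots$, $\varphi_t=\mathrm{id}_A+t(l^{\mathrm{ad}}_b-r^{\mathrm{ad}}_b)+\cdots$ and $\psi_t=\mathrm{id}_M+t(l_a-r_b)+\cdots$, and comparing the coefficient of $t$, yields
\[
(R_1,S_1)(u)-(R_1',S_1')(u) = \big(\delta_{\mathrm{Hoch}}(a,b)\big)(u).
\]
Since $(a,b)$ lies in $C^0(M,A)=A\oplus A$, and since $d_{(R,S)}$ agrees with $(-1)^0\,\delta_{\mathrm{Hoch}}=\delta_{\mathrm{Hoch}}$ on $0$-cochains, the right-hand side is exactly the coboundary $d_{(R,S)}(a,b)$. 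Hence $(R_1,S_1)$ and $(R_1',S_1')$ differ by a coboundary and therefore define the same class in $H^1(M,A)$, which is the equivalence-invariance claim.

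The only genuinely delicate point is the bookkeeping in the second part: one must check that the explicit $t$-linear contributions of the three maps $\phi_t,\varphi_t,\psi_t$, built from the adjoint actions $l^{\mathrm{ad}}_a,r^{\mathrm{ad}}_a$ and the module actions $l_a,r_b$, reassemble termwise into the four summands $R(u)a-R(u\cdot b)-aR(u)+R(a\cdot u)$ (and the corresponding $S$-terms) that constitute $\mathrm{pr}_1(\delta_{\mathrm{Hoch}}(a,b))$ and $\mathrm{pr}_2(\delta_{\mathrm{Hoch}}(a,b))$ under the bimodule representation of $(M,*)$ on $A\oplus A$ from Proposition \ref{prop-hoch}. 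This is where the signs and the placement of $a$ versus $b$ must be tracked carefully. I expect it to be a direct termwise matching rather than a substantive obstacle; once it is in place, both conclusions follow at once.
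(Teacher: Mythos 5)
Your proposal is correct and takes essentially the same route as the paper: the paper likewise extracts the $t^1$-coefficient of the deformation equations (\ref{def-eqn-1})--(\ref{def-eqn-2}) to obtain $\llbracket (R,S),(R_1,S_1)\rrbracket=0$, and then compares the $t$-coefficients of $R_t'\circ\psi_t=\phi_t\circ R_t$ and $S_t'\circ\psi_t=\varphi_t\circ S_t$ to conclude $(R_1,S_1)-(R_1',S_1')=\delta_{\mathrm{Hoch}}(a,b)$, a coboundary since $d_{(R,S)}=(-1)^0\delta_{\mathrm{Hoch}}$ on $C^0(M,A)=A\oplus A$. The termwise bookkeeping you flag at the end is exactly what the paper carries out explicitly, arriving at $R_1(u)-R_1'(u)=R(u)a-R(u\cdot b)-aR(u)+R(a\cdot u)$ and the analogous identity for $S$.
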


%\begin{defn}
%A generalized Rota-Baxter system $(R, S)$ is said to be rigid if every deformation $(R_t, S_t)$ is equivalent to the undeformed one $(R_t' = R, S_t' = S).$ 
%\end{defn}

%\vspace*{2cm}

The following result relates the deformation of a generalized Rota-Baxter system and the deformation of the dendriform structure in the sense of \cite{das1}.

\begin{prop}
Let $(R_t, S_t)$ be a formal one-parameter deformation of a generalized Rota-Baxter system $(R,S)$. Then $(\prec_t, \succ_t)$ defines a deformation of the induced dendriform algebra $(M, \prec, \succ)$ given in Proposition \ref{RBS-dend}, where $u \prec_t v = \sum_{i=0}^\infty t^i (u S_i(v))$ and $u \succ_t v = \sum_{i=0}^\infty t^i (R_i(u) v)$, for $u,v \in M.$
\end{prop}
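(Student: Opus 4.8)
The plan is to verify directly that the formal sums $\prec_t$ and $\succ_t$ satisfy, modulo $t^n$ at each order, the three dendriform axioms, and that they do so precisely because $(R_t, S_t)$ is a generalized Rota-Baxter system on $M[[t]]$ over $A[[t]]$. The key observation is that Proposition \ref{RBS-dend} is not a statement that needs to be reproved at each order: it is an \emph{identity} of functors. For any generalized Rota-Baxter system $(\widetilde{R}, \widetilde{S})$ on an $A$-bimodule $M$, the operations $u \prec v = u \cdot \widetilde{S}(v)$ and $u \succ v = \widetilde{R}(u) \cdot v$ automatically define a dendriform structure. Applying this over the ground ring $\mathbb{K}[[t]]$ to the generalized Rota-Baxter system $(R_t, S_t)$ on $M[[t]]$ over $A[[t]]$, we immediately obtain that
\begin{align*}
u \prec_t v = u \cdot S_t(v) = \sum_{i=0}^\infty t^i (u \cdot S_i(v)) \qquad \text{and} \qquad u \succ_t v = R_t(u) \cdot v = \sum_{i=0}^\infty t^i (R_i(u) \cdot v)
\end{align*}
form a dendriform algebra structure on $M[[t]]$, extending $(\prec, \succ)$ $\mathbb{K}[[t]]$-bilinearly.

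First I would record that at $t = 0$ the pair $(\prec_0, \succ_0)$ reduces to $(\prec, \succ)$, the original dendriform structure of Proposition \ref{RBS-dend}, since $R_0 = R$ and $S_0 = S$. This establishes that $(\prec_t, \succ_t)$ is an honest formal deformation in the sense that it is a $\mathbb{K}[[t]]$-bilinear extension reducing to the given structure at order zero. Next I would invoke the functoriality just described: because $(R_t, S_t)$ satisfies the defining identities \eqref{rb-system1} and \eqref{rb-system2} over $A[[t]]$, Proposition \ref{RBS-dend} applied verbatim over the ring $\mathbb{K}[[t]]$ yields all three dendriform axioms for $(\prec_t, \succ_t)$. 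There is nothing order-by-order to check, since the proof of Proposition \ref{RBS-dend} uses only the bimodule axioms and the two Rota-Baxter identities, all of which hold in the $\mathbb{K}[[t]]$-linear setting.

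The only genuine obstacle is bookkeeping about the notion of deformation in \cite{das1}: one must confirm that the relevant definition of a formal deformation of a dendriform algebra is exactly a pair of $\mathbb{K}[[t]]$-bilinear operations $\prec_t = \sum_i t^i \prec_i$ and $\succ_t = \sum_i t^i \succ_i$ reducing to $(\prec, \succ)$ at $t = 0$ and satisfying the dendriform axioms over $\mathbb{K}[[t]]$. Granting that, the statement follows. I would therefore close by noting that the stated coefficientwise formulas $u \prec_i v = u \cdot S_i(v)$ and $u \succ_i v = R_i(u) \cdot v$ are precisely the components obtained by expanding $S_t$ and $R_t$, so $(\prec_t, \succ_t)$ is the deformation induced by $(R_t, S_t)$, as claimed.
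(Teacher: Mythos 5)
Your proof is correct. The paper states this proposition without any proof, treating it as an immediate consequence of Proposition \ref{RBS-dend}; your argument --- base-changing to $\mathbb{K}[[t]]$ and applying Proposition \ref{RBS-dend} verbatim to the generalized Rota-Baxter system $(R_t,S_t)$ on $M[[t]]$ over $A[[t]]$, then noting $R_0=R$, $S_0=S$ and expanding coefficientwise --- is exactly the justification the paper leaves implicit, so there is nothing to compare beyond that.
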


\subsection{Extensions of finite order deformations}
Let $(R, S)$ be a generalized Rota-Baxter system on $M$ over the algebra $A$. Consider the space $A [[t]] / (t^{N+1})$ that inherits an associative algebra structure over $\mathbb{K}[[t]] /(t^{N+1})$ and $M[[t]] / (t^{N+1})$ is a bimodule over it.

\begin{defn}
A deformation of order $N$ of the generalized Rota-Baxter system  $(R, S)$ consist of finite sums $R_t = \sum_{i=0}^N t^i R_i $ (with $R_0 = R$) and $S_t =  \sum_{i=0}^N t^i S_i$ (with $S_0 = S$) such that $(R_t, S_t)$ is a generalized Rota-Baxter system on $M[[t]] / (t^{N+1})$ over the algebra $A [[t]] / (t^{N+1})$.
\end{defn}

Thus in a deformation of order $N$, the identities (\ref{def-eqn-1}) and (\ref{def-eqn-2}) hold for $n=0, 1, \ldots, N$. These identities can be equivalently expressed as 
\begin{align}\label{N-def}
\llbracket (R, S), (R_n, S_n ) \rrbracket = - \frac{1}{2} \sum_{i+j = n, i, j \geq 1} \llbracket (R_i, S_i), (R_j, S_j) \rrbracket, ~ \text{ for } n = 0, 1, \ldots, N.
\end{align}
\begin{defn}
A deformation $(R_t, S_t)$ of order $N$ is said to be extensible if there exists a pair $(R_{N+1}, S_{N+1})$ of linear maps from $M$ to $A$ such that $(R_t' = R_t + t^{N+1} R_{N+1}, S_t' = S_t + t^{N+1} S_{N+1} )$ is a deformation of order $N+1$.
\end{defn}

In such a case, one more deformation needs to be satisfied, namely,
\begin{align*}
\llbracket (R, S), (R_{N+1}, S_{N+1} ) \rrbracket = - \frac{1}{2} \sum_{i+j = N+1, i, j \geq 1} \llbracket (R_i, S_i), (R_j, S_j) \rrbracket.
\end{align*}
Note that on the right hand side of the above equation does not involve $R_{N+1}$ or $S_{N+1}$. Hence it depends only on the order $N$ deformation $(R_t, S_t)$. It is called the `obstruction' to extend the deformation $(R_t, S_t)$ and is denoted by $Ob_{(R_t, S_t)}$.

\begin{prop}
The obstruction $Ob_{(R_t, S_t)}$ is a $2$-cocycle in the cohomology of the generalized Rota-Baxter system $(R, S)$.
\end{prop}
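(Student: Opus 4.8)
The plan is to show that $Ob_{(R_t, S_t)}$ is both a cocycle, i.e.\ $d_{(R,S)} \, Ob_{(R_t, S_t)} = \llbracket (R,S), Ob_{(R_t,S_t)} \rrbracket = 0$, where the obstruction is the element
\begin{align*}
Ob_{(R_t, S_t)} = - \frac{1}{2} \sum_{i+j = N+1,\, i, j \geq 1} \llbracket (R_i, S_i), (R_j, S_j) \rrbracket \in C^2(M, A).
\end{align*}
The entire argument rests on the graded Jacobi identity for the bracket $\llbracket ~, ~ \rrbracket$ established in Theorem \ref{thm-rbs-mc}, together with the order $N$ deformation equations \eqref{N-def}. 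First I would apply $d_{(R,S)} = \llbracket (R,S), ~ \rrbracket$ to the defining expression and use the graded Jacobi identity to move the outer bracket past the summation, converting each term $\llbracket (R,S), \llbracket (R_i, S_i), (R_j, S_j) \rrbracket \rrbracket$ into a sum of nested brackets of the form $\llbracket \llbracket (R,S), (R_i, S_i) \rrbracket, (R_j, S_j) \rrbracket$. At this point the inner bracket $\llbracket (R,S), (R_i, S_i) \rrbracket$ is exactly the object controlled by the lower-order deformation equations in \eqref{N-def}, so each such term can be rewritten as a triple sum of brackets involving only $(R_i, S_i)$ with $i \geq 1$.

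The key step is the substitution using \eqref{N-def}. After replacing $\llbracket (R,S), (R_i, S_i) \rrbracket$ by $-\tfrac{1}{2}\sum_{p+q=i,\, p,q\geq 1} \llbracket (R_p, S_p), (R_q, S_q) \rrbracket$, one obtains a sum over index triples (or quadruples) with total weight $N+1$ of brackets of the shape $\llbracket \llbracket (R_p, S_p), (R_q, S_q) \rrbracket, (R_j, S_j) \rrbracket$. The graded Jacobi identity then forces this total sum to vanish: the terms organize into cyclic triples that cancel in pairs. Concretely, for each fixed triple of indices the three cyclic permutations appearing in the graded Jacobi identity sum to zero, and since the summation over all index triples of total weight $N+1$ is symmetric under permuting the three factors, every Jacobi triple is accounted for and the whole expression collapses to $0$.

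The main obstacle I expect is purely bookkeeping with signs and index ranges, not conceptual. Since all the relevant elements $(R_i, S_i)$ lie in $C^1(M, A)$, they have the same (odd) degree in the graded Lie algebra, so the graded Jacobi identity and graded antisymmetry specialize to fixed sign conventions; one must verify that the signs produced when invoking Jacobi exactly match those needed for the cyclic cancellation, and that the factor of $\tfrac{1}{2}$ from the obstruction together with the factors of $\tfrac{1}{2}$ from the substituted equations combine correctly to leave no leftover terms. I would carry out the sign computation carefully for the degree-one case, where the relevant signs are constant, and then confirm that the triple sum is manifestly cyclically symmetric in its three arguments so that the graded Jacobi relation annihilates it. Once $d_{(R,S)} \, Ob_{(R_t, S_t)} = 0$ is verified, the obstruction class $[Ob_{(R_t, S_t)}] \in H^2(M, A)$ is well defined, and the extensibility criterion follows: the order $N$ deformation extends to order $N+1$ precisely when $Ob_{(R_t, S_t)}$ is a coboundary, i.e.\ when its cohomology class vanishes.
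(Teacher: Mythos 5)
Your proposal is correct and follows essentially the same route as the paper's proof: apply $d_{(R,S)} = \llbracket (R,S), \cdot \rrbracket$ to the obstruction, expand via the graded Jacobi identity, substitute the order-$N$ deformation equations (\ref{N-def}) for the inner brackets $\llbracket (R,S),(R_i,S_i)\rrbracket$, and observe that the resulting triple sum $\sum_{i+j+k=N+1}\llbracket\llbracket (R_i,S_i),(R_j,S_j)\rrbracket,(R_k,S_k)\rrbracket$ vanishes by Jacobi together with the symmetry of the summation. The sign and factor bookkeeping you flag works out exactly as you predict (the two factors of $\tfrac{1}{2}$ combine to $\tfrac{1}{4}$, then graded antisymmetry merges the two sums into $\tfrac{1}{2}$ of the symmetric triple sum), so there is no gap.
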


\begin{proof}
We have
\begin{align*}
d_{(R, S)} (  Ob_{(R_t, S_t)} ) =~& - \frac{1}{2} \sum_{i+j = N+1, i, j \geq 1} \llbracket (R, S), \llbracket (R_i, S_i), (R_j, S_j) \rrbracket \rrbracket \\
=~& - \frac{1}{2} \sum_{i+j = N+1, i, j \geq 1}   \big( \llbracket \llbracket (R, S), (R_i, S_i) \rrbracket, (R_j, S_j) \rrbracket - \llbracket (R_i, S_i ), \llbracket (R, S), (R_j, S_j) \rrbracket \rrbracket  \big)  \\
=~& \frac{1}{4} \sum_{i_1+i_2+j = N+1, i_1, i_2, j \geq 1} \llbracket \llbracket (R_{i_1}, S_{i_1}), (R_{i_2}, S_{i_2}) \rrbracket, (R_j, S_j) \rrbracket \\
&- \frac{1}{4} \sum_{i+j_1 +j_2 = N+1, i, j_1, j_2 \geq 1} \llbracket (R_i, S_i ), \llbracket (R_{j_1}, S_{j_1}), (R_{j_2}, S_{j_2}) \rrbracket \rrbracket    \quad (\text{by } (\ref{N-def})) \\
=~& \frac{1}{2} \sum_{i+j+k = N+1, i, j, k \geq 1} \llbracket \llbracket (R_{i}, S_{i}), (R_{j}, S_{j}) \rrbracket, (R_k, S_k) \rrbracket = 0.
\end{align*}
Hence the proof.
\end{proof}

This shows that the obstruction gives rise to a second cohomology class $[Ob_{(R_t, S_t)}] \in H^2 (M, A) $, called the obstruction class.

As a consequence of (\ref{N-def}) we obtain the following.
\begin{thm}
A deformation $(R_t, S_t)$ of order $N$ is extensible if and only if the obstruction class $[Ob_{(R_t, S_t)}] \in H^2 (M, A) $ is trivial.
\end{thm}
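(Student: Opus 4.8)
The plan is to characterize extensibility directly in terms of the single Maurer-Cartan equation that the putative extension must satisfy, and then to recognize its solvability as a cohomological condition. Recall that a deformation of order $N$ satisfies the identities \eqref{N-def} for $n = 0, 1, \ldots, N$, and that extending it to order $N+1$ amounts to producing a pair $(R_{N+1}, S_{N+1})$ with
\begin{align*}
\llbracket (R, S), (R_{N+1}, S_{N+1} ) \rrbracket = - \frac{1}{2} \sum_{i+j = N+1, i, j \geq 1} \llbracket (R_i, S_i), (R_j, S_j) \rrbracket = Ob_{(R_t, S_t)}.
\end{align*}
Since $d_{(R,S)} = \llbracket (R,S), ~\rrbracket$, this says precisely that $(R_{N+1}, S_{N+1})$ must be a $1$-cochain whose differential equals the obstruction, i.e. $d_{(R,S)}(R_{N+1}, S_{N+1}) = Ob_{(R_t, S_t)}$.

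First I would establish the forward direction. If the deformation is extensible, then by definition such a pair $(R_{N+1}, S_{N+1})$ exists with $d_{(R,S)}(R_{N+1}, S_{N+1}) = Ob_{(R_t, S_t)}$, exhibiting the obstruction $2$-cochain as a coboundary. Hence $[Ob_{(R_t, S_t)}] = 0$ in $H^2(M,A)$. This direction is essentially a restatement of the definitions together with the already-proved fact (the preceding Proposition) that $Ob_{(R_t, S_t)}$ is indeed a $2$-cocycle, so that its class is well-defined.

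Next I would prove the converse. Suppose $[Ob_{(R_t, S_t)}] = 0$. Since $Ob_{(R_t, S_t)}$ is a $2$-cocycle by the preceding Proposition, triviality of its class means $Ob_{(R_t, S_t)} = d_{(R,S)}(R_{N+1}, S_{N+1})$ for some $1$-cochain $(R_{N+1}, S_{N+1}) \in C^1(M,A) = \mathrm{Hom}(M, A \oplus A)$. I would then set $R_t' = R_t + t^{N+1} R_{N+1}$ and $S_t' = S_t + t^{N+1} S_{N+1}$ and verify that $(R_t', S_t')$ satisfies \eqref{N-def} for all $n = 0, 1, \ldots, N+1$. For $n \le N$ the relevant equations are unchanged since the new term contributes only in degree $t^{N+1}$ and higher; for $n = N+1$ the required identity is exactly $\llbracket (R,S), (R_{N+1}, S_{N+1})\rrbracket = Ob_{(R_t, S_t)}$, which holds by the choice of $(R_{N+1}, S_{N+1})$.

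I expect the main subtlety to be bookkeeping rather than conceptual: one must check carefully that adding the degree-$(N+1)$ term does not disturb the lower-order Maurer-Cartan constraints and that the only genuinely new constraint is the one solved by the cohomological hypothesis. This is a standard deformation-theoretic argument, and the content has been prepared entirely by the preceding Proposition (which shows $Ob_{(R_t, S_t)}$ is a cocycle) together with the Maurer-Cartan reformulation \eqref{N-def}; the proof is therefore short once those are in hand.
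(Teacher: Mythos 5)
Your proof is correct and is essentially the argument the paper intends: the paper states this theorem as an immediate consequence of (\ref{N-def}) together with the preceding proposition that $Ob_{(R_t,S_t)}$ is a $2$-cocycle, and your write-up simply makes that implicit argument explicit. Both directions — extensibility exhibiting $Ob_{(R_t,S_t)}$ as the coboundary $d_{(R,S)}(R_{N+1},S_{N+1})$, and conversely using a primitive of a trivial obstruction class as the order-$(N+1)$ term while noting that the equations for $n \le N$ are untouched — match the paper's approach exactly.
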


\begin{thm}
If $H^2(M, A) = 0$ then every finite order deformation of $(R, S)$ is extensible.
\end{thm}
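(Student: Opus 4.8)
The plan is to invoke the immediately preceding theorem, which states that a deformation of order $N$ is extensible if and only if the obstruction class $[Ob_{(R_t, S_t)}] \in H^2(M, A)$ is trivial. The present statement follows from this as a one-line corollary: if the entire group $H^2(M, A)$ vanishes, then in particular \emph{every} cohomology class in $H^2(M, A)$ is trivial, so the obstruction class attached to any finite order deformation is automatically zero.

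First I would let $(R_t, S_t)$ be an arbitrary finite order deformation of $(R, S)$, say of order $N$. By the proposition established just above, the obstruction $Ob_{(R_t, S_t)}$ is a $2$-cocycle in the cohomology of the generalized Rota-Baxter system $(R, S)$, and hence determines a well-defined class $[Ob_{(R_t, S_t)}] \in H^2(M, A)$. The hypothesis $H^2(M, A) = 0$ forces this class to be trivial, since the zero group has only the trivial element. Applying the extensibility criterion from the previous theorem then yields that $(R_t, S_t)$ is extensible to a deformation of order $N + 1$.

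Since $N$ was arbitrary, the conclusion holds for finite order deformations of every order, which is exactly the assertion. I do not expect any genuine obstacle here: all the real content — constructing the obstruction cocycle and proving it is closed, then showing that extensibility is equivalent to the vanishing of its class — has already been carried out in the preceding results. The only point requiring the slightest care is the logical quantifier structure, namely that the single hypothesis $H^2(M, A) = 0$ simultaneously kills the obstruction class for \emph{all} finite order deformations at once, rather than for one chosen deformation; this is immediate because the conclusion of the extensibility theorem is applied uniformly.
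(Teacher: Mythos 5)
Your proposal is correct and follows exactly the route the paper intends: the theorem is an immediate corollary of the preceding extensibility criterion together with the fact that the obstruction $Ob_{(R_t, S_t)}$ is a $2$-cocycle, since the vanishing of $H^2(M,A)$ forces every obstruction class to be trivial. The paper itself gives no separate proof, treating this as exactly the one-line consequence you describe, so there is nothing missing in your argument.
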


\medskip

\section{Rota-Baxter systems, Yang-Baxter pairs, covariant bialgebras and averaging systems}\label{sec-5}
In this section, we give some further study of Rota-Baxter systems, associative Yang-Baxter pairs and covariant bialgebras introduced by Brzezi\'{n}ski \cite{brz}. Some applications of the results of previous sections are given to these structures. Finally, we introduce generalized averaging systems and observe that they are particular cases of generalized Rota-Baxter systems. A generalized averaging system induces an associative dialgebra structure in the sense of Loday \cite{loday}.

\subsection{Rota-Baxter systems}
It has been shown in Section \ref{sec-2} that a Rota-Baxter system on an associative algebra $A$ is a generalized Rota-Baxter system on the adjoint bimodule over the algebra $A$. Thus, the results of the previous sections can be applied to Rota-Baxter systems by considering the adjoint bimodule.

Combining Theorem \ref{thm-rbs-mc} and Theorem \ref{sum-rbs} for the adjoint bimodule, we get the following.

\begin{thm}
Let $A$ be an associative algebra. Then there is a graded Lie bracket $\llbracket ~, ~ \rrbracket$ defined by (\ref{der-pr1})-(\ref{der-pr5}) on the graded vector space $C^\bullet (A, A) = \bigoplus_{n \geq 0}  \mathrm{Hom}(A^{\otimes n}, A \oplus A)$. A pair $(R, S)$ of linear maps on $A$ is a Rota-Baxter system if and only if $(R, S) \in C^1(A, A)$ is a Maurer-Cartan element in the graded Lie algebra $(C^\bullet (A, A), \llbracket ~, ~ \rrbracket )$. Consequently, a Rota-Baxter system $(R,S)$ induces a differential $d_{(R,S)}  = \llbracket (R,S), ~ \rrbracket$ on the graded vector space $C^\bullet (A, A)$ which makes $(C^\bullet (A, A), \llbracket ~, \rrbracket, d_{(R,S)} )$ a dgLa.

Moreover, for any pair $(R', S')$ of linear maps on $A$, the pair $(R+R', S+S')$ is a Rota-Baxter system on $A$ if and only if $(R', S')$ satisfies
\begin{align*}
d_{(R,S)} (R', S') + \frac{1}{2} \llbracket (R',S'), (R', S') \rrbracket = 0.
\end{align*}
\end{thm}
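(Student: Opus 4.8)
The plan is to deduce this statement as the adjoint-bimodule specialization of the general results already established, namely Theorem~\ref{thm-rbs-mc} and Theorem~\ref{sum-rbs}. The starting observation is that, as recorded in Section~\ref{sec-2}, a Rota-Baxter system on the associative algebra $A$ is precisely a generalized Rota-Baxter system on the adjoint $A$-bimodule $A$, where both the left and right actions are given by the multiplication $\mu$. Under this identification the defining identities (\ref{rb-system1})-(\ref{rb-system2}) become $R(u)R(v) = R(R(u)v + uS(v))$ and the analogous equation for $S$, which are exactly the original Rota-Baxter system equations, since $R(u)\cdot v = R(u)v$ and $u\cdot S(v) = uS(v)$ for the adjoint action.

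With this identification in hand, the first step is to instantiate Theorem~\ref{thm-rbs-mc} by taking $M = A$ and $l = r = \mu$ in the derived-bracket formulas (\ref{der-pr1})-(\ref{der-pr5}). This immediately produces the graded Lie bracket $\llbracket~,~\rrbracket$ on $C^\bullet(A,A) = \bigoplus_{n\geq 0}\mathrm{Hom}(A^{\otimes n}, A\oplus A)$ and yields the Maurer-Cartan characterization: a pair $(R,S)\in C^1(A,A)$ satisfies $\llbracket (R,S),(R,S)\rrbracket = 0$ if and only if it is a Rota-Baxter system. The computation that the bracket $\llbracket (R,S),(R,S)\rrbracket$ encodes precisely the two Rota-Baxter identities is exactly the one carried out in the proof of Theorem~\ref{thm-rbs-mc}, now read for the adjoint bimodule.

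The differential claim then follows from a standard fact about Maurer-Cartan elements: in any graded Lie algebra, a Maurer-Cartan element $\pi$ gives rise to a square-zero degree-one derivation $\llbracket \pi,~\rrbracket$, as a consequence of the graded Jacobi identity together with $\llbracket \pi,\pi\rrbracket = 0$. Applying this with $\pi = (R,S)$ shows that $d_{(R,S)} = \llbracket (R,S),~\rrbracket$ is a differential and hence turns $(C^\bullet(A,A), \llbracket~,~\rrbracket)$ into a dgLa. Finally, the last assertion is obtained by invoking Theorem~\ref{sum-rbs} for the adjoint bimodule $M = A$: the sum $(R+R', S+S')$ is a Rota-Baxter system if and only if $(R',S')$ is a Maurer-Cartan element of the twisted dgLa, which is precisely the displayed equation $d_{(R,S)}(R',S') + \frac{1}{2}\llbracket (R',S'),(R',S')\rrbracket = 0$.

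Since both Theorem~\ref{thm-rbs-mc} and Theorem~\ref{sum-rbs} are stated for an arbitrary $A$-bimodule $M$, and the adjoint bimodule is a genuine $A$-bimodule, no genuine obstacle arises; the only point requiring care is to confirm that the abstract actions appearing in the derived bracket collapse correctly to the multiplication $\mu$, so that the resulting cochain complex is the expected $C^\bullet(A,A)$ and the Maurer-Cartan equation recovers the classical Rota-Baxter system identities rather than some deformed variant. This bookkeeping verification is the only step I expect to occupy the bulk of the otherwise routine argument.
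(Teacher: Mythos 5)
Your proposal is correct and matches the paper's own treatment: the paper derives this theorem precisely by specializing Theorem~\ref{thm-rbs-mc} and Theorem~\ref{sum-rbs} to the adjoint bimodule $M = A$, which is exactly your argument. The bookkeeping step you flag (checking that the bimodule actions collapse to the multiplication $\mu$) is indeed the only content, and it goes through as you describe.
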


Moreover, Proposition \ref{prop-hoch} leads to the following when we consider the adjoint bimodule.

\begin{prop}
Let $A$ be an associative algebra and $(R,S)$ be a Rota-Baxter system on $A$. Then the vector space $A$ carries a new associative algebra structure with product $a * b = R(a) b + a S(b)$, for $a, b \in A$. The associative algebra $(A, *)$ has a bimodule representation on $A \oplus A$ given by
\begin{align*}
l (a, (b,c)) = (R(a)b - R(ac) , S(a) c - S(ac) ) ~~~ \text{ and } ~~~ r( (b,c), a) = (b R(a) - R (ba), c S(a) - S(ba)).
\end{align*}
\end{prop}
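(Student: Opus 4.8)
The plan is to specialize Proposition~\ref{prop-hoch} to the adjoint bimodule, where $M = A$ with left and right actions given by the algebra multiplication, i.e. $a \cdot u = au$ and $u \cdot a = ua$. The claimed formulas for the left and right actions $l$ and $r$ on $A \oplus A$ should then follow by direct substitution, so the bulk of the work is already done by the earlier proposition. First I would verify the associativity of the new product $a * b = R(a)b + aS(b)$: this is precisely the associative multiplication from Proposition~\ref{RBS-dend} applied to the adjoint bimodule, since there $u \prec v = u \cdot S(v) = uS(v)$ and $u \succ v = R(u) \cdot v = R(u)v$, whence $u * v = R(u)v + uS(v)$. Thus the dendriform structure supplies associativity for free, and I need only rewrite the general action formulas in the adjoint setting.

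Next I would substitute into the two displayed formulas of Proposition~\ref{prop-hoch}. Taking $M = A$ with $u \cdot b = ub$ and $a \cdot u = au$, the general left action
\[
l(u,(a,b)) = (R(u)a - R(u \cdot b),\, S(u)b - S(u \cdot b))
\]
becomes $l(a,(b,c)) = (R(a)b - R(ac),\, S(a)c - S(ac))$ after the obvious relabelling of the algebra element as $a$ and the pair as $(b,c)$, with $u \cdot c = ac$ replaced appropriately. Similarly, the general right action $r((a,b),u) = (aR(u) - R(a \cdot u),\, bS(u) - S(a \cdot u))$ becomes $r((b,c),a) = (bR(a) - R(ba),\, cS(a) - S(ba))$. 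The only subtlety is bookkeeping: one must be careful that in the adjoint bimodule the symbol $u \cdot b$ denotes right multiplication $ub$ while $a \cdot u$ denotes left multiplication, and check that each occurrence of the action in the general formula lands on the correct side.

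Since Proposition~\ref{prop-hoch} already guarantees that these formulas define a genuine bimodule representation of $(M,*)$ on $A \oplus A$, no independent verification of the bimodule axioms is required; the specialization to the adjoint case is automatic. The main (and essentially only) obstacle is therefore purely notational: ensuring the index and side conventions are translated correctly so that the four scalar entries match the stated formulas, and confirming that the associative product $a*b$ recorded here agrees with the instance of $u*v$ used implicitly in Proposition~\ref{prop-hoch}. I would close by remarking that this is exactly the content needed to apply the Hochschild cohomology machinery of Subsection~3.1 to ordinary Rota-Baxter systems, via the adjoint bimodule.
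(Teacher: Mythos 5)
Your proposal is correct and is exactly the paper's approach: the paper states this proposition without a separate proof, introducing it with the remark that ``Proposition \ref{prop-hoch} leads to the following when we consider the adjoint bimodule,'' with associativity of $*$ coming from Proposition \ref{RBS-dend} in the same way. Your substitutions $u \mapsto a$, $(a,b) \mapsto (b,c)$ reproduce the stated action formulas correctly, so nothing further is needed.
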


The Hochschild cohomology of the associative algebra $(A, *)$ with coefficients in the above bimodule $A \oplus A$ is isomorphic to the cohomology induced from the Maurer-Cartan element $(R,S)$ in the graded Lie algebra $(C^\bullet(A,A), \llbracket ~,~ \rrbracket)$. This cohomology is called the cohomology of the Rota-Baxter system $(R, S)$ on the algebra $A$.

Like the previous section, we may define deformations of a Rota-Baxter system $(R,S)$ on an algebra $A$. Such deformations can be studied in terms of the cohomology of $(R,S)$.

\subsection{Associative Yang-Baxter pairs}\label{subsec-aybp}
Let $A$ be an associative algebra. In \cite{brz} the author introduced associative Yang-Baxter pairs as generalization of associative Yang-Baxter solutions \cite{aguiar}. Let us introduce the following notations first. For $r = r_{(1)} \otimes r_{(2)}$ and $s= s_{(1)} \otimes s_{(2)} \in A^{\otimes 2}$, we define
\begin{align*}
r_{13} s_{12} =  r_{(1)} s_{(1)} \otimes s_{(2)} \otimes r_{(2)}, \quad
r_{12} s_{23} =  r_{(1)} \otimes r_{(2)}  s_{(1)} \otimes s_{(2)}, \quad
r_{23} s_{13} =  s_{(1)} \otimes r_{(1)} \otimes r_{(2)}  s_{(2)}
\end{align*}
be elements of $A^{\otimes 3}$. A pair $(r,s )$ of elements of $A^{\otimes 2}$ is called an {\bf associative Yang-Baxter pair} if they satisfy
\begin{align*}
\begin{cases}
r_{13} r_{12} - r_{12} r_{23} + s_{23} r_{13} = 0,\\
s_{13} r_{12} - s_{12} s_{23} + s_{23} s_{13} = 0.
\end{cases}
\end{align*}

In \cite{brz} Brzezi\'{n}ski showed that if $(r= r_{(1)} \otimes r_{(2)}, s= s_{(1)} \otimes s_{(2)})$ is an associative Yang-Baxter pair, then the pair $(R,S)$ of linear maps on $A$ defined by 
\begin{align}\label{rbs-brz}
R(a) = r_{(1)}a r_{(2)} ~~ \text{ and } ~~ S(a) = s_{(1)} a s_{(2)}
\end{align}
is a Rota-Baxter system. In the following, we associate a generalized Rota-Baxter system to any skew-symmetric associative Yang-Baxter pair.

For $r = r_{(1)} \otimes r_{(2)}$ and $s = s_{(1)} \otimes s_{(2)}$, we define linear maps $r^\sharp , s^\sharp : A^* \rightarrow A$ by $r^\sharp (\alpha) =  \alpha (r_{(2)} ) r_{(1)}$ and $s^\sharp (\alpha) = \alpha (s_{(2)}) s_{(1)}$, for $\alpha \in A^*.$ With these notations, we have the following.

\begin{prop}
Let $r, s$ be two skew-symmetric elements of $A^{\otimes 2}$. Then $(r,s)$ is an associative Yang-Baxter pair if and only if $(r^\sharp, s^\sharp)$ is a generalized Rota-Baxter system on the coadjoint module $A^*$ over the algebra $A$.
\end{prop}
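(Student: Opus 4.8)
The plan is to translate both sides into a common language by pairing tensor legs against covectors: the two associative Yang--Baxter equations live in $A^{\otimes 3}$, while the two defining identities (\ref{rb-system1})--(\ref{rb-system2}) for $(r^\sharp, s^\sharp)$ live in $A$ but depend on test covectors $\alpha, \beta \in A^*$. Concretely, for $\alpha, \beta \in A^*$ I would apply $\beta$ to the second tensor factor and $\alpha$ to the third, sending $x_1 \otimes x_2 \otimes x_3 \in A^{\otimes 3}$ to $\beta(x_2)\,\alpha(x_3)\,x_1 \in A$. Since $r, s$ are honest finite tensors in $A^{\otimes 2}$ and $A^*$ separates the points of $A$, this contraction is injective as $\alpha, \beta$ range over $A^*$; hence an identity in $A^{\otimes 3}$ holds if and only if all of its contractions vanish. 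This single device delivers both directions of the equivalence at once.

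First I would run the forward computation for $R = r^\sharp$. Unwinding the definitions $r^\sharp(\alpha) = \alpha(r_{(2)}) r_{(1)}$ and $s^\sharp(\beta) = \beta(s_{(2)}) s_{(1)}$ together with the coadjoint actions $(a \cdot f)(b) = f(ba)$ and $(f \cdot a)(b) = f(ab)$, each of the three expressions $r^\sharp(\alpha) r^\sharp(\beta)$, $r^\sharp(r^\sharp(\alpha) \cdot \beta)$ and $r^\sharp(\alpha \cdot s^\sharp(\beta))$ becomes the above contraction of a single explicit element of $A^{\otimes 3}$, namely the three monomials occurring in the first Yang--Baxter equation. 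The one subtle point is the term carrying the right action $\alpha \cdot s^\sharp(\beta)$: there the covector initially lands on the wrong leg of $s$, and I would use skew-symmetry $s_{(1)} \otimes s_{(2)} = - s_{(2)} \otimes s_{(1)}$ to move it to the correct leg, picking up exactly the sign needed to match the displayed monomial. Collecting the three pieces shows that (\ref{rb-system1}) for $(r^\sharp, s^\sharp)$, read for all $\alpha, \beta$, is precisely the contraction of the first defining identity of an associative Yang--Baxter pair.

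The computation for $S = s^\sharp$ is entirely parallel and, after the same skew-symmetry manipulation (now invoking skew-symmetry of both $r$ and $s$), yields the contraction of the second Yang--Baxter identity. By the injectivity of the pairing noted above, the two generalized Rota--Baxter identities hold for every $\alpha, \beta \in A^*$ exactly when the two corresponding elements of $A^{\otimes 3}$ vanish, i.e. exactly when $(r, s)$ is an associative Yang--Baxter pair, which proves both implications simultaneously. I expect the main obstacle to be purely bookkeeping: tracking which leg each covector is applied to and, above all, the signs generated each time skew-symmetry is invoked to relocate a covector between the legs of $r$ or $s$. Setting up the contraction once and recording the sign rule $\gamma(x_{(1)}) x_{(2)} = - x^\sharp(\gamma)$ for skew $x$ should make the three-term matchings routine.
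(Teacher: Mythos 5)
Your contraction device and the separation argument are exactly the paper's (its auxiliary map $(\beta\otimes\alpha)^\sharp(a\otimes b\otimes c)=\beta(b)\alpha(c)\,a$ is precisely your pairing), and your treatment of the first identity coincides with the paper's proof that the first Yang--Baxter equation is equivalent to (\ref{rb-system1}) for $(r^\sharp,s^\sharp)$. The gap is your claim that the computation for $S=s^\sharp$ is ``entirely parallel'' using the \emph{same} contraction. It is not. Write $\Phi_{\beta,\alpha}(x_1\otimes x_2\otimes x_3)=\beta(x_2)\alpha(x_3)\,x_1$ and apply it to the monomials of the second Yang--Baxter equation $s_{13}r_{12}-s_{12}s_{23}+s_{23}s_{13}=0$, namely $s_{13}r_{12}=s_{(1)}r_{(1)}\otimes r_{(2)}\otimes s_{(2)}$, $s_{12}s_{23}=s_{(1)}\otimes s_{(2)}\tilde{s}_{(1)}\otimes\tilde{s}_{(2)}$ and $s_{23}s_{13}=\tilde{s}_{(1)}\otimes s_{(1)}\otimes s_{(2)}\tilde{s}_{(2)}$. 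Using your own sign rule one finds
\begin{align*}
\Phi_{\beta,\alpha}(s_{13}r_{12})&=s^\sharp(\alpha)\,r^\sharp(\beta), &
\Phi_{\beta,\alpha}(s_{12}s_{23})&=s^\sharp\big(s^\sharp(\alpha)\cdot\beta\big), &
\Phi_{\beta,\alpha}(s_{23}s_{13})&=-\,s^\sharp\big(\alpha\cdot s^\sharp(\beta)\big),
\end{align*}
so the contracted second equation reads $s^\sharp(\alpha)\,r^\sharp(\beta)=s^\sharp\big(s^\sharp(\alpha)\cdot\beta+\alpha\cdot s^\sharp(\beta)\big)$ for all $\alpha,\beta$. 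That is a mixed identity, \emph{not} (\ref{rb-system2}): the product term is $s^\sharp(\alpha)r^\sharp(\beta)$ instead of $s^\sharp(\alpha)s^\sharp(\beta)$, and the nested term carries $s^\sharp(\alpha)\cdot\beta$ instead of $r^\sharp(\alpha)\cdot\beta$. So your scheme, as stated, proves the wrong equivalence for the second half of the proposition.

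The failure is structural, not a matter of bookkeeping. Under $\Phi_{\beta,\alpha}$ a product of two sharps can only come from a monomial whose algebra multiplication sits in the uncontracted first leg; in the second Yang--Baxter equation the only such monomial is $s_{13}r_{12}$, whereas $s_{23}s_{13}$, which must account for $s^\sharp(\alpha)s^\sharp(\beta)$, carries its product in the third leg. Skew-symmetry only exchanges the two components within a single copy of $r$ or $s$; it never moves a product from one tensor leg to another, so no sign manipulation repairs the ``parallel'' computation. The repair is exactly what the paper does: for the second equation one must use the reflected pairing ${}^\sharp(\beta\otimes\alpha)(a\otimes b\otimes c)=\beta(a)\alpha(b)\,c$, with covectors on legs one and two and output on leg three. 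With that map the three monomials contract to $-\,s^\sharp\big(r^\sharp(\alpha)\cdot\beta\big)$, $s^\sharp\big(\alpha\cdot s^\sharp(\beta)\big)$ and $s^\sharp(\alpha)\,s^\sharp(\beta)$ respectively, and the second Yang--Baxter equation becomes precisely (\ref{rb-system2}). Once you allow two different contractions, one per identity, your argument goes through and is the paper's proof.
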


\begin{proof}
For $\alpha, \beta \in A^*$, we have $r^\sharp (\alpha) r^\sharp (\beta) = \alpha (r_{(2)}) \beta (\tilde{r}_{(2)}) r_{(1)} \tilde{r}_{(1)}$. On the other hand,
\begin{align*}
r^\sharp ( r^\sharp (\alpha) \cdot \beta + \alpha \cdot s^\sharp (\beta)) =~& r^\sharp \big(  \alpha(r_{(2)}) (r_{(1)} \cdot \beta) - \beta (s_{(1)}) (\alpha \cdot s_{(2)})  \big) \\
=~& \alpha(r_{(2)}) ((r_{(1)} \cdot \beta) (\tilde{r}_{(2)})) \tilde{r}_{(1)} - \beta ( s_{(1)}) ((\alpha \cdot s_{(2)})(r_{(2)})) r_{(1)} \\
=~& \alpha(r_{(2)})  \beta ( \tilde{r}_{(2)} r_{(1)}   ) \tilde{r}_{(1)} - \beta ( s_{(1)}) \alpha ( s_{(2)} r_{(2)}) r_{(1)}.
\end{align*}
We consider an auxiliary map $(\beta \otimes \alpha)^\sharp : A^{\otimes 3} \rightarrow A$ by $(\beta \otimes \alpha)^\sharp ( a \otimes b \otimes c) = \beta(b) \alpha(c) a$, for $a \otimes b \otimes c \in A^{\otimes 3}$. Note that $(r_{13} r_{12} - r_{12} r_{23} + s_{23} r_{13} ) = 0$ holds if and only if $(\beta \otimes \alpha)^\sharp (r_{13} r_{12} - r_{12} r_{23} + s_{23} r_{13} ) = 0$, for all $\alpha, \beta \in A^*$, or equivalently,
\begin{align*}
\beta (\tilde{r}_{(2)}) \alpha ( r_{(2)}) r_{(1)} \tilde{r}_{(1)} - \beta ( \tilde{r}_{(2)} r_{(1)} ) \alpha (r_{(2)}) \tilde{r}_{(1)} + \beta (s_{(1)}) \alpha (s_{(2)} r_{(2)}) r_{(1)} = 0.
\end{align*}
This is equivalent to 
\begin{align*}
r^\sharp (\alpha) r^\sharp (\beta) =  r^\sharp ( r^\sharp (\alpha) \cdot \beta + \alpha \cdot s^\sharp (\beta)), \text{ for all } \alpha , \beta \in A^*.
\end{align*}
Similarly, we have $s^\sharp (\alpha) s^\sharp (\beta) = \alpha (s_{(1)}) \beta (\tilde{s}_{(1)}) s_{(2)} \tilde{s}_{(2)}$ and
\begin{align*}
s^\sharp ( r^\sharp (\alpha) \cdot \beta + \alpha \cdot s^\sharp (\beta)) =~& s^\sharp \big( \alpha (r_{(2)}) (r_{(1)} \cdot \beta) - \beta (s_{(1)}) ( \alpha \cdot s_{(2)}) \big) \\
=~& - \alpha (r_{(2)} ) ((r_{(1)} \cdot \beta) (s_{(1)})) s_{(2)} + \beta (s_{(1)}) (( \alpha \cdot s_{(2)}) (\tilde{s}_{(1)})) \tilde{s}_{(2)} \\
=~& - \alpha (r_{(2)} ) \beta ( s_{(1)} r_{(1)} ) s_{(2)} + \beta (s_{(1)} ) \alpha ( s_{(2)} \tilde{s}_{(1)}) \tilde{s}_{(2)}.
\end{align*}
Here we consider the auxiliary map ${}^\sharp(\beta \otimes \alpha) : A^{\otimes 3} \rightarrow A$ defined by ${}^\sharp(\beta \otimes \alpha) ( a \otimes b \otimes c) =  \beta(a) \alpha(b) c$, for $a \otimes b \otimes c \in A^{\otimes 3}$. Then $s_{13} r_{12} - s_{12} s_{23} + s_{23} s_{13} = 0$ if and only if $ {}^\sharp(\beta \otimes \alpha) (s_{13} r_{12} - s_{12} s_{23} + s_{23} s_{13}) = 0$ , for all $\alpha, \beta \in A^*$. This is equivalent to
\begin{align*}
\beta (s_{(1)} r_{(1)}) \alpha (r_{(2)}) s_{(2)} - \beta (s_{(1)}) \alpha ( s_{(2)} \tilde{s}_{(1)}) \tilde{s}_{(2)} + \beta (s_{(1)} ) \alpha (\tilde{s}_{(1)}) \tilde{s}_{(2)} s_{(2)} = 0,
\end{align*}
or equivalently, $s^\sharp (\alpha) s^\sharp (\beta) = s^\sharp ( r^\sharp (\alpha) \cdot \beta + \alpha \cdot s^\sharp (\beta))$, for all $\alpha, \beta \in A^*$. Hence the proof.
\end{proof}

The above proposition shows that the dual space $A^*$ carries a dendriform structure given by
%\begin{align*}
$\alpha \prec \beta = \beta (s_{(2)}) (\alpha \cdot s_{(1)})$ and $\alpha \succ \beta = \alpha (r_{(2)}) (r_{(1)} \cdot \beta).$
%\end{align*}
Hence the corresponding associative product on $A^*$ is given by 
\begin{align}\label{skew-aybp-ass}
\alpha * \beta =  r^\sharp (\alpha ) \cdot \beta + \alpha \cdot s^\sharp (\beta ) =  \alpha (r_{(2)}) (r_{(1)} \cdot \beta) + \beta (s_{(2)}) (\alpha \cdot s_{(1)}).
\end{align}
Moreover, the maps $r^\sharp , s^\sharp : A^* \rightarrow A$ are both associative algebra morphisms.

\medskip

Let $(r,s)$ and $(r', s')$ be two (skew-symmetric) associative Yang-Baxter pairs on $A$. They are said to be equivalent if there exists an algebra morphism $\phi : A \rightarrow A$ satisfying $(\phi \otimes \phi) (r) = r'$ and $(\phi \otimes \phi)(s) = s'$.

Next, we consider certain morphisms between skew-symmetric associative Yang-Baxter pairs that induce morphisms between corresponding generalized Rota-Baxter systems.

\begin{defn}
Let $(r,s)$ and $(r',s')$ be two skew-symmetric associative Yang-Baxter pairs. A {\bf weak morphism} from $(r,s)$ to $(r',s')$ is a triple $(\phi, \varphi, \psi)$ consist of two algebra morphisms $\phi, \varphi : A \rightarrow A$ and a linear map $\psi : A \rightarrow A$ satisfying
\begin{align*}
&(\psi \otimes \mathrm{id}_A) (r' ) = (\mathrm{id}_A \otimes \phi )(r) ~~~ \text{ and } ~~~ (\psi \otimes \mathrm{id}_A) (s' ) = (\mathrm{id}_A \otimes \varphi )(s) ,\\
& \psi ( a \phi (b)) = \psi (a) b ~~~ \text{ and } ~~~ \psi (\varphi (a) b ) = a \psi (b).
\end{align*}
\end{defn}

A weak morphism $(\phi, \varphi, \psi)$ is called a weak isomorphism if $\phi, \varphi, \psi$ are all linear isomorphisms. It follows that two skew-symmetric associative Yang-Baxter pairs $(r,s)$ and $(r', s')$ are equivalent if and only if there is an algebra morphism $\phi : A \rightarrow A$ such that $(\phi , \phi, \phi^{-1})$ is a weak morphism from $(r,s)$ to $(r',s').$

\begin{prop}
Let $(r,s)$ and $(r',s')$ be two skew-symmetric associative Yang-Baxter pairs. Then $(\phi, \varphi, \psi)$ is a weak (iso)morphism from $(r,s)$ to $(r',s')$ if and only if $(\phi, \varphi, \psi^*)$ is a (iso)morphism of generalized Rota-Baxter systems from $(r^\sharp, s^\sharp)$ to $(r'^\sharp, s'^\sharp)$.
\end{prop}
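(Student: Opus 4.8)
The plan is to realize the triple $(\phi,\varphi,\psi^*)$ as a GRBS-morphism by dualizing each defining condition of a weak morphism, where $\psi^* : A^* \to A^*$ denotes the transpose of $\psi$, i.e. $(\psi^*\alpha)(a) = \alpha(\psi(a))$. Since the bimodule here is the coadjoint module $A^*$, all verifications reduce to functional identities of the form ``$F(\alpha) = G(\alpha)$ in $A$ for every $\alpha \in A^*$''; and when $F,G$ are contractions of fixed tensors against $\alpha$, such an identity is equivalent to the equality of those tensors in $A^{\otimes 2}$. I would therefore check separately the two families of conditions in Definition~\ref{grbs-mor}: the action-compatibility condition (ii) and the intertwining condition (i).

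For condition (ii) I would simply unwind the coadjoint actions $(a\cdot f)(b)=f(ba)$ and $(f\cdot a)(b)=f(ab)$. Evaluating $\psi^*(a\cdot\alpha)=\phi(a)\cdot\psi^*(\alpha)$ at an arbitrary $b\in A$ gives $\alpha(\psi(b)\,a)=\alpha(\psi(b\,\phi(a)))$, and letting $\alpha$ range over $A^*$ this is exactly $\psi(b\,\phi(a))=\psi(b)\,a$, the first multiplicativity condition of a weak morphism. Likewise $\psi^*(\alpha\cdot a)=\psi^*(\alpha)\cdot\varphi(a)$ unwinds to $a\,\psi(b)=\psi(\varphi(a)\,b)$, the second weak-morphism condition. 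Thus (ii) is equivalent to the pair $\psi(a\phi(b))=\psi(a)b$, $\psi(\varphi(a)b)=a\psi(b)$, with no further input needed.

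For condition (i) I would evaluate $r'^\sharp\circ\psi^*=\phi\circ r^\sharp$ on $\alpha\in A^*$. Using $r^\sharp(\alpha)=\alpha(r_{(2)})\,r_{(1)}$ this reads $\alpha(\psi(r'_{(2)}))\,r'_{(1)}=\alpha(r_{(2)})\,\phi(r_{(1)})$ for all $\alpha$, which is the second-leg contraction of the identity $(\mathrm{id}_A\otimes\psi)(r')=(\phi\otimes\mathrm{id}_A)(r)$ in $A^{\otimes 2}$. This is not literally the weak-morphism condition $(\psi\otimes\mathrm{id}_A)(r')=(\mathrm{id}_A\otimes\phi)(r)$, and reconciling the two is the one genuinely non-formal step. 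The key is skew-symmetry: applying the flip $\tau$ and using $\tau(r)=-r$, $\tau(r')=-r'$ (together with $\tau\circ(\mathrm{id}_A\otimes\psi)=(\psi\otimes\mathrm{id}_A)\circ\tau$ and $\tau\circ(\phi\otimes\mathrm{id}_A)=(\mathrm{id}_A\otimes\phi)\circ\tau$) turns $(\mathrm{id}_A\otimes\psi)(r')=(\phi\otimes\mathrm{id}_A)(r)$ into $(\psi\otimes\mathrm{id}_A)(r')=(\mathrm{id}_A\otimes\phi)(r)$, exactly the required condition; the analogous computation with $s^\sharp$ and $\varphi$ yields $(\psi\otimes\mathrm{id}_A)(s')=(\mathrm{id}_A\otimes\varphi)(s)$. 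I expect this flip-plus-skew-symmetry step to be the main obstacle, in the sense that it is where the hypothesis of skew-symmetry is actually consumed and where a sign-bookkeeping slip is easiest to make. Finally, for the (iso)morphism statement I would note that $\phi,\varphi$ are unchanged and that $\psi$ is a linear isomorphism if and only if its transpose $\psi^*$ is (with inverse $(\psi^{-1})^*$), so that the weak-isomorphism and GRBS-isomorphism conditions coincide.
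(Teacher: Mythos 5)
Your proof is correct and takes essentially the same route as the paper: both arguments dualize each condition of a morphism of generalized Rota--Baxter systems against the coadjoint actions, and both consume skew-symmetry exactly once to reconcile the tensor legs --- the paper by rewriting $r^\sharp(\xi) = -\xi(r_{(1)})\,r_{(2)}$ at the outset before pairing with $\xi \otimes \eta$, you by deriving $(\mathrm{id}_A \otimes \psi)(r') = (\phi \otimes \mathrm{id}_A)(r)$ first and then applying the flip. Your organization as a chain of equivalences also covers the converse direction in one pass, which the paper dispatches with ``the converse part is similar.''
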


\begin{proof}
Take $r = r_{(1)} \otimes r_{(2)}$, $s = s_{(1)} \otimes s_{(2)}$, $r' = r'_{(1)} \otimes r_{(2)}'$ and $s' = s'_{(1)} \otimes s'_{(2)}$. Suppose $(\phi, \varphi, \psi^*)$ is a morphism of generalized Rota-Baxter systems from $(r^\sharp , s^\sharp)$ to $(r'^\sharp, s'^\sharp)$. Then by definition,
\begin{align}\label{grbs-mor-eq}
r'^\sharp \circ \psi^* (\xi) =\phi \circ r^\sharp (\xi), ~~~~ s'^\sharp \circ \psi^* (\xi ) = \varphi \circ s^\sharp (\xi), ~~~~ \psi^* ( a \cdot \xi) = \phi(a) \cdot \psi^* (\xi), ~~~~ \psi^* (\xi \cdot a) = \psi^* (\xi) \cdot \varphi (a),
\end{align} 
for all $a \in A$, $\xi \in A^*$. Note that $r^\sharp (\xi) = - \xi (r_{(1)}) r_{(2)}$ and $r'^\sharp (\xi) = - \xi ( r'_{(1)}) r'_{(2)}$. Therefore, for any $\xi, \eta \in A^*$, 
\begin{align*}
\langle (\psi \otimes \mathrm{id}_A ) (r'), \xi \otimes \eta \rangle = \langle \psi ( r'_{(1)}) \otimes r'_{(2)} , \xi \otimes \eta \rangle = \langle  \psi ( r'_{(1)}), \xi \rangle \langle r'_{(2)} , \eta \rangle 
= \langle r'_{(1)} , \psi^*\xi \rangle \langle r'_{(2)} , \eta \rangle 
= - \langle r'^\sharp (\psi^* \xi), \eta \rangle
\end{align*}
and
\begin{align*}
\langle (\mathrm{id}_A \otimes \phi )(r) , \xi \otimes \eta \rangle = \langle r_{(1)} \otimes \phi (r_{(2)}), \xi \otimes \eta \rangle = \langle r_{(1)} , \xi \rangle \langle \phi (r_{(2)}), \eta \rangle 
= \langle \phi (\xi (r_{(1)}) r_{(2)}), \eta \rangle = - \langle \phi (r^\sharp \xi ), \eta \rangle.
\end{align*}
Thus it follows from the first condition of (\ref{grbs-mor-eq}) that $(\psi \otimes \mathrm{id}_A ) (r') = (\mathrm{id}_A \otimes \phi )(r)$. Similarly, we can prove that $(\psi \otimes \mathrm{id}_A)(s') = (\mathrm{id}_A \otimes \varphi )(s)$ holds. Next we observe that
\begin{align*}
\langle \psi ( a\phi (b)) , \xi \rangle  = \langle \psi^* (\xi),a \phi (b) \rangle = \langle \phi (b) \cdot \psi^* (\xi) , a \rangle = \langle \psi^* (b \cdot \xi ), a \rangle = \langle b \cdot \xi  , \psi (a) \rangle = \langle \xi, \psi (a)b \rangle.
\end{align*} 
Thus, we have $\psi (a \phi (b) ) =\psi( a) b$. Similarly, we get $\psi ( \varphi(a) b ) = a \psi (b)$. Therefore, $(\phi, \varphi, \psi)$ becomes a weak morphism from $(r,s)$ to $(r',s')$.

The converse part is similar and we omit the details.
\end{proof}

Let $(r,s)$ be an associative Yang-Baxter pair on an associative algebra $A$. A deformation of $(r,s)$ is a pair of formal sums $(r_t = \sum t^i r_i,~ s_t = \sum t^i s_i )$ of elements of $A^{\otimes 2}$ with $r_0 = r,~ s_0 = s$ and such that $(r_t, s_t)$ is an associative Yang-Baxter pair. If $(r,s)$ is a skew-symmetric pair, then we assume that each $r_i$ and $s_i$ are also skew-symmetric. In this case, we say that $(r_t, s_t)$ is a skew-symmetric deformation of $(r,s)$.

If $(r_t = \sum t^i r_i, s_t = \sum t^i s_i)$ is a deformation of $(r,s)$, then $(R_t = \sum t^i R_i,~ S_t = \sum t^i S_i)$ is a deformation of the Rota-Baxter system $(R,S)$ given in (\ref{rbs-brz}), where $R_i (a) = r_{i (1)} a r_{i (2)}$ and  $S_i (a) = s_{i (1)} a s_{i (2)}$. On the other hand, if $(r_t = \sum t^i r_i, s_t = \sum t^i s_i)$ is a skew-symmetric deformation of a skew-symmetric Yang-Baxter pair $(r,s)$, then $(r_t^\sharp = \sum t^i r_i^\sharp,~ s_t^\sharp = \sum t^i s_i^\sharp)$ is a deformation of the generalized Rota-Baxter system $(r^\sharp, s^\sharp).$

\subsection{Covariant bialgebras}
Covariant bialgebras were introduced by Brzezi\'{n}ski \cite{brz} as an extension of infinitesimal bialgebras. In this subsection, we study perturbations of the coproduct in a covariant bialgebra and associate a pre-Lie algebra structure to any compatible covariant bialgebra. Some remarks about deformations are also mentioned.

Let $A$ be an associative algebra. Then $A \otimes A$ can be given an $A$-bimodule structure by
$a \cdot ( b \otimes c) = ab \otimes c$ and $(b \otimes c) \cdot a = b \otimes ca.$
Let $\delta_1, \delta_2 : A \rightarrow A \otimes A$ be two derivations on $A$ with values in the $A$-bimodule $A \otimes A$, i.e.
\begin{align*}
\delta_i (ab) = a \cdot \delta_i (b) + \delta_i (a) \cdot b, ~ \text{ for } i = 1,2.
\end{align*}
Then a linear map $\triangle : A \rightarrow A \otimes A$ is said to be a covariant derivation with respect to $(\delta_1, \delta_2)$ if
\begin{align*}
\triangle (ab) =  a \cdot \delta_1 (b) + \triangle(a) \cdot b   =   a \cdot \triangle(b) + \delta_2(a) \cdot b.
\end{align*}
\begin{defn}
A {\bf covariant bialgebra} consists of a tuple $(A, \mu, \triangle, \delta_1, \delta_2)$ in which $(A, \mu)$ is an associative algebra, $(A, \triangle)$ is an coassociative coalgebra, the maps $\delta_1, \delta_2 : A \rightarrow A \otimes A$ are derivations on $A$ with values in the bimodule $A \otimes A$ such that $\triangle$ is a covariant derivation with respect to $(\delta_1, \delta_2)$. 
\end{defn}

The following result constructs a covariant bialgebra from an associative Yang-Baxter pair \cite{brz}
Let $(A, \mu)$ be an associative algebra and $(r,s)$ be an associative Yang-Baxter pair. Define maps $\overline{\triangle}, \delta_r, \delta_s : A \rightarrow A \otimes A$ by
\begin{align}\label{quasi-p}
\overline{\triangle} (a) = a \cdot r - s \cdot a, \qquad \delta_r (a) = a \cdot r -r \cdot a , \qquad \delta_s (a) = a \cdot s -s \cdot a.
\end{align}
Then $(A, \mu, \overline{\triangle}, \delta_r, \delta_s)$ is a covariant bialgebra, called quasitriangular covariant bialgebra. 

In \cite{perturb1,perturb2} Drinfel'd consider the perturbations of the quasi-Hopf algebra structure. In the same spirit, we study here perturbations of the coproduct in a covariant bialgebra. Before that, we introduce some additional notations. Note that, the tensor product $A^{\otimes 3}$ also carries an $A$-bimodule structure by
\begin{align*}
a \cdot (b \otimes c \otimes d )  = ab \otimes c \otimes d  ~~~ \text{ and } ~~~ (b \otimes c \otimes d) \cdot a = b \otimes c \otimes da.
\end{align*}
We will use this $A$-bimodule in the next theorem.
\begin{thm}
Let $(A, \mu, \triangle, \delta_1, \delta_2)$ be a covariant bialgebra and $(r,s)$ be two elements of $A^{\otimes 2}$. Define $\overline{\triangle}, \delta_r, \delta_s : A \rightarrow A \otimes A$ by (\ref{quasi-p}). Then $(A, \mu, \triangle +\overline{\triangle}, \delta_1 + \delta_r, \delta_2 + \delta_s)$ is a covariant bialgebra if and only if 
\begin{align}\label{perturb}
a \cdot  \big( (\mathrm{id} \otimes \triangle)^{-} (r) - ( r_{13}r_{12} - r_{12} r_{23} + s_{23} r_{13} )  \big) -~& \big(  (\mathrm{id} \otimes \triangle)^{-} (s) -  ( s_{13} r_{12} - s_{12} s_{23} + s_{23}s_{13})  \big) \cdot a \\
&= s_{23} \triangle(a)_{13} + \triangle(a)_{13} r_{12}, \nonumber
\end{align}
for all $a \in A$.
Here $(\mathrm{id} \otimes \triangle)^{-} (r) = (\mathrm{id} \otimes \triangle) (r) - (\triangle \otimes \mathrm{id})(r)$ and 
$(\mathrm{id} \otimes \triangle)^{-} (s) = (\mathrm{id} \otimes \triangle) (s) - (\triangle \otimes \mathrm{id})(s).$
\end{thm}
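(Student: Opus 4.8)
The plan is to verify, one by one, the four conditions in the definition of a covariant bialgebra for the perturbed tuple $(A,\mu,\triangle+\overline{\triangle},\delta_1+\delta_r,\delta_2+\delta_s)$, and to show that three of them hold automatically so that the entire content of the theorem lies in the fourth. First, the product $\mu$ is untouched, so $(A,\mu)$ remains associative. Next I would check that $\delta_r$ and $\delta_s$ are themselves derivations with values in $A\otimes A$: writing $\delta_r(a)=a\cdot r-r\cdot a$ with $r=r_{(1)}\otimes r_{(2)}$ and using only the bimodule axioms, one finds $\delta_r(ab)=a\cdot\delta_r(b)+\delta_r(a)\cdot b$, and likewise for $\delta_s$. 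Being sums of derivations, $\delta_1+\delta_r$ and $\delta_2+\delta_s$ are again derivations. In the same spirit a short computation shows that $\overline{\triangle}$ is a covariant derivation with respect to $(\delta_r,\delta_s)$, that is $\overline{\triangle}(ab)=a\cdot\delta_r(b)+\overline{\triangle}(a)\cdot b=a\cdot\overline{\triangle}(b)+\delta_s(a)\cdot b$; adding this to the hypothesis that $\triangle$ is a covariant derivation with respect to $(\delta_1,\delta_2)$ makes $\triangle+\overline{\triangle}$ a covariant derivation with respect to $(\delta_1+\delta_r,\delta_2+\delta_s)$. None of these three conditions imposes any constraint, so the whole theorem reduces to the coassociativity of $\triangle':=\triangle+\overline{\triangle}$.

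To analyse coassociativity I would expand $(\triangle'\otimes\mathrm{id})\triangle'-(\mathrm{id}\otimes\triangle')\triangle'$ into four families according to whether each of the two factors is $\triangle$ or $\overline{\triangle}$. The family with two factors $\triangle$ vanishes because $(A,\triangle)$ is coassociative, leaving three cross families. In the family with two factors $\overline{\triangle}$, each factor contributes one of the summands $a\cdot r$ or $-s\cdot a$, and multiplying these out in $A^{\otimes 3}$ produces exactly $r_{13}r_{12}$, $r_{12}r_{23}$, $s_{23}r_{13}$ acted on by $a$ from the left and $s_{13}r_{12}$, $s_{12}s_{23}$, $s_{23}s_{13}$ acted on by $a$ from the right; the genuinely mixed terms, in which $a$ occupies the middle tensor slot, occur twice with opposite sign (after relabelling the two summation copies of $r,s$) and cancel. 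This family therefore contributes precisely the quadratic part $a\cdot(r_{13}r_{12}-r_{12}r_{23}+s_{23}r_{13})-(s_{13}r_{12}-s_{12}s_{23}+s_{23}s_{13})\cdot a$ of (\ref{perturb}).

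For the family obtained by applying $\overline{\triangle}$ to the two tensor legs of $\triangle(a)$, substituting $\overline{\triangle}(a_{(1)})=a_{(1)}\cdot r-s\cdot a_{(1)}$ (and similarly on the second leg) produces the two clean terms $\triangle(a)_{13}r_{12}$ and $s_{23}\triangle(a)_{13}$ of the right-hand side of (\ref{perturb}), together with two spurious terms linear in $\triangle(a)$. The last family, obtained by applying $\triangle$ to $\overline{\triangle}(a)=a\cdot r-s\cdot a$, requires expanding $\triangle$ on the products $a\,r_{(1)}$ and $s_{(2)}\,a$, for which I would invoke the covariant-derivation property of $\triangle$ in both of its forms $\triangle(xy)=x\cdot\delta_1(y)+\triangle(x)\cdot y=x\cdot\triangle(y)+\delta_2(x)\cdot y$. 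The terms in which the differential falls on $a$ are exactly the ones that cancel the two spurious $\triangle(a)$-terms of the previous family, while the terms in which it falls on the $r$- or $s$-component collect into the coproduct-type differences $a\cdot(\mathrm{id}\otimes\triangle)^{-}(r)$ and $-(\mathrm{id}\otimes\triangle)^{-}(s)\cdot a$.

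The step I expect to be the main obstacle is this last reorganisation: keeping the Sweedler indices straight across the three slots of $A^{\otimes 3}$, choosing the correct form of the covariant-derivation identity for each of $\triangle(a\,r_{(1)})$ and $\triangle(s_{(2)}\,a)$ so that the $\delta_1$- and $\delta_2$-contributions recombine into $(\mathrm{id}\otimes\triangle)^{-}$, and checking that every middle-slot term and every spurious $\triangle(a)$-term cancels. The auxiliary identity $a\cdot(\delta_1-\triangle)(b)=(\delta_2-\triangle)(a)\cdot b$, obtained by equating the two forms of the covariant-derivation property, is the tool that moves these contributions between the left and right actions and makes the interplay of $\delta_1$, $\delta_2$ and $\triangle$ visible. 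Once all terms are gathered, the vanishing of $(\triangle'\otimes\mathrm{id})\triangle'-(\mathrm{id}\otimes\triangle')\triangle'$ becomes exactly the identity (\ref{perturb}), which proves the theorem.
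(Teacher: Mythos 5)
You follow the paper's proof essentially step for step: the derivation axioms and the covariant-derivation axiom for the perturbed tuple are checked to be automatic, the theorem is reduced to coassociativity of $\triangle+\overline{\triangle}$, and the coassociativity defect is split into the four families according to which of $\triangle,\overline{\triangle}$ occupies each slot. Your direct expansion of the $\overline{\triangle}\,\overline{\triangle}$ family (with the middle-slot terms cancelling after relabelling the two copies of $r,s$) is correct, and is the one place you genuinely deviate from the paper, which instead quotes \cite[Proposition 3.15]{brz} for that identity; your computation of $(\mathrm{id}\otimes\overline{\triangle})^{-}\circ\triangle$ also agrees with the paper's.

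However, the step you yourself single out as the main obstacle contains a genuine gap, and it cannot be closed as you describe. To cancel the spurious terms $\triangle(a)_{12}r_{23}$ and $s_{12}\triangle(a)_{23}$ coming from $(\mathrm{id}\otimes\overline{\triangle})^{-}\circ\triangle(a)$, you are forced to expand $\triangle(a r_{(1)})=a\cdot\delta_1(r_{(1)})+\triangle(a)\cdot r_{(1)}$ (first form of the covariant-derivation identity) and $\triangle(s_{(2)}a)=s_{(2)}\cdot\triangle(a)+\delta_2(s_{(2)})\cdot a$ (second form). The terms in which the differential falls on $r$ or $s$ are then $a\cdot(\delta_1\otimes\mathrm{id})(r)$ and $(\mathrm{id}\otimes\delta_2)(s)\cdot a$: they involve $\delta_1,\delta_2$, not $\triangle$, and hence do \emph{not} recombine into $a\cdot(\mathrm{id}\otimes\triangle)^{-}(r)-(\mathrm{id}\otimes\triangle)^{-}(s)\cdot a$ as you assert. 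Your auxiliary identity $a\cdot(\delta_1-\triangle)(b)=(\delta_2-\triangle)(a)\cdot b$ only rewrites the discrepancy as $(\delta_2-\triangle)(a)_{12}\,r_{23}+s_{12}\,(\delta_1-\triangle)(a)_{23}$, which has no reason to vanish. Carried out faithfully, your plan therefore terminates in a coassociativity criterion in which two of the coproducts of (\ref{perturb}) are replaced by $\delta_1$ and $\delta_2$, and that criterion differs from (\ref{perturb}) by the above correction term. You should be aware that the paper's own proof reaches (\ref{perturb}) only by expanding $\triangle(ar_{(1)})=a\cdot\triangle(r_{(1)})+\triangle(a)\cdot r_{(1)}$ and $\triangle(s_{(2)}a)=s_{(2)}\cdot\triangle(a)+\triangle(s_{(2)})\cdot a$, i.e.\ by using the ordinary Leibniz rule for $\triangle$, which is not among the covariant-bialgebra axioms (it amounts to assuming $a\cdot(\delta_1-\triangle)(b)=0$); so your proposal breaks down at exactly the point that the paper's argument passes over without justification, and neither derivation, as written, establishes the stated equivalence from the axioms alone.
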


\begin{proof}
It is easy to see that $\delta_r$ and $\delta_s$ are derivations, so the sums $\delta_1 + \delta_r$ and $\delta_2 + \delta_s$. We now show that $\triangle + \overline{\triangle}$ is a covariant derivation with respect to $(\delta_1 + \delta_r , \delta_2 + \delta_s)$. We have
\begin{align*}
(\triangle + \overline{\triangle}) (ab) =~& \triangle (ab) + \overline{\triangle} (ab) \\
=~& a \cdot \delta_1 (b) + \triangle (a) \cdot b + a \cdot \delta_r (b) + \overline{\triangle} (a) \cdot b = a \cdot (\delta_1 + \delta_r )(b) + (\triangle + \overline{\triangle})(a) \cdot b.
\end{align*} 
Similarly, we can show that $(\triangle + \overline{\triangle})(ab) = a \cdot (\triangle + \overline{\triangle})(b) + (\delta_2 + \delta_s) (a) \cdot  b.$ Thus we proved our claim. We are now left with the coassociativity of $\triangle + \overline{\triangle}$. We show that $\triangle + \overline{\triangle}$ is coassociative if and only if the condition (\ref{perturb}) holds. Note that
\begin{align}
(\mathrm{id} \otimes (\triangle + \overline{\triangle})) \circ ( \triangle + \overline{\triangle}) =~& (\mathrm{id} \otimes \triangle ) \circ \triangle + (\mathrm{id} \otimes \triangle) \circ \overline{\triangle} + (\mathrm{id} \otimes \overline{\triangle}) \circ \triangle + (\mathrm{id} \otimes \overline{\triangle}) \circ \overline{\triangle}, \label{perturb-1}\\
((\triangle + \overline{\triangle}) \otimes \mathrm{id} ) \circ (\triangle + \overline{\triangle}) =~& (\triangle \otimes \mathrm{id} ) \circ \triangle + (\triangle \otimes \mathrm{id} ) \circ \overline{\triangle} + (\overline{\triangle} \otimes \mathrm{id} ) \circ \triangle + (\overline{\triangle} \otimes \mathrm{id} ) \circ \overline{\triangle}. \label{perturb-2}
\end{align}
We have $(\mathrm{id} \otimes \triangle) \circ \triangle = (\triangle \otimes \mathrm{id} ) \circ \triangle$ as $\triangle$ is coassociative. Therefore, it follows from (\ref{perturb-1}) and (\ref{perturb-2}) that $(\triangle + \overline{\triangle})$ is coassociative if and only if
\begin{align*}
(\mathrm{id} \otimes \triangle - \triangle \otimes \mathrm{id} ) \circ \overline{\triangle} + (\mathrm{id} \otimes \overline{\triangle} - \overline{\triangle} \otimes \mathrm{id} ) \circ \triangle = (\overline{\triangle} \otimes \mathrm{id} - \mathrm{id} \otimes \overline{\triangle}) \circ \overline{\triangle},
\end{align*}
or equivalently,
\begin{align*}
(\mathrm{id} \otimes \triangle)^{-} \circ \overline{\triangle}(a) ~+~& (\mathrm{id} \otimes \overline{\triangle})^{-} \circ \triangle (a) \\
=~& a \cdot ( r_{13}r_{12} - r_{12} r_{23} + s_{23} r_{13}) - ( s_{13} r_{12} - s_{12} s_{23} + s_{23}s_{13} ) \cdot a.
\end{align*}
Here the right-hand side follows from \cite[Proposition 3.15]{brz}.
For $r = r_{(1)} \otimes r_{(2)}$ and $s = s_{(1)} \otimes s_{(2)}$, we have
\begin{align}\label{perr1}
&(\mathrm{id} \otimes \triangle)^{-} \circ \overline{\triangle}(a)  \nonumber \\
&=(\mathrm{id} \otimes \triangle - \triangle \otimes \mathrm{id} ) \big( a r_{(1)} \otimes r_{(2)} -  s_{(1)} \otimes s_{(2)}  a \big) \nonumber \\
&= a   r_{(1)} \otimes \triangle (r_{(2)}) - s_{(1)} \otimes \triangle (s_{(2)}  a) - \triangle (a  r_{(1)}) \otimes r_{(2)} + \triangle (s_{(1)}) \otimes s_{(2)} a \nonumber \\
&= a  r_{(1)} \otimes  \triangle ( r_{(2)})  - s_{(1)} \otimes s_{(2)} \cdot \triangle (a) - s_{(1)} \otimes \triangle (s_{(2)})  \cdot  a  \nonumber \\
&~~~ - a  \cdot \triangle (r_{(1)}) \otimes r_{(2)} - \triangle (a) \cdot r_{(1)} \otimes r_{(2)} +  \triangle ( s_{(1)}) \otimes s_{(2)} a \nonumber \\
&= a \cdot (( \mathrm{id} \otimes \triangle)(r)) - s_{12} \triangle(a)_{23} - ((\mathrm{id} \otimes \triangle)(s)) \cdot a - a \cdot ((\triangle \otimes \mathrm{id})(r)) - \triangle(a)_{12} r_{23} + ((\triangle \otimes \mathrm{id} )(s)) \cdot a  \nonumber \\
&= a \cdot (\mathrm{id} \otimes \triangle)^{-} (r) - (\mathrm{id} \otimes \triangle)^{-} (s) \cdot a - s_{12} \triangle(a)_{23} - \triangle(a)_{12} r_{23}.
\end{align}
On the other hand, by letting $\triangle (a) = a_{(1)} \otimes a_{(2)}$, we get
\begin{align}\label{perr2}
&(\mathrm{id} \otimes \overline{\triangle})^{-} \circ \triangle (a) \nonumber \\
&= (\mathrm{id} \otimes \overline{\triangle} - \overline{\triangle} \otimes \mathrm{id} ) (a_{(1)} \otimes a_{(2)}) \nonumber \\
&=  a_{(1)} \otimes a_{(2)} r_{(1)} \otimes r_{(2)} - a_{(1)} \otimes s_{(1)} \otimes s_{(2)} a_{(2)} - a_{(1)} r_{(1)} \otimes r_{(2)} \otimes a_{(2)} + s_{(1)} \otimes s_{(2)} a_{(1)} \otimes a_{(2)}  \nonumber \\
&= \triangle(a)_{12} r_{23} - s_{23} \triangle(a)_{13} - \triangle (a)_{13} r_{12} + s_{12} \triangle(a)_{23}.
\end{align}
Therefore, we get from (\ref{perr1}) and (\ref{perr2}) that
\begin{align*}
(\mathrm{id} \otimes \triangle)^{-} \circ \overline{\triangle}(a) + (\mathrm{id} \otimes \overline{\triangle})^{-}\circ \triangle (a) = a \cdot ((\mathrm{id} \otimes \triangle)^{-} (r)) - ((\mathrm{id} \otimes \triangle)^{-} (s)) \cdot a - s_{23} \triangle(a)_{13} - \triangle(a)_{13} r_{12}.
\end{align*}
The right-hand side is same as $a \cdot (r_{13}r_{12} - r_{12} r_{23} + s_{23} r_{13}) - (s_{13} r_{12} - s_{12} s_{23} + s_{23}s_{13}) \cdot a$ (equivalently, $(\triangle + \overline{\triangle})$ is coassociative) if and only if (\ref{perturb}) holds. 
\end{proof}

In the following, we construct a pre-Lie algebra from a suitable covariant bialgebra generalizing a result of Aguiar \cite{aguiar-pre-lie}. We need the following definition: A covariant bialgebra $(A, \mu, \triangle, \delta_1, \delta_2)$ is said to be a {\bf compatible covariant bialgebra} if 
\begin{align*}
(\mathrm{id} \otimes \delta_1) \circ \triangle = (\delta_2 \otimes \mathrm{id}) \circ \triangle.
\end{align*}

Any infinitesimal bialgebra is by definition a compatible covariant bialgebra. Another example of a compatible covariant bialgebra can be given by the following. Let $(A, \mu)$ be an associative algebra and $a, b \in A$ satisfying $a^2 = b^2 =  ba = 0$. Then $(r = a \otimes a, s = b \otimes b)$ is an associative Yang-Baxter pair. The corresponding quasitriangular covariant bialgebra $(A, \mu, \triangle', \delta_r, \delta_s)$ is a compatible covariant bialgebra.

The following result relates to compatible covariant bialgebras and pre-Lie algebras.

\begin{prop}
Let $(A, \mu, \triangle, \delta_1, \delta_2)$ be a compatible covariant bialgebra. Then $(A, \diamond)$ is a pre-Lie algebra, where $a \diamond b = b_{(1)} a b_{(2)}$, for $a, b \in A$ with $\triangle (b) = b_{(1)} \otimes b_{(2)}$.
\end{prop}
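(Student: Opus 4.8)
The plan is to verify the left pre-Lie identity directly, i.e.\ to show that the associator
\[
\mathrm{As}(a,b,c) := (a\diamond b)\diamond c - a\diamond(b\diamond c)
\]
is symmetric under $a\leftrightarrow b$. Writing $\triangle(b)=b_{(1)}\otimes b_{(2)}$, the definition reads $a\diamond b=b_{(1)}ab_{(2)}$, so that $(a\diamond b)\diamond c=c_{(1)}b_{(1)}ab_{(2)}c_{(2)}$. All the work is in computing $a\diamond(b\diamond c)$, and for that I first need the coproduct of $b\diamond c=c_{(1)}bc_{(2)}$.

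First I would compute $\triangle(c_{(1)}bc_{(2)})$ by applying the covariant-derivation identity $\triangle(xy)=x\cdot\triangle(y)+\delta_2(x)\cdot y$ twice, using that $\delta_2$ is a derivation, and then collapsing the iterated coproducts of $c$ by coassociativity. Feeding the three resulting terms into $a\diamond(-)$ gives
\begin{align*}
a\diamond(b\diamond c)=\underbrace{c_{(1)(1)}\,b\,c_{(1)(2)}\,a\,c_{(2)}}_{U_1}+\underbrace{c_{(1)}\,\delta_2(b)_{(1)}\,a\,\delta_2(b)_{(2)}\,c_{(2)}}_{U_2}+\underbrace{\delta_2(c_{(1)})_{(1)}\,a\,\delta_2(c_{(1)})_{(2)}\,b\,c_{(2)}}_{U_3}.
\end{align*}

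Next I would set $\epsilon_2:=\delta_2-\triangle$ and split $U_2$ and $U_3$. Replacing $\delta_2$ by $\triangle$ in $U_2$ reproduces $(a\diamond b)\diamond c$ verbatim, so this matching term cancels; replacing $\delta_2$ by $\triangle$ in $U_3$ gives $c_{(1)(1)}ac_{(1)(2)}bc_{(2)}$. After rewriting $U_1=c_{(1)(1)}bc_{(1)(2)}ac_{(2)}$ by coassociativity, the associator collapses to
\begin{align*}
\mathrm{As}(a,b,c)=-\,c_{(1)(1)}\,b\,c_{(1)(2)}\,a\,c_{(2)}-c_{(1)(1)}\,a\,c_{(1)(2)}\,b\,c_{(2)}-W_2-W_3,
\end{align*}
with $W_2=c_{(1)}\,\epsilon_2(b)_{(1)}\,a\,\epsilon_2(b)_{(2)}\,c_{(2)}$ and $W_3=\epsilon_2(c_{(1)})_{(1)}\,a\,\epsilon_2(c_{(1)})_{(2)}\,b\,c_{(2)}$. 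The first two terms are manifestly symmetric under $a\leftrightarrow b$; indeed, when $\delta_1=\delta_2=\triangle$ one has $\epsilon_2=0$ and this is exactly Aguiar's computation for infinitesimal bialgebras. So everything reduces to proving $W_2+W_3=W_2'+W_3'$, where the prime denotes the $a\leftrightarrow b$ swap.

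The hard part, and the only place the compatibility hypothesis is used, is this last identity. I would invoke two relations. Subtracting the two covariant-derivation identities yields, for all $x,y$, the defect relation $x\cdot\epsilon_1(y)=\epsilon_2(x)\cdot y$ with $\epsilon_1:=\delta_1-\triangle$; applying it with $x=b$, $y=c_{(2)}$ converts $W_2$ into $c_{(1)}\,b\,\epsilon_1(c_{(2)})_{(1)}\,a\,\epsilon_1(c_{(2)})_{(2)}$. On the other hand, subtracting coassociativity from the compatibility condition $(\mathrm{id}\otimes\delta_1)\circ\triangle=(\delta_2\otimes\mathrm{id})\circ\triangle$ gives $(\epsilon_2\otimes\mathrm{id})\triangle=(\mathrm{id}\otimes\epsilon_1)\triangle$, i.e.\ $\epsilon_2(c_{(1)})_{(1)}\otimes\epsilon_2(c_{(1)})_{(2)}\otimes c_{(2)}=c_{(1)}\otimes\epsilon_1(c_{(2)})_{(1)}\otimes\epsilon_1(c_{(2)})_{(2)}$; applying this to $W_3'$ produces the very same expression, so $W_2=W_3'$, and symmetrically $W_3=W_2'$. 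This forces $W_2+W_3=W_2'+W_3'$, establishing the pre-Lie identity. The main obstacle is purely organizational — tracking the several non-symmetric terms and recognizing that the defect relation and the compatibility condition are exactly the two inputs that pair them off.
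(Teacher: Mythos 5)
Your proof is correct, but it organizes the computation differently from the paper, so a comparison is worth making. The paper expands $\triangle(c_{(1)}bc_{(2)})$ by applying the covariant-derivation identity in its $\delta_1$-form to the outer product ($\triangle(xy)=x\cdot\delta_1(y)+\triangle(x)\cdot y$ with $x=c_{(1)}b$, $y=c_{(2)}$) and in its $\delta_2$-form to the inner factor ($\triangle(c_{(1)}b)=c_{(1)}\cdot\triangle(b)+\delta_2(c_{(1)})\cdot b$), never invoking the derivation property of $\delta_2$. With this mixed expansion the $\triangle(b)$-term cancels against $(a\diamond b)\diamond c$ immediately, and the associator collapses to exactly two terms,
\begin{align*}
(a\diamond b)\diamond c-a\diamond(b\diamond c)
=-\,c_{(1)}\,b\,\delta_1(c_{(2)})_{(1)}\,a\,\delta_1(c_{(2)})_{(2)}
-\,\delta_2(c_{(1)})_{(1)}\,a\,\delta_2(c_{(1)})_{(2)}\,b\,c_{(2)},
\end{align*}
which the compatibility condition $(\mathrm{id}\otimes\delta_1)\circ\triangle=(\delta_2\otimes\mathrm{id})\circ\triangle$, used exactly as stated, identifies crosswise with the two terms of the $a\leftrightarrow b$ swap --- no residual symmetric part, no auxiliary lemmas. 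Your route instead expands everything through $\delta_2$ (covariant identity for the outer product plus the derivation property for $\delta_2(c_{(1)}b)$), which forces you to introduce the defects $\epsilon_i=\delta_i-\triangle$ and to derive two auxiliary relations: the defect relation $x\cdot\epsilon_1(y)=\epsilon_2(x)\cdot y$ (this is where the $\delta_1$-form of the covariant identity secretly enters your argument) and the reduced compatibility $(\mathrm{id}\otimes\epsilon_1)\triangle=(\epsilon_2\otimes\mathrm{id})\triangle$, before the crosswise matching $W_2=W_3'$, $W_3=W_2'$ goes through. The two computations are term-by-term equivalent --- the paper's two leftover terms are precisely your symmetric pair plus your $W$-terms, unsplit --- but the trade-off is real: the paper's choice of expansion buys brevity, while your splitting makes it transparent that setting $\epsilon_1=\epsilon_2=0$ recovers Aguiar's computation for infinitesimal bialgebras, exhibiting the compatible covariant case as a perturbation of that one.
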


\begin{proof}
We have $( a \diamond b) \diamond c = c_{(1)} (a \diamond b) c_{(2)} = c_{(1)}b_{(1)} a b_{(2)} c_{(2)}$. On the other hand $a \diamond ( b \diamond c ) = a \diamond ( c_{(1)} b c_{(2)})$. To expand it, we observe that
\begin{align*}
\triangle ( c_{(1)} b c_{(2)}) =~& c_{(1)} b \cdot \delta_1 (c_{(2)}) + \triangle ( c_{(1)} b) \cdot c_{(2)} \\
=~& c_{(1)} b \cdot \delta_1 (c_{(2)}) + ( c_{(1)} \cdot \triangle (b) + \delta_2 (c_{(1)})\cdot b) \cdot c_{(2)} \\
=~& c_{(1)} b \cdot ( c_{(2)(1)}^1 \otimes c_{(2)(2)}^1) + \big( c_{(1)} \cdot (b_{(1)} \otimes b_{(2)}) + (c_{(1)(1)}^2 \otimes c_{(1)(2)}^2) \cdot b \big) \cdot c_{(2)} \\
=~& c_{(1)} b c_{(2)(1)}^1 \otimes c_{(2)(2)}^1 + c_{(1)} b_{(1)} \otimes b_{(2)} c_{(2)} + c^2_{(1)(1)} \otimes c^2_{(1)(2)} b c_{(2)}. 
\end{align*}
Here we have used the notations $\delta_1 (a) = a_{(1)}^1 \otimes a_{(2)}^1$ and $\delta_2 (a) = a_{(1)}^2 \otimes a_{(2)}^2$. Hence, we have
\begin{align*}
a \diamond ( b \diamond c) = a \diamond ( c_{(1)} b c_{(2)}) =   c_{(1)} b c_{(2)(1)}^1 a c_{(2)(2)}^1 + c_{(1)} b_{(1)} a b_{(2)} c_{(2)} + c^2_{(1)(1)} a c^2_{(1)(2)} b c_{(2)}.
\end{align*}
Therefore, 
\begin{align}\label{pre-1}
(a \diamond b ) \diamond c - a \diamond ( b \diamond c) = - c_{(1)} b c_{(2)(1)}^1 a c_{(2)(2)}^1 - c^2_{(1)(1)} a c^2_{(1)(2)} b c_{(2)}.
\end{align}
By interchanging $a$ and $b$, we get
\begin{align}\label{pre-2}
(b \diamond a ) \diamond c - b \diamond ( a \diamond c) = - c_{(1)} a c_{(2)(1)}^1 b c_{(2)(2)}^1 - c^2_{(1)(1)} b c^2_{(1)(2)} a c_{(2)}.
\end{align}
Finally, since $(A, \mu, \triangle, \delta_1, \delta_2)$ is a compatible covariant algebra, we have
$(\mathrm{id} \otimes \delta_1) \circ \triangle (c) = (\delta_2 \otimes \mathrm{id}) \circ \triangle (c)$, or equivalently,
\begin{align*}
c_{(1)} \otimes c_{(2)(1)}^1 \otimes c_{(2)(2)}^1 = c_{(1)(1)}^2 \otimes c_{(1)(2)}^2 \otimes c_{(2)}.
\end{align*}
Thus, it follows from (\ref{pre-1}) and (\ref{pre-2}) that $(a \diamond b) \diamond c - a \diamond ( b \diamond c) = (b \diamond a) \diamond c - b \diamond ( a \diamond c) $ holds. Hence the proof.
\end{proof}

Let $A$ be an associative algebra and $(r,s)$ be a skew-symmetric associative Yang-Baxter pair. Consider the associative algebra structure on $A^*$ given in (\ref{skew-aybp-ass}) induced from the generalized Rota-Baxter system $(r^\sharp, s^\sharp)$. On the other hand, we can consider the quasitriangular covariant bialgebra $(A, \mu, \overline{\triangle}, \delta_r, \delta_s)$ given in (\ref{quasi-p}). Since $\overline{\triangle}: A \rightarrow A \otimes A, ~ \overline{\triangle} (a) = a \cdot r - s \cdot a$ is coassociative, its dual defines an associative multiplication on $A^*$. The comparison between the above two associative structures on $A^*$ is given by the following.

\begin{prop}
Let $(r,s)$ be a skew-symmetric associative Yang-Baxter pair on an associative algebra $A$. Then the associative structure on $A^*$ induced from the generalized Rota-Baxter system $(r^\sharp, s^\sharp)$ coincides with the dual of the coassociative coproduct $\overline{\triangle} : A \rightarrow A \otimes A, ~ \overline{\triangle}(a) = a \cdot r-s \cdot a$.
\end{prop}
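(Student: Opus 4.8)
The plan is to prove that the two associative products on $A^*$ agree as bilinear forms; since each is determined by its values on $A$, it suffices to evaluate both on an arbitrary $a \in A$ and compare the resulting scalars. Throughout I would write $r = r_{(1)} \otimes r_{(2)}$, $s = s_{(1)} \otimes s_{(2)}$ and keep at hand the skew-symmetry relations $r_{(1)} \otimes r_{(2)} = - r_{(2)} \otimes r_{(1)}$ and $s_{(1)} \otimes s_{(2)} = - s_{(2)} \otimes s_{(1)}$.

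First I would unwind the product coming from the generalized Rota-Baxter system. By (\ref{skew-aybp-ass}) we have $\alpha * \beta = \alpha(r_{(2)})\,(r_{(1)} \cdot \beta) + \beta(s_{(2)})\,(\alpha \cdot s_{(1)})$. Inserting the coadjoint actions $(c \cdot \beta)(a) = \beta(ac)$ and $(\alpha \cdot c)(a) = \alpha(ca)$ and evaluating at $a$ produces the closed expression $(\alpha * \beta)(a) = \alpha(r_{(2)})\,\beta(a r_{(1)}) + \beta(s_{(2)})\,\alpha(s_{(1)} a)$.

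Next I would compute the product dual to $\overline{\triangle}$. Expanding $\overline{\triangle}(a) = a \cdot r - s \cdot a$ via the $A$-bimodule structure on $A \otimes A$ gives $\overline{\triangle}(a) = a r_{(1)} \otimes r_{(2)} - s_{(1)} \otimes s_{(2)} a$. Dualizing the coproduct, that is pairing the first tensor leg against $\beta$ and the second against $\alpha$, yields the scalar $\beta(a r_{(1)})\,\alpha(r_{(2)}) - \beta(s_{(1)})\,\alpha(s_{(2)} a)$. To finish I would match this against the previous display: the two $r$-terms are literally equal, while the skew-symmetry of $s$ rewrites $-\,\beta(s_{(1)})\,\alpha(s_{(2)} a)$ as $\beta(s_{(2)})\,\alpha(s_{(1)} a)$, which is precisely the $s$-term of $\alpha * \beta$.

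The step I expect to be the real obstacle is not any deep idea but the careful bookkeeping. One must track correctly whether each coadjoint action multiplies the algebra element on the left or on the right inside the functional, so that $s_{(1)} a$ rather than $a s_{(1)}$ appears, and one must fix the dualization convention for $\overline{\triangle}$, pairing $\beta \otimes \alpha$ rather than $\alpha \otimes \beta$, so that the computation recovers $*$ itself and not its opposite. With those conventions pinned down, matching the $s$-terms forces exactly one application of skew-symmetry with the correct sign, and the claimed identity of products follows.
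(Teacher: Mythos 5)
Your proposal is correct and follows essentially the same route as the paper: evaluate both products on an arbitrary $a\in A$, use the swapped dualization convention (pairing $\beta$ with the first tensor leg and $\alpha$ with the second, exactly the paper's map $\Xi$), and invoke skew-symmetry of $s$ once to match the $s$-terms. The only cosmetic difference is that the paper applies skew-symmetry when expanding $s^\sharp(\beta)$ whereas you apply it at the final matching step; the computation is otherwise identical.
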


\begin{proof}
Note that the dual of $\overline{\triangle}$ that defines an associative product on $A^*$ is given by the following composition
\begin{align*}
A^* \otimes A^* \xrightarrow{~~~~ \Xi~~~~} (A \otimes A)^* \xrightarrow{\overline{\triangle}^*} A^*.
\end{align*}
Here $\Xi : A^* \otimes A^* \rightarrow (A \otimes A)^*$ is the map $\Xi ( \alpha \otimes \beta) (a \otimes b) = \alpha ( b) \beta (a)$. (Note that our map $\Xi$ is different than the standard one which is $\Xi ( \alpha \otimes \beta) (a \otimes b) = \alpha ( a) \beta (b)$.) For any $a \in A$, we have
\begin{align*}
\langle \overline{\triangle}^* (\Xi (\alpha \otimes \beta)), a \rangle =  \langle \Xi (\alpha \otimes \beta), \overline{\triangle}(a) \rangle =~&  \langle \Xi (\alpha \otimes \beta), a r_{(1)} \otimes r_{(2)} - s_{(1)} \otimes s_{(2)} a \rangle \\
=~& \alpha (r_{(2)}) \beta (a r_{(1)} ) - \alpha (s_{(2)}a) \beta (s_{(1)}).
\end{align*}
On the other hand,
\begin{align*}
\langle r^\sharp (\alpha ) \cdot \beta + \alpha \cdot s^\sharp (\beta ) , a \rangle = \alpha (r_{(2)}) \langle r_{(1)} \cdot \beta, a \rangle - \beta (s_{(1)}) \langle \alpha \cdot s_{(2)}, a \rangle = \alpha (r_{(2)}) \beta ( a r_{(1)}) - \beta (s_{(1)}) \alpha (s_{(2)}a ).
\end{align*}
Thus it follows that $\overline{\triangle}^* (\Xi (\alpha \otimes \beta)) = r^\sharp (\alpha ) \cdot \beta + \alpha \cdot s^\sharp (\beta )$. Hence the proof.
\end{proof}

Next, we introduce a notion of weak morphism between covariant bialgebras and relate them with weak morphism between associative Yang-Baxter pairs.

Let $(A, \mu, \triangle, \delta_1, \delta_2)$ and $(A', \mu', \triangle', \delta_1', \delta_2')$ be two covariant bialgebras. A morphism between them is given by an algebra map $\phi : A \rightarrow A'$  that is also a coalgebra map satisfying additionally $(\phi \otimes \phi ) \circ \delta_1 = \delta_1' \circ \phi$ and $(\phi \otimes \phi ) \circ \delta_2 = \delta_2' \circ \phi$. It is called an isomorphism of covariant bialgebras if $\phi$ is a linear isomorphism.

%Here, we introduce a weaker notion of morphism between covariant bialgebras whose underlying algebras are the same.

\begin{defn} A {\bf weak morphism} of covariant bialgebras from $(A, \mu, \triangle, \delta_1, \delta_2)$ to $(A, \mu, \triangle', \delta_1', \delta_2')$ consists of a triple $(\phi, \varphi, \psi)$ in which $\phi, \varphi : A \rightarrow A$ are algebra maps, $\psi : (A, \triangle') \rightarrow (A, \triangle)$ is a coalgebra map satisfying $(\psi \otimes \psi ) \circ \delta_1' = \delta_1 \circ \psi $, $(\psi \otimes \psi ) \circ \delta_2' = \delta_2 \circ \psi$ and the followings $\psi (  a\phi (b) ) = \psi (a) b$, $\psi ( \varphi (a) b) = a \psi (b)$. 
\end{defn}

Let $(A, \mu, \triangle, \delta_1, \delta_2)$ and $(A, \mu, \triangle', \delta_1', \delta_2')$ be two covariant bialgebras. Then it is easy to see that $\phi : A \rightarrow A$ is an isomorphism of covariant bialgebras if and only if $(\phi, \phi, \phi^{-1})$ is a weak isomorphism of covariant bialgebras.

%The following result relates weak morphisms of skew-symmetric associative Yang-Baxter pairs and weak morphisms between corresponding covariant bialgebras.

\begin{prop}
Let $(A, \mu)$ be an associative algebra and $(r,s), (r', s')$ be two skew-symmetric associative Yang-Baxter pairs. If $(\phi, \varphi, \psi)$ is a weak morphism (resp. weak isomorphism) from $(r,s)$ to $(r',s')$, then $(\phi, \varphi, \psi)$ is a weak morphism (resp. weak isomorphism) of covariant bialgebras from $(A, \mu, \overline{\triangle}, \delta_{r}, \delta_{s})$ to $(A, \mu, \overline{\triangle}', \delta_{r'}, \delta_{s'})$.
\end{prop}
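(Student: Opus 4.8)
The plan is to check the four defining requirements for $(\phi,\varphi,\psi)$ to be a weak morphism of covariant bialgebras from $(A,\mu,\overline{\triangle},\delta_{r},\delta_{s})$ to $(A,\mu,\overline{\triangle}',\delta_{r'},\delta_{s'})$. Two of them are free of charge: $\phi,\varphi$ are algebra maps by hypothesis, and the intertwining identities $\psi(a\phi(b))=\psi(a)b$ and $\psi(\varphi(a)b)=a\psi(b)$ are literally part of the data of a weak morphism of skew-symmetric associative Yang--Baxter pairs, so they transfer verbatim. It therefore remains to verify (a) that $\psi\colon(A,\overline{\triangle}')\to(A,\overline{\triangle})$ is a coalgebra map, i.e. $\overline{\triangle}\circ\psi=(\psi\otimes\psi)\circ\overline{\triangle}'$, and (b) the two derivation identities $(\psi\otimes\psi)\circ\delta_{r'}=\delta_{r}\circ\psi$ and $(\psi\otimes\psi)\circ\delta_{s'}=\delta_{s}\circ\psi$.

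The key preliminary step is to extract, from the given tensor conditions $(\psi\otimes\mathrm{id}_A)(r')=(\mathrm{id}_A\otimes\phi)(r)$ and $(\psi\otimes\mathrm{id}_A)(s')=(\mathrm{id}_A\otimes\varphi)(s)$, their transposed companions $(\mathrm{id}_A\otimes\psi)(r')=(\phi\otimes\mathrm{id}_A)(r)$ and $(\mathrm{id}_A\otimes\psi)(s')=(\varphi\otimes\mathrm{id}_A)(s)$. This is exactly where skew-symmetry of $r,s,r',s'$ enters: applying the flip $\tau$ to a first-leg identity and then invoking $\tau(r')=-r'$ and $\tau(r)=-r$ turns it into the corresponding second-leg identity. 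Together with the two intertwining identities, these four tensor relations are the only tools I will use, and the verifications proceed by pulling the argument $a$ onto one leg of $r'$ or $s'$, applying the appropriate tensor relation to trade primed generators for unprimed ones (at the cost of a $\phi$ or $\varphi$), and finally absorbing that $\phi$ or $\varphi$ through $\psi$.

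For the coalgebra condition I would expand $\overline{\triangle}'(a)=ar'_{(1)}\otimes r'_{(2)}-s'_{(1)}\otimes s'_{(2)}a$ and apply $\psi\otimes\psi$. On the first summand the transposed $r$-relation gives $ar'_{(1)}\otimes\psi(r'_{(2)})=a\phi(r_{(1)})\otimes r_{(2)}$, and then $\psi(a\phi(r_{(1)}))=\psi(a)r_{(1)}$ produces $\psi(a)r_{(1)}\otimes r_{(2)}$; on the second summand the $s$-relation gives $\psi(s'_{(1)})\otimes s'_{(2)}a=s_{(1)}\otimes\varphi(s_{(2)})a$, and then $\psi(\varphi(s_{(2)})a)=s_{(2)}\psi(a)$ produces $s_{(1)}\otimes s_{(2)}\psi(a)$. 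The two pieces assemble to $\psi(a)\cdot r-s\cdot\psi(a)=\overline{\triangle}(\psi(a))$, as required. This is also consistent with the earlier results, since $\overline{\triangle}^{*}$ is the associative product on $A^{*}$ induced by $(r^\sharp,s^\sharp)$ and a morphism of generalized Rota--Baxter systems induces a morphism of the associated products, so $\psi^{*}$ is multiplicative and $\psi$ is dually a coalgebra map.

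The derivation identities are the crux and where I expect the real difficulty. Writing $\delta_{r}(a)=a\cdot r-r\cdot a$, the $a\cdot r$ half is handled exactly as the first summand of the coproduct computation. The remaining half requires evaluating $\psi(r'_{(1)})\otimes\psi(r'_{(2)}a)$, and the $r$-relation only reduces this to $r_{(1)}\otimes\psi(\phi(r_{(2)})a)$; matching it against the desired $r_{(1)}\otimes r_{(2)}\psi(a)$ would need an identity of the shape $\psi(\phi(c)a)=c\psi(a)$, in which $\phi$ sits on the \emph{left} of the argument, the opposite handedness to the two intertwining identities the hypothesis supplies. The symmetric phenomenon afflicts $\delta_{s}$, whose $a\cdot s$ half demands $\psi(a\varphi(c))=\psi(a)c$. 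Bridging this handedness mismatch is the step I regard as the principal obstacle: the plan is to feed the second-leg tensor relations and the flip $\tau$ back into these expressions, together with auxiliary consequences of the intertwining identities such as $\psi(\varphi(x))\,y=x\,\psi(\phi(y))$ (obtained by expanding $\psi(\varphi(x)\phi(y))$ by each identity), and to check that on the particular legs of the skew-symmetric $r$ and $s$ they recombine to $r_{(2)}\psi(a)$ and $\psi(a)s_{(1)}$. It is precisely here that skew-symmetry must do more than merely package the transposed tensor relations, and this is the computation I would scrutinise most carefully; the weak-isomorphism case then follows at once, since $\phi,\varphi,\psi$ being bijective makes every step reversible.
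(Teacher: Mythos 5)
Your route differs from the paper's: the paper never computes on $A$ at all, but dualizes, using that $\psi$ is a coalgebra map iff $\psi^*$ is an algebra map for the dual products, and then feeds in its earlier proposition identifying weak morphisms of skew-symmetric Yang--Baxter pairs with morphisms $(\phi,\varphi,\psi^*)$ of the generalized Rota--Baxter systems $(r^\sharp,s^\sharp)\to(r'^\sharp,s'^\sharp)$. You instead verify the conditions directly on $A$, after first deriving the transposed tensor relations $(\mathrm{id}\otimes\psi)(r')=(\phi\otimes\mathrm{id})(r)$ and $(\mathrm{id}\otimes\psi)(s')=(\varphi\otimes\mathrm{id})(s)$ from skew-symmetry; that derivation is correct, and your verification of the coalgebra condition $(\psi\otimes\psi)\circ\overline{\triangle}'=\overline{\triangle}\circ\psi$ is complete and is the exact primal counterpart of the paper's dual computation.

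However, the proposal is not a proof, because the two derivation conditions are left as a plan, and the obstacle you flag there is genuine and is not removed by the tools you propose. Your own reduction gives $(\psi\otimes\psi)\,\delta_{r'}(a)=\psi(a)r_{(1)}\otimes r_{(2)}-r_{(1)}\otimes\psi(\phi(r_{(2)})a)$, so what is needed is $r_{(1)}\otimes\psi(\phi(r_{(2)})a)=r_{(1)}\otimes r_{(2)}\psi(a)$, i.e.\ an intertwining identity with $\phi$ on the \emph{left} of the argument; the hypotheses only give $\psi(\varphi(c)a)=c\psi(a)$. Neither the flip, nor the auxiliary relation $\psi(\varphi(x))y=x\psi(\phi(y))$, nor skew-symmetry repairs this: the weak-morphism axioms tie $r$ to $\phi$ and $s$ to $\varphi$, which is precisely the asymmetry of $\overline{\triangle}(a)=a\cdot r-s\cdot a$ (hence the coalgebra condition goes through) but is the wrong pairing for $\delta_r(a)=a\cdot r-r\cdot a$ and for $\delta_s$. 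It is worth knowing that the paper's own proof breaks at exactly the same point: its claim that the dual identity for $\delta_r$ follows by ``the same calculation replacing $s$ by $r$'' would require $\varphi\circ r^\sharp=r'^\sharp\circ\psi^*$ (and, for $\delta_s$, $\phi\circ s^\sharp=s'^\sharp\circ\psi^*$), whereas the hypotheses supply only $\phi\circ r^\sharp=r'^\sharp\circ\psi^*$ and $\varphi\circ s^\sharp=s'^\sharp\circ\psi^*$. So your diagnosis of the crux is accurate --- the statement seems to need an extra assumption, e.g.\ $\phi=\varphi$, or that $\phi$ and $\varphi$ agree on the relevant legs of $r$ and $s$ --- but as it stands your argument, like the paper's, does not establish the compatibilities $(\psi\otimes\psi)\circ\delta_{r'}=\delta_r\circ\psi$ and $(\psi\otimes\psi)\circ\delta_{s'}=\delta_s\circ\psi$.
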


\begin{proof}
To prove that $(\phi, \varphi, \psi)$ is a weak morphism of covariant bialgebras, it remains to show that $\psi : (A, \overline{\triangle}') \rightarrow (A, \overline{\triangle})$ is a coalgebra map and satisfying $(\psi \otimes \psi) \circ \delta_{r'} = \delta_{r} \circ \psi$, ~ $(\psi \otimes \psi) \circ \delta_{s'} = \delta_{s} \circ \psi$. Note that $\psi$ is a coalgebra map if and only if $\psi^*$ is an algebra map. For any $\alpha , \beta \in A^*$ and $a \in A$, we have
\begin{align*}
\langle  \psi^* (\alpha *_{(r,s)} \beta ), a \rangle 
%=~& \langle \alpha \star_{(r,s)} \beta , \psi (a) \rangle \\
=~& \langle {r^\sharp (\alpha)} \cdot \beta  + \alpha \cdot s^\sharp (\beta) , ~ \psi (a) \rangle \\
=~& \langle \beta, \psi (a) r^\sharp (\alpha ) \rangle  + \langle \alpha, s^\sharp (\beta) \psi (a) \rangle \\
=~& \langle \beta , \psi (a  \phi r^\sharp (\alpha)) \rangle + \langle \alpha, \psi (\varphi s^\sharp (\beta) a ) \rangle \\
=~& \langle \beta , \psi (a  r'^\sharp \psi^* (\alpha)) \rangle + \langle \alpha, \psi ( s'^\sharp \psi^* (\beta)  a ) \rangle \\
=~& \langle \psi^* \beta , a  r'^\sharp \psi^* (\alpha) \rangle + \langle \psi^* \alpha, s'^\sharp \psi^* (\beta) a  \rangle \\
=~& \langle {(r'^\sharp \psi^* \alpha)} \cdot (\psi^* \beta) , a \rangle + \langle  (\psi^* \alpha) \cdot {(s'^\sharp \psi^* \beta)} , a \rangle = \langle \psi^*(\alpha ) *_{(r',s')} \psi^* (\beta) , a \rangle.
\end{align*}
This proves that $\psi^* $ is an algebra map. Moreover, the condition $(\psi \otimes \psi) \circ \delta_{r'} = \delta_{r} \circ \psi$ is equivalent to $\psi^* ( r^\sharp (\alpha) \cdot \beta + \alpha \cdot r^\sharp (\beta)) = r'^\sharp \psi^* (\alpha) \cdot \psi^* (\beta) + \psi^*(\alpha ) \cdot r'^\sharp \psi^* (\beta)$, the proof of which is similar to the above calculation (replacing $s$ by $r$). Similarly, for the condition $(\psi \otimes \psi) \circ \delta_{s'} = \delta_{s} \circ \psi$. Hence the proof.
\end{proof}

\subsection{Generalized averaging systems}
In this subsection, we consider a notion of the generalized averaging system as a generalization of averaging operator \cite{pei-guo} in the presence of bimodules. We will see that they are particular cases of generalized Rota-Baxter systems.

Let $A$ be an associative algebra and $M$ be an $A$-bimodule.

\begin{defn}
A {\bf generalized left (resp. right) averaging system} consists of a pair $(R, S)$ of linear maps from $M$ to $A$ satisfying
\begin{align*}
\begin{cases}
R(u) R(v) = R( R(u) \cdot v ) \\
S(u) S(v) = S( R(u) \cdot v ),
\end{cases}
\qquad \bigg(\mathrm{rsep.} ~~~~
\begin{cases}
R(u) R(v) = R( u \cdot S(v) ) \\
S(u) S(v) = S( u \cdot S(v) ),
\end{cases}
\bigg)
~~~ \text{ for } u,v \in M.
\end{align*}
A generalized averaging system is a pair $(R, S)$ which is both a left averaging system and a right averaging system.
\end{defn}

\begin{defn} \cite{loday}
An {\bf associative dialgebra} is a vector space $D$ together with bilinear maps $\dashv, \vdash : D \otimes D \rightarrow D$ satisfying the following identities
\begin{align*}
& a \dashv (b \dashv c) = (a \dashv b) \dashv c = a \dashv (b \vdash c),\\
& (a \vdash b) \dashv c = a \vdash (b \dashv c),\\
& (a \dashv b) \vdash c = a \vdash (b \vdash c) = (a \vdash b ) \vdash c, ~~~ \text{ for all } a, b , c \in D.
\end{align*}
\end{defn}

\begin{prop}
Let $(R, S)$ be generalized averaging system on $M$ over the algebra $A$. Then $M$ carries an associative dialgebra structure with products
\begin{align*}
u \dashv v = u \cdot S(v) ~~~~ \text{ and } ~~~~ u \vdash v = R(u) \cdot v, ~ \text{ for } u,v \in M.
\end{align*}
\end{prop}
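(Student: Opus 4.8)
The plan is to verify the dialgebra axioms directly by substituting the definitions $u \dashv v = u \cdot S(v)$ and $u \vdash v = R(u) \cdot v$, using the bimodule compatibilities together with the averaging relations. The key preliminary observation is that, because $(R,S)$ is simultaneously a left and a right generalized averaging system, the four defining equations can be combined into the chains
\begin{align*}
R(u) R(v) = R(R(u) \cdot v) = R(u \cdot S(v)) \qquad \text{and} \qquad S(u) S(v) = S(R(u) \cdot v) = S(u \cdot S(v)),
\end{align*}
valid for all $u, v \in M$. These equalities are exactly what allow a single intermediate expression to be matched against all of the terms occurring in a given dialgebra axiom.

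First I would treat the left-bar axioms $a \dashv (b \dashv c) = (a \dashv b) \dashv c = a \dashv (b \vdash c)$. Computing $(a \dashv b) \dashv c = (a \cdot S(b)) \cdot S(c)$ and applying the right-action compatibility $(u \cdot x) \cdot y = u \cdot (xy)$ reduces it to $a \cdot (S(b) S(c))$. Rewriting $S(b)S(c)$ as $S(b \cdot S(c))$ recovers $a \dashv (b \dashv c)$, while rewriting it as $S(R(b) \cdot c)$ recovers $a \dashv (b \vdash c)$; hence all three terms equal $a \cdot (S(b) S(c))$. The right-bar axioms $(a \dashv b) \vdash c = a \vdash (b \vdash c) = (a \vdash b) \vdash c$ are handled by the mirror-image argument with $R$ in place of $S$: the left-action compatibility $(xy) \cdot u = x \cdot (y \cdot u)$ collapses each term to $(R(a) R(b)) \cdot c$, and the three ways of expressing $R(a)R(b)$ coming from the chain above match the three listed terms.

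The one remaining relation, the mixed axiom $(a \vdash b) \dashv c = a \vdash (b \dashv c)$, requires no averaging hypothesis: both sides equal $R(a) \cdot (b \cdot S(c))$ by the single bimodule identity $(x \cdot u) \cdot y = x \cdot (u \cdot y)$. The whole argument is routine computation; the only real content is the bookkeeping noted above, namely that precisely the axioms with three equal terms are the ones requiring both the left and the right averaging conditions at once, while the mixed axiom is purely a consequence of the bimodule structure.
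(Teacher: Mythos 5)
Your proof is correct and takes essentially the same route as the paper's: both arguments collapse each chain of dialgebra identities to a common intermediate expression ($a \cdot (S(b)S(c))$ for the left-bar chain, $(R(a)R(b)) \cdot c$ for the right-bar chain) via the bimodule compatibilities, then match the individual terms using the left and right averaging relations, with the mixed axiom $(a \vdash b) \dashv c = a \vdash (b \dashv c)$ following from the bimodule structure alone. Your explicit bookkeeping of which axioms require which hypothesis is a nice presentational touch but not a different method.
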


\begin{proof}
For any $u,v,w \in M$, we have
\begin{align*}
&u \dashv ( v \dashv w ) = u \cdot S ( v \cdot S(w)) = u \cdot ( S(v) S(w)) = \begin{cases} ( u \cdot S(v)) \cdot S(w) = (u \dashv v) \dashv w \\
u \cdot S (R(v) \cdot w) = u \dashv ( v \vdash w),
\end{cases}\\
&( u \vdash v) \dashv w = (R(u) \cdot v) \cdot S(w) = R(u) \cdot (v \cdot S(w)) = u \vdash (v \dashv w),\\
&(u \dashv v) \vdash w = R (u \cdot S(v)) \cdot w = (R(u) R(v)) \cdot w= \begin{cases} R(u ) \cdot ( R(v) \cdot w ) = u \vdash ( v \vdash w) \\
R ( R(u) \cdot v) \cdot w = (u \vdash v) \vdash w.
\end{cases}
\end{align*}
Hence the proof.
\end{proof}

\begin{prop}
Let $r = r_{(1)} \otimes r_{(2)}$, $s = s_{(1)} \otimes s_{(2)}$ be two elements of $A^{\otimes 2}$ such that $r_{13} r_{12} = r_{12} r_{23}$ and $s_{13} r_{12} = s_{12} s_{23}$. Then the pair $(R,S)$ of linear maps on $A$ defined by $R(a) = r_{(1)} a r_{(2)}$ and $S(a) = s_{(1)} a s_{(2)}$ is a left averaging system on $A$.
\end{prop}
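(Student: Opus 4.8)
The goal is to verify the two defining identities of a left averaging system on the adjoint bimodule, that is, $R(a)R(b) = R(R(a)\cdot b) = R(R(a)b)$ and $S(a)S(b) = S(R(a)\cdot b) = S(R(a)b)$ for all $a, b \in A$. The plan is to obtain each identity by applying a single auxiliary linear map to the corresponding hypothesis, exactly in the spirit of the auxiliary maps $(\beta \otimes \alpha)^\sharp$ and ${}^\sharp(\beta \otimes \alpha)$ used above for skew-symmetric associative Yang-Baxter pairs. For fixed $a, b \in A$ I would introduce the linear map
\begin{align*}
\mu_{a,b} : A^{\otimes 3} \rightarrow A, \qquad \mu_{a,b}(x \otimes y \otimes z) = x\, a\, y\, b\, z,
\end{align*}
and evaluate it on the degree-three elements occurring in the two hypotheses.

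For the first identity, write $r = r_{(1)} \otimes r_{(2)}$ and let $\tilde{r}$ denote a second copy. A direct expansion from the definitions in Subsection \ref{subsec-aybp} gives $r_{12} r_{23} = r_{(1)} \otimes r_{(2)} \tilde{r}_{(1)} \otimes \tilde{r}_{(2)}$, whence $\mu_{a,b}(r_{12} r_{23}) = r_{(1)}\, a\, r_{(2)} \tilde{r}_{(1)}\, b\, \tilde{r}_{(2)} = R(a) R(b)$. On the other side, $r_{13} r_{12} = \tilde{r}_{(1)} r_{(1)} \otimes r_{(2)} \otimes \tilde{r}_{(2)}$, so $\mu_{a,b}(r_{13} r_{12}) = \tilde{r}_{(1)} r_{(1)}\, a\, r_{(2)}\, b\, \tilde{r}_{(2)} = R(R(a) b)$. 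Since the first hypothesis reads $r_{13} r_{12} = r_{12} r_{23}$, applying $\mu_{a,b}$ to both sides yields $R(R(a) b) = R(a) R(b)$, which is precisely the first averaging identity.

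For the second identity I would feed the same map $\mu_{a,b}$ into the hypothesis $s_{13} r_{12} = s_{12} s_{23}$. With $\tilde{s}$ a second copy of $s$, one has $s_{12} s_{23} = s_{(1)} \otimes s_{(2)} \tilde{s}_{(1)} \otimes \tilde{s}_{(2)}$, so that $\mu_{a,b}(s_{12} s_{23}) = s_{(1)}\, a\, s_{(2)} \tilde{s}_{(1)}\, b\, \tilde{s}_{(2)} = S(a) S(b)$, while $s_{13} r_{12} = s_{(1)} r_{(1)} \otimes r_{(2)} \otimes s_{(2)}$ gives $\mu_{a,b}(s_{13} r_{12}) = s_{(1)} r_{(1)}\, a\, r_{(2)}\, b\, s_{(2)} = S(R(a) b)$. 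Applying $\mu_{a,b}$ to the hypothesis then produces $S(R(a) b) = S(a) S(b)$, the second averaging identity, and the two together say that $(R, S)$ is a left averaging system.

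The single point requiring attention---and the main potential pitfall---is the Sweedler bookkeeping for $r_{13} r_{12}$. Because this product multiplies two first-leg factors into a single tensor slot, the identification $\mu_{a,b}(r_{13} r_{12}) = R(R(a)b)$ really involves two independent summation indices, and one must either track both carefully or recognize the harmless relabeling of dummy indices that rewrites $r_{13} r_{12}$ as $\tilde{r}_{(1)} r_{(1)} \otimes r_{(2)} \otimes \tilde{r}_{(2)}$ before inserting $a$ and $b$. Once this is handled, every remaining step is a routine expansion; note in particular that skew-symmetry of $r$ and $s$ is never used, so the conclusion holds for arbitrary $r, s$ satisfying the two displayed relations.
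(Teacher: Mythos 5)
Your proof is correct and is essentially the paper's own argument: the paper likewise expands $r_{13}r_{12}$, $r_{12}r_{23}$, $s_{13}r_{12}$ and $s_{12}s_{23}$ in Sweedler notation and then ``replaces the first tensor product by $a$ and the second by $b$,'' which is exactly your insertion map $\mu_{a,b}(x\otimes y\otimes z)=x\,a\,y\,b\,z$. The only difference is cosmetic (you name the map explicitly and flag the dummy-index relabeling in $r_{13}r_{12}$), and your remark that skew-symmetry is never used is also accurate.
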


\begin{proof}
From the hypothesis, we have
\begin{align*}
r_{(1)} \tilde{r}_{(1)} \otimes \tilde{r}_{(2)} \otimes r_{(2)} = r_{(1)} \otimes r_{(2)} \tilde{r}_{(1)} \otimes \tilde{r}_{(2)} \quad \text{ and } \quad
s_{(1)} r_{(1)} \otimes r_{(2)} \otimes s_{(2)} = s_{(1)} \otimes s_{(2)} \tilde{s}_{(1)} \otimes \tilde{s}_{(2)}.
\end{align*}
In the above two identities, replacing the first tensor product by $a$ and the second product by $b$, and using the definition of $R$ and $S$, we get respectively $R(R(a)b) = R(a) R(b)$ and $S(R(a) b) = S(a) S(b)$. Hence the proof.
\end{proof}

\begin{remark}
Similar to the proof of above proposition, if $r,s$ satisfies $r_{12} r_{23} = s_{23} r_{13}$ and $s_{12} s_{23} = s_{23} s_{13}$, then the above-defined $(R,S)$ is a right averaging system. Therefore, if $r,s$ satisfies 
\begin{align}\label{fs-system}
r_{13} r_{12} = r_{12}r_{23} = s_{23} r_{13} ~~~ \text{ and } ~~~ s_{13} r_{12} = s_{12} s_{23} = s_{23} s_{13},
\end{align} 
then $(R,S)$ is a Rota-Baxter system. We call the above system (\ref{fs-system}) of equations as the Frobenius-separability system in view of \cite{beidar}.
\end{remark}

In the rest of this subsection, we will consider the generalized left averaging system. Note that if $(M, l, r)$ is an $A$-bimodule then $(M, l, 0)$ and $(M, 0, r)$ are both $A$-bimodules. Then it follows that any generalized left (resp. right) averaging system is a generalized Rota-Baxter system on the $A$-bimodule $(M, l, 0)$ (resp. $(M, 0, r)$). Therefore, we get the following results.

\begin{thm}
Let $A$ be an associative algebra and $M$ be an $A$-bimodule. Then there is a graded Lie algebra structure on the graded vector space $\bigoplus_{n \geq 0} \mathrm{Hom}(M^{\otimes n }, A \oplus A)$ whose Maurer-Cartan elements are given by generalized left (resp. right) averaging systems. 
\end{thm}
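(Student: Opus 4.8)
The plan is to reduce the statement directly to Theorem~\ref{thm-rbs-mc} by exploiting the observation recorded immediately before the theorem. A generalized left averaging system on the $A$-bimodule $(M, l, r)$ is nothing but a generalized Rota-Baxter system on the $A$-bimodule $(M, l, 0)$ obtained by declaring the right action to be zero: setting $u \cdot S(v) = 0$ in the defining identities (\ref{rb-system1})--(\ref{rb-system2}) leaves exactly the two equations $R(u)R(v) = R(R(u) \cdot v)$ and $S(u)S(v) = S(R(u) \cdot v)$ of a generalized left averaging system. Symmetrically, a generalized right averaging system on $(M, l, r)$ is a generalized Rota-Baxter system on $(M, 0, r)$, obtained by killing the left action.

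First I would verify, as already noted in the text, that $(M, l, 0)$ and $(M, 0, r)$ are genuine $A$-bimodules: the associativity axioms involving only $l$ (resp. only $r$) are inherited unchanged, and the mixed axiom $(a \cdot u) \cdot b = a \cdot (u \cdot b)$ holds trivially since both sides vanish. With this in place, I would apply Theorem~\ref{thm-rbs-mc} to the bimodule $(M, l, 0)$. That theorem produces a graded Lie bracket $\llbracket ~, ~ \rrbracket$ on $\bigoplus_{n \geq 0} \mathrm{Hom}(M^{\otimes n}, A \oplus A)$ through the formulas (\ref{der-pr1})--(\ref{der-pr5}), into which one simply substitutes $r = 0$. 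Since Theorem~\ref{thm-rbs-mc} establishes the graded Jacobi identity for an \emph{arbitrary} bimodule, the specialized bracket is automatically a graded Lie algebra structure, and this is exactly the structure sought in the left-handed case.

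Finally, the Maurer-Cartan characterization transfers for free: by Theorem~\ref{thm-rbs-mc} applied to $(M, l, 0)$, a pair $(R, S) \in C^1(M, A)$ is a Maurer-Cartan element for this bracket if and only if it is a generalized Rota-Baxter system on $(M, l, 0)$, which by the opening reduction is precisely a generalized left averaging system on $M$. The right-handed statement follows by the identical argument with $(M, 0, r)$ in place of $(M, l, 0)$. I do not anticipate any genuine obstacle, since every ingredient has already been established; the only point deserving a line of care is to record that the bracket formulas remain well-defined and continue to satisfy the graded Jacobi identity after the substitution $r = 0$ (resp. $l = 0$), and this is immediate from the fact that Theorem~\ref{thm-rbs-mc} was proved for an arbitrary $A$-bimodule.
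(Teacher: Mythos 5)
Your proposal is correct and is exactly the paper's argument: the paper also proves this theorem by observing that $(M,l,0)$ and $(M,0,r)$ are $A$-bimodules, that a generalized left (resp.\ right) averaging system is precisely a generalized Rota-Baxter system on $(M,l,0)$ (resp.\ $(M,0,r)$), and then invoking Theorem~\ref{thm-rbs-mc} for these bimodules. Nothing is missing; your extra care about the bracket formulas specializing under a zero action is the same point the paper leaves implicit.
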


The induced cohomology groups are the cohomology of the generalized left (resp. right) averaging system. One may define deformations of a generalized left (resp. right) averaging system which are governed by the above cohomology. 

\section{Generalized Rota-Baxter systems on $A_\infty$-algebras}\label{sec-6}

In this section, we introduce generalized Rota-Baxter systems on a bimodule over an $A_\infty$-algebra. They are a generalization of Rota-Baxter operators on $A_\infty$-algebras considered in \cite{das1}.  We show that a generalized Rota-Baxter system induces a $Dend_\infty$-algebra structure. We first recall the notion of $A_\infty$-algebras from \cite{sta}.

\begin{defn}
An {\bf $A_\infty$-algebra} consists of a graded vector space $\mathcal{A} = \oplus \mathcal{A}_i$ together with a collection of multilinear maps $\{ \mu_k : \mathcal{A}^{\otimes k } \rightarrow \mathcal{A} |~ \mathrm{deg} (\mu_k ) =  k-2\}_{k \geq 1 }$ satisfying the following set of identities: for any $n \geq 1$,
\begin{align}\label{a-inf-id}
\sum_{i+j = n+1} \sum_{\lambda =1}^{j} (-1)^{\lambda (i+1) + i (|a_1| + \cdots + |a_{\lambda -1 }|)} ~ \mu_{j} \big(  a_1, \ldots, a_{\lambda -1}, \mu_i ( a_{\lambda}, \ldots, a_{\lambda + i-1}),
 a_{\lambda + i}, \ldots, a_n   \big) = 0,
\end{align}
for $a_i \in A_{|a_i|}, 1 \leq i \leq n.$
\end{defn}

When $\mu_k = 0$ for $k \geq 3$, we get differential graded associative algebras. If further $\mu_1 = 0$, one obtains graded associative algebras. An $A_\infty$-algebra whose underlying graded vector space $\mathcal{A}$ is concentrated in degree $0$ is nothing but an associative algebra.

Let $(\mathcal{A}, \mu_k)$ be an $A_\infty$-algebra. An $A_\infty$-bimodule over it consists of a graded vector space $\mathcal{M} = \oplus \mathcal{M}_i$ together with a collection of multilinear maps $\eta_k : \bigoplus_{i+1+j=k} (\mathcal{A}^{\otimes i} \otimes \mathcal{M} \otimes \mathcal{A}^{\otimes j}) \rightarrow \mathcal{M}$, for $k \geq 1$ with $\mathrm{deg}(\eta_k) = k-2$. These multilinear maps are supposed to satisfy the identities (\ref{a-inf-id}) with exactly one of $a_1, \ldots, a_n$ is from $\mathcal{M}$ and the corresponding multilinear operation $\mu$ is replaced by $\eta$.

It follows that any $A_\infty$-algebra $(\mathcal{A}, \mu_k)$ is an $A_\infty$-bimodule over itself.

\begin{defn}
Let $(\mathcal{A}, \mu_k)$ be an $A_\infty$-algebra and $(\mathcal{M}, \eta_k)$ be an $A_\infty$-bimodule. A pair $(R, S)$ of degree $0$ linear maps from $\mathcal{M}$ to $\mathcal{A}$ is said to be a {\bf generalized Rota-Baxter system} if they satisfy
\begin{align}
\mu_k (R(u_1), \ldots, R(u_k)) =~& R \big(    \sum_{i=1}^k  \eta_k ( R(u_1), \ldots , R(u_{i-1}), u_i, S(u_{i+1}), \ldots, S(u_k) ) \big), \label{rota-sys-defn1}\\
 \mu_k (S(u_1), \ldots, S(u_k)) =~& S \big(    \sum_{i=1}^k  \eta_k ( R(u_1), \ldots , R(u_{i-1}), u_i, S(u_{i+1}), \ldots, S(u_k) ) \big), \label{rota-sys-defn2}
\end{align}
for each $k \geq 1$ and $u_1, \ldots, u_k \in \mathcal{M}$.
\end{defn}

When $R = S$, we call $R$ a generalized Rota-Baxter operator. On the other hand if the $A_\infty$-bimodule is taken to be $\mathcal{A}$ itself, we call $(R,S)$ a Rota-Baxter system on the $A_\infty$-algebra $(\mathcal{A}, \mu_k)$.  Any Rota-Baxter system on an associative algebra can be seen as a Rota-Baxter system on an $A_\infty$-algebra whose underlying graded vector space is concentrated in degree $0$.

It is known that an $A_\infty$-algebra structure on $\mathcal{A}$ is equivalent to a square-zero coderivation (of degree $-1$) on the cofree coassociative coalgebra $T^c (s \mathcal{A})$. It could be interesting to find the interpretation of a Rota-Baxter operator (more generally, a Rota-Baxter system) on an $A_\infty$-algebra in terms of the compatibility with the coderivation on $T^c (s \mathcal{A})$.

The above definition of a (generalized) Rota-Baxter system is more justified by the following example and Theorem \ref{thm-rbs-dend}.

\begin{exam}
Let $A$ be an associative algebra and $(R, S)$ be a Rota-Baxter system on $A$. We denote these data by the triple $(A, R, S)$. A bimodule over the triple $(A, R,S)$ consists of an $A$-bimodule $M$ together with linear maps $R_M , S_M : M \rightarrow M$ satisfying
\begin{align*}
\begin{cases}
R(a) \cdot R_M (u) = R_M ( R(a) \cdot u + a \cdot S_M (u)) \\
S(a) \cdot  S_M (u) = S_M ( R(a) \cdot u + a \cdot S_M (u)),
\end{cases}
\quad
\begin{cases}
R_M (u)  \cdot R (a) = R_M ( R_M (u) \cdot a + u \cdot  R(a))\\
S_M (u) \cdot S (a) = S_M ( R_M (u) \cdot  a + u \cdot R(a)),
\end{cases}
\end{align*}
for $a \in A, u \in M$.
Let $(M, R_M, S_M)$ and $(N, R_N, S_N)$ be two bimodules over the triple $(A, R, S)$. A morphism between them is given by an $A$-bimodule map $ d : M \rightarrow N$ satisfying $R_N \circ d = d \circ R_M$ and $S_N \circ d =  d \circ S_M$. In this case, we first define an $A_\infty$-algebra structure on the $2$-term complex $M \xrightarrow{\mu_1 = d} A \oplus N$ with structure maps given by
\begin{align*}
\mu_2 ((a, n_1), (b, n_2 )) =~& (ab, a \cdot n_2 + n_1 \cdot b), \\
\mu_2 ((a, n_1), u) =~& a \cdot u, \\
\mu_2 (u, (a, n_1)) =~& u \cdot a,\\
\mu_k =~& 0, ~\text{ for } k \geq 3.
\end{align*}
Then it can be checked that $\overline{R} = (( R \oplus R_N) , R_M)$ and $\overline{ S} = (( S \oplus S_N), S_M)$ constitute a Rota-Baxter system on the above $A_\infty$-algebra.
\end{exam}

Next, we recall the notion of $Dend_\infty$-algebras introduced in \cite{lod-val-book}. Here we will consider the equivalent definition given in \cite{das1}. First recall from \cite{das1} that there are certain maps $R_0 (m; \overbrace{1, \ldots, \underbrace{n}_{i\text{-th}}, \ldots, 1}^m) : C_{m+n-1} \rightarrow C_m$ and $R_i (m; \overbrace{1, \ldots, \underbrace{n}_{i\text{-th}}, \ldots, 1}^{m}) : C_{m+n-1} \rightarrow \mathbb{K}[C_n]$ which are given by the following table:

\medskip

\begin{center}
\begin{tabular}{|c|c|c|}
\hline
$[r] \in C_{m+n-1}$ & $R_0 (m;1, \ldots, n, \ldots, 1)([r])$ & $R_i (m; 1, \ldots, n, \ldots, 1)([r])$ \\ \hline \hline
$1 \leq r \leq i-1$ & $[r]$ & $[1]+\cdots+[n]$\\ \hline
$i \leq r \leq i+n-1$ & $[i]$  & $[r-i+1]$\\ \hline
$ i+n \leq r \leq m+n-1$ & $[r-n+1]$ & $[1]+ \cdots+ [n]$ \\ \hline
\end{tabular}
\end{center}

\medskip

\begin{defn}
A {\bf $Dend_\infty$-algebra} consists of a graded vector space $\mathcal{A} = \oplus \mathcal{A}_i$ together with a collection $\{ \mu_{k, [r]} : \mathcal{A}^{\otimes k} \rightarrow \mathcal{A} ~| ~[r] \in C_k, \mathrm{deg}(\mu_{k, [r]}) =k-2 \}_{k \geq 1}$ of multilinear maps satisfying the following identities: for each $n \geq 1$ and $[r] \in C_n$,
\begin{align*}
\sum_{i+j= n+1} \sum_{\lambda = 1}^j~(-1)^{\lambda (i+1) + i (|a_1|+ \cdots + |a_{\lambda -1}|) } ~\mu_{j, R_0(j;1, \ldots, i, \ldots, 1)[r]} \big( a_1, \ldots, a_{\lambda -1},
\mu_{i, R_\lambda (j;1, \ldots, i, \ldots, 1)[r]} (a_\lambda, \ldots, a_{\lambda + i -1} ),\\ a_{\lambda + i}, \ldots, a_n \big) = 0, 
\end{align*}
for $a_1, \ldots, a_n \in \mathcal{A}.$
\end{defn}

A $Dend_\infty$-algebra $(\mathcal{A}, \mu_{k, [r]})$ whose underlying graded vector space $\mathcal{A}$ is concentrated in degree $0$ is a dendriform algebra with $\prec ~= \mu_{2, [1]}$ and $\succ ~= \mu_{2, [2]}.$ It has been proved in \cite{lod-val-book,das1} that if $(\mathcal{A}, \mu_{k, [r]})$ is a $Dend_\infty$-algebra then $(\mathcal{A}, \mu_k)$ is an $A_\infty$-algebra, where $\mu_k = \mu_{k, [1]} + \cdots + \mu_{k, [k]}$, for $1 \leq k < \infty$. 

The following theorem is the homotopy version of Proposition \ref{RBS-dend}.

\begin{thm}\label{thm-rbs-dend}
Let $(\mathcal{A}, \mu_k)$ be an $A_\infty$-algebra and $(\mathcal{M}, \eta_k)$ be an $A_\infty$-bimodule. If $(R, S)$ is  a generalized Rota-Baxter system, then $(\mathcal{M}, \mu_{k , [r]} )$ is a $Dend_\infty$-algebra where
\begin{align*}
\mu_{k, [r]} ( u_1, \ldots, u_k ) =  \eta_k \big( R(u_1), \ldots , R(u_{r-1}), u_r, S(u_{r+1}), \ldots, S(u_k) \big), ~\text{ for } k \geq 1 \text{ and } 1 \leq r \leq k.
\end{align*}
\end{thm}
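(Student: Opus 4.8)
The plan is to verify the defining $Dend_\infty$ identity for $(\mathcal{M}, \mu_{k,[r]})$ directly, by substituting the definition $\mu_{k,[s]}(u_1,\dots,u_k) = \eta_k(R(u_1),\dots,R(u_{s-1}),u_s,S(u_{s+1}),\dots,S(u_k))$ and showing that, for each fixed $n\ge 1$ and $[r]\in C_n$, the identity collapses to a single instance of the $A_\infty$-bimodule identity (\ref{a-inf-id}) for $\eta$, evaluated on the decorated string $(R(u_1),\dots,R(u_{r-1}),u_r,S(u_{r+1}),\dots,S(u_n))$ in which $u_r\in\mathcal{M}$ is the unique module entry. So I would fix $n$, $[r]$ and $u_1,\dots,u_n\in\mathcal{M}$, write the left-hand side of the $Dend_\infty$ identity as the double sum over $i+j=n+1$ and $1\le\lambda\le j$, and analyze a typical term $\mu_{j,R_0[r]}(u_1,\dots,u_{\lambda-1},\mu_{i,R_\lambda[r]}(u_\lambda,\dots,u_{\lambda+i-1}),u_{\lambda+i},\dots,u_n)$ using the table defining $R_0$ and $R_\lambda$.

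The core of the argument is the three-way case split dictated by that table, according to the position of the plain slot $r$ relative to the inner block $\{\lambda,\dots,\lambda+i-1\}$. When $r\le\lambda-1$ the table gives $R_\lambda[r]=[1]+\cdots+[i]$, so the inner factor is $\mu_{i,[1]+\cdots+[i]}(u_\lambda,\dots,u_{\lambda+i-1}) = \sum_{t=1}^i \eta_i(R(u_\lambda),\dots,u_{\lambda+t-1},\dots,S(u_{\lambda+i-1}))$; since this factor sits to the right of the plain slot in the outer operation it is decorated by $S$, and relation (\ref{rota-sys-defn2}) collapses $S(\mu_{i,[1]+\cdots+[i]}(\dots))$ to $\mu_i(S(u_\lambda),\dots,S(u_{\lambda+i-1}))$. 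Symmetrically, when $\lambda+i\le r$ the inner factor is decorated by $R$ and relation (\ref{rota-sys-defn1}) collapses it to $\mu_i(R(u_\lambda),\dots,R(u_{\lambda+i-1}))$. In both of these cases the term becomes exactly the summand of (\ref{a-inf-id}) in which the inner operation is the algebra product $\mu_i$ applied to $\mathcal{A}$-entries lying entirely to one side of $u_r$. In the remaining case $\lambda\le r\le\lambda+i-1$, the table gives $R_0[r]=[\lambda]$ and $R_\lambda[r]=[r-\lambda+1]$, so the inner factor is the single $\eta_i$ straddling $u_r$ and the outer marked slot coincides with the block, producing precisely the summand of (\ref{a-inf-id}) whose inner operation is $\eta_i$, namely the one containing the module entry.

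Having matched every $Dend_\infty$ summand with a summand of the bimodule identity, the last step is to check the signs. Because $R$ and $S$ have degree $0$, one has $|R(u_s)|=|S(u_s)|=|u_s|$, so the Koszul prefactor $(-1)^{\lambda(i+1)+i(|u_1|+\cdots+|u_{\lambda-1}|)}$ appearing in the $Dend_\infty$ identity equals, term by term, the corresponding prefactor in (\ref{a-inf-id}) for the decorated string; moreover the summation ranges over $i,j,\lambda$ are identical in the two identities and the inner block starts at position $\lambda$ in both, so no reindexing is needed. Consequently the full left-hand side of the $Dend_\infty$ identity for $[r]$ equals the left-hand side of the $A_\infty$-bimodule identity for $\eta$ on $(R(u_1),\dots,u_r,\dots,S(u_n))$, which vanishes by hypothesis.

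I expect the main obstacle to be purely combinatorial bookkeeping: confirming that the three cases of the table are exhaustive and mutually exclusive and that, in particular, the boundary configurations where the inner block abuts $u_r$ are assigned to the correct case, so that each summand of (\ref{a-inf-id}) is produced exactly once. The role of the Rota-Baxter system relations (\ref{rota-sys-defn1}) and (\ref{rota-sys-defn2}) is precisely to convert the two \emph{pure algebra} decorated inner factors into genuine $\mu_i$-operations, and it is worth isolating the identity $\mu_{i,[1]+\cdots+[i]}=\sum_{t}\mu_{i,[t]}$ as the bridge that makes relations (\ref{rota-sys-defn1})--(\ref{rota-sys-defn2}) directly applicable.
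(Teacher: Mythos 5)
Your proposal is correct and follows essentially the same argument as the paper: the same three-way case split on the position of the module slot $r$ relative to the inner block $\{\lambda,\dots,\lambda+i-1\}$, with the Rota-Baxter relations (\ref{rota-sys-defn1})--(\ref{rota-sys-defn2}) used to interchange the decorated inner $\mu_i$-factors with $\sum_t\mu_{i,[t]}$, so that each summand of the $Dend_\infty$ identity matches a summand of the $A_\infty$-bimodule identity on $(R(u_1),\dots,u_r,\dots,S(u_n))$. The paper merely presents the computation in the opposite direction (rewriting the bimodule identity into the $Dend_\infty$ one), and your explicit remarks on signs and on the bridge identity $\mu_{i,[1]+\cdots+[i]}=\sum_t\mu_{i,[t]}$ are consistent with what the paper does implicitly.
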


\begin{proof}
Since $(R,S)$ is a generalized Rota-Baxter system, it follows from (\ref{rota-sys-defn1}) and (\ref{rota-sys-defn2}) that
\begin{align*}
\mu_k (R(u_1), \ldots, R(u_k)) = R (\sum_{i=1}^k \mu_{k, [i]} (u_1, \ldots, u_k)) ~~ \text{ and } ~~
\mu_k (S(u_1), \ldots, S(u_k)) = S (\sum_{i=1}^k \mu_{k, [i]} (u_1, \ldots, u_k)).
\end{align*}
On the other hand, the $A_\infty$ condition on $(\mathcal{A}, \mu_k)$ implies that
\begin{align}\label{a-inf-identi}
\sum_{i+j = n+1}^{} \sum_{\lambda =1}^{j} \pm ~ \mu_{j} \big( a_1, \ldots,  a_{\lambda -1}, \mu_i ( a_{\lambda}, \ldots, a_{\lambda + i-1}),
 a_{\lambda + i}, \ldots, a_n   \big) = 0,
\end{align} 
for all $n \geq 1$ and $a_1, \ldots, a_n \in A$. The same identity holds if exactly one of $(a_1, \ldots, a_n)$ is from $\mathcal{M}$ and the corresponding $\mu$ is replaced by $\eta$. Consider the elements $\big( R(u_1), \ldots,R(u_{r-1}), u_r, S(u_{r+1}), \ldots, S(u_n) \big)$, for some fixed $1 \leq r \leq n$.
For any fixed $i, j$ and $\lambda$, if $r \leq \lambda -1$, then the term inside the summation look
 \begin{align*}
& \eta_{j} \big( R  u_1, \ldots, u_r , \ldots  S u_{\lambda -1}, \mu_i ( Su_{\lambda}, \ldots, S u_{\lambda + i-1}),
S u_{\lambda + i}, \ldots, S u_n   \big)\\
& = \eta_j ( R  u_1, \ldots, u_r , \ldots  S u_{\lambda -1} , S (\sum_{k=1}^i \mu_{i, [k]} (u_\lambda , \ldots, u_{\lambda +i -1}) ), Su_{\lambda+i}, \ldots, Su_n \big) \\
& = \mu_{j, [r]} (u_1, \ldots, u_{\lambda -1} , \sum_{k=1}^i \mu_{i, [k]} (u_\lambda , \ldots, u_{\lambda +i -1}) , u_{\lambda +i}, \ldots, u_n)\\
& = \mu_{j, R_0 (j; 1, \ldots, i, \ldots, 1)[r]} (u_1, \ldots, u_{\lambda -1} , \mu_{i, R_\lambda (j; 1, \ldots, i, \ldots, 1)[r]} (u_{\lambda}, \ldots, u_{\lambda+i-1}), u_{\lambda +i}, \ldots, u_n).
 \end{align*}
If $\lambda \leq r \leq \lambda +i -1$, we get
\begin{align*}
& \eta_j (Ru_1, \ldots, R u_{\lambda -1}, \eta_i (Ru_\lambda, \ldots, u_r, \ldots, S u_{\lambda -i +1}), S u_{\lambda + i}, \ldots, S u_n) \\
& = \eta_j (Ru_1, \ldots, R u_{\lambda -1}, \mu_{i, [r - (\lambda -1)]} (u_\lambda, \ldots, u_{\lambda + i-1}), S u_{\lambda + i}, \ldots, S u_n ) \\
& = \mu_{j, \lambda } (u_1, \ldots, u_{\lambda -1}, \mu_{i, [r - (\lambda -1)]} (u_\lambda, \ldots, u_{\lambda + i-1}),  u_{\lambda + i}, \ldots,  u_n ) \\
& =  \mu_{j, R_0 (j; 1, \ldots, i, \ldots, 1)[r]} (u_1, \ldots, u_{\lambda -1} , \mu_{i, R_\lambda (j; 1, \ldots, i, \ldots, 1)[r]} (u_{\lambda}, \ldots, u_{\lambda+i-1}), u_{\lambda +i}, \ldots, u_n).
 \end{align*}
 Similarly, if $\lambda + i \leq r \leq n$,
 \begin{align*}
& \eta_j (Ru_1, \ldots, R u_{\lambda -1}, \mu_i (Ru_{\lambda}, \ldots, R u_{\lambda + i -1 }), R u_{\lambda + i}, \ldots, u_r , \ldots,  Su_n) \\
& =  \mu_{j, R_0 (j; 1, \ldots, i, \ldots, 1)[r]} (u_1, \ldots, u_{\lambda -1} , \mu_{i, R_\lambda (j; 1, \ldots, i, \ldots, 1)[r]} (u_{\lambda}, \ldots, u_{\lambda+i-1}), u_{\lambda +i}, \ldots, u_n).
 \end{align*}
Substitute these identities in (\ref{a-inf-identi}), we get the identities of a $Dend_\infty$-algebra.
\end{proof}

\section{Commuting Rota-Baxter systems and quadri-algebras}\label{sec-7}
In this section, we consider Rota-Baxter systems on a dendriform algebra and commuting Rota-Baxter systems on an associative algebra. We show how these structures induce quadri-algebras introduced by Aguiar and Loday \cite{aguiar-loday}. The homotopy version of these results is also described.

\begin{defn}
Let $D = (D, \prec, \succ)$ be a dendriform algebra. A {\bf Rota-Baxter system} on $D$ consists of a pair $(R,S)$ of linear maps on $D$ satisfying
\begin{align}\label{rbs-dend-alg}
\begin{cases} R(a) \prec R(b) = R (R(a) \prec b + a \prec S(b)) \\
S(a) \prec S(b) = S ( R(a) \prec b + a \prec S(b)), \end{cases} \quad
\begin{cases}  R(a) \succ R(b) = R ( R(a) \succ b + a \succ S(b)) \\
S(a) \succ S(b) = S( R(a) \succ b + a \succ S(b)),
\end{cases}
\end{align}
for $a, b \in D.$
\end{defn}

\begin{defn} \cite{aguiar-loday}
A {\bf quadri-algebra} is a vector space $A$ together with four binary operations $\nwarrow$ (north-west), $\nearrow$ (north-east), $\swarrow$ (south-west) and $\searrow$ (south-east) satisfying the following $3$ identities
\begin{align*}
&(a \nwarrow b ) \nwarrow c = a \nwarrow (b * c),&& (a \nearrow b) \nwarrow c = a \nearrow ( b \sqsubset c),&&  ( a \wedge b) \nearrow c = a \nearrow ( b \sqsupset c),\\
& ( a \swarrow b ) \nwarrow c = a \swarrow ( b \wedge c),&& (a \searrow b) \nwarrow c = a \searrow ( b \nwarrow c),&& ( a \vee b) \nearrow c = a \searrow ( b \nearrow c),\\
& (a \sqsubset b) \swarrow c = a \swarrow ( b \vee c),&& ( a \sqsupset b) \swarrow c = a \searrow ( b \swarrow c),&& ( a * b) \searrow c = a \searrow ( b \searrow c),
\end{align*}
for $a, b, c \in A$. Here we use the following notations 
\begin{align*}
&a \sqsubset b = a \nwarrow b + a \swarrow b, \qquad  a \sqsupset b = a \nearrow b + a \searrow b,\\
&a \wedge b = a \nwarrow b + a \nearrow b,  ~ \qquad  a \vee b = a \swarrow b + a \searrow b,\\
& a * b = a \sqsubset b +  a \sqsupset b =  a \wedge b + a \vee b = a \nwarrow b + a \nearrow b + a \swarrow b + a \searrow b.  
\end{align*}
\end{defn}

It turns out that in a quadri-algebra as above, $(A, \sqsubset, \sqsupset)$ and $(A, \wedge, \vee)$ are both dendriform algebras. Thus, $(A, *)$ is an associative algebra. Quadri-algebras are studied from multiplicative operadic points of view in \cite{das3}.

\begin{prop}\label{dend-rbs-quad}
Let $(D, \prec, \succ)$ be a dendriform algebra and $(R,S)$ be a Rota-Baxter system on $D$. Then $(D, \nwarrow, \nearrow,\swarrow, \searrow)$ is a quadri-algebra where
\begin{align*}
a \nwarrow b = a \prec S(b),\qquad a \nearrow b = a \succ S(b), \qquad a \swarrow b = R(a) \prec b, \qquad  a \searrow b = R(a) \succ b.
\end{align*}
\end{prop}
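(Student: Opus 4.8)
The plan is to verify the nine defining identities of a quadri-algebra directly, after first repackaging the hypotheses. Write $a \star b := a \prec b + a \succ b$ for the associative product of the dendriform algebra $D$ (reserving $*$ for the total quadri-algebra operation as in the definition). Summing the two $\prec$-relations and, separately, the two $\succ$-relations in (\ref{rbs-dend-alg}), and recalling that $a \sqsubset b = R(a)\prec b + a \prec S(b)$ and $a \sqsupset b = R(a)\succ b + a \succ S(b)$ are precisely the inner arguments appearing there, I get for each $T \in \{ R, S \}$ the relations
\begin{align*}
T(a) \prec T(b) = T(a \sqsubset b), \qquad T(a) \succ T(b) = T(a \sqsupset b), \qquad T(a) \star T(b) = T(a * b),
\end{align*}
the last obtained by adding the first two and using $a * b = a \sqsubset b + a \sqsupset b$. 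I also record the purely definitional identities $a \wedge b = a \star S(b)$ and $a \vee b = R(a) \star b$.

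With these in hand, the $3 \times 3$ array of quadri-algebra axioms acquires a transparent structure: its three columns are governed by the three dendriform axioms, and its three rows by where a Rota-Baxter relation is applied. Reading the four generating operations as $a \nwarrow b = a \prec S(b)$, $a \nearrow b = a \succ S(b)$, $a \swarrow b = R(a)\prec b$, $a \searrow b = R(a)\succ b$, every identity in the first column has both composed operations of $\prec$-type and is settled by $(x \prec y)\prec z = x \prec (y \star z)$; the second column mixes $\succ$ then $\prec$ and uses $(x \succ y)\prec z = x \succ (y \prec z)$; the third column uses $(x \star y)\succ z = x \succ (y \succ z)$. Within each column, the first-row identity applies an $S$-relation to the inner pair, for example $(a\nwarrow b)\nwarrow c = (a \prec S(b))\prec S(c) = a \prec (S(b)\star S(c)) = a \prec S(b*c) = a \nwarrow (b*c)$; the second-row identity needs no Rota-Baxter relation at all, for example $(a\swarrow b)\nwarrow c = (R(a)\prec b)\prec S(c) = R(a)\prec(b \star S(c)) = a \swarrow (b \wedge c)$; and the third-row identity first expands $R$ of a composite on the outer left via the relations above before applying the dendriform axiom, for example $(a\sqsubset b)\swarrow c = R(a\sqsubset b)\prec c = (R(a)\prec R(b))\prec c = R(a)\prec(R(b)\star c) = a \swarrow (b \vee c)$.

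I expect no genuine obstacle: in contrast with the associativity-type identities for plain Rota-Baxter operators, each of the nine identities collapses after a single dendriform rewriting combined with at most one Rota-Baxter relation, with no cancellation among several terms. The only point demanding care is bookkeeping---matching each identity to its column axiom and deciding whether a Rota-Baxter relation enters on the inner pair (first row) or on the outer composite (third row). Having carried out the representative cases of the first column as above, the remaining six verifications are strictly analogous, and assembling all nine establishes that $(D, \nwarrow, \nearrow, \swarrow, \searrow)$ is a quadri-algebra.
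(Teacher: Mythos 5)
Your proposal is correct, and every step checks out: the repackaged relations $T(a) \prec T(b) = T(a \sqsubset b)$, $T(a) \succ T(b) = T(a \sqsupset b)$, $T(a) \star T(b) = T(a \ast b)$ for $T \in \{R,S\}$ are indeed just restatements (and the sum) of the four identities in the definition of a Rota-Baxter system on $D$, and the remaining six identities do collapse exactly according to your row/column scheme. The paper's proof is the same fundamental method---direct verification of three representative axioms with the remark that the other six are similar---but the two proofs sample different representatives and package the hypotheses differently. The paper verifies the first \emph{row} of the axiom array (namely $(a \nwarrow b)\nwarrow c$, $(a \nearrow b)\nwarrow c$, $(a \wedge b)\nearrow c$), working with the Rota-Baxter relations in expanded form; this exercises all three dendriform axioms but only the case where an $S$-relation is applied to the inner pair. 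You verify the first \emph{column} ($(a \nwarrow b)\nwarrow c$, $(a \swarrow b)\nwarrow c$, $(a \sqsubset b)\swarrow c$), which exercises only the first dendriform axiom but all three ways the Rota-Baxter hypothesis enters: $S$-relation inside, no relation at all, and the $R$-relation applied to the outer composite $R(a \sqsubset b) = R(a) \prec R(b)$. That last case is structurally the most distinctive one, and the paper never displays it explicitly, so your sample is arguably the more informative choice; your preliminary repackaging in terms of $\sqsubset$, $\sqsupset$, $\ast$ is also what makes the ``strictly analogous'' claim for the remaining six identities genuinely transparent rather than a matter of faith. One small wording slip: you say you obtain the displayed relations by ``summing the two $\prec$-relations and the two $\succ$-relations,'' but those pairs (one for $R$, one for $S$) are not summed---each is read off individually as $T(a)\prec T(b) = T(a \sqsubset b)$, and the only summation is, as you then correctly say, adding the $\prec$- and $\succ$-relations for a fixed $T$.
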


\begin{proof}
We will verify the first $3$ identities of a quadri-algebra. The verification of remaining $6$ identities are similar. For the first identity, we have
\begin{align*}
(a \nwarrow b) \nwarrow c = ( a \prec S(b)) \prec S(c) =~& a \prec ( S(b) \prec S(c)) + a \prec ( S(b) \succ S(c)) \\
=~& a \prec  S ( R(b) \prec c + b \prec S(c)) + a \prec  S ( R(b) \succ c + b \succ S(c)) =  a \nwarrow (b  * c).
\end{align*}
Similarly,
\begin{align*}
( a \nearrow b) \nwarrow c = ( a \succ S(b)) \prec S(c) = a \succ ( S(b) \prec S(c)) = a \succ S ( R(b) \prec c + b \prec S(c)) = a \nearrow ( b \sqsubset c),
\end{align*}
\begin{align*}
( a \wedge b) \nearrow c = ( a \prec S(b) + a \succ S(b) \succ S(c) = a \succ ( S(b) \succ S(c)) =~& a \succ S ( R(b) \succ c + b \succ S(c))  \\
=~& a \nearrow ( b \sqsupset c). 
\end{align*}
\end{proof}

Two Rota-Baxter systems $(P, Q)$ and $(R,S)$ on an associative algebra $A$ are said to {\bf commute} if
\begin{align*}
P \circ R = R \circ P, \qquad P \circ S = S \circ P, \qquad Q \circ R = R \circ Q, \qquad Q \circ S  = S \circ Q.
\end{align*}

\begin{prop}\label{rbs-rbs-comm}
Let $(P,Q)$ and $(R,S)$ be two commuting Rota-Baxter systems on an associative algebra $A$. Then $(R, S)$ is a Rota-Baxter system on the dendriform algebra $(A, \prec, \succ)$ induced from $(P, Q)$, i.e. $ a \prec b = a Q(b)$ and $a \succ b = P(a) b$, for $a,b \in A$.
\end{prop}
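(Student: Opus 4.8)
The plan is to verify directly the four identities in (\ref{rbs-dend-alg}) defining a Rota-Baxter system $(R,S)$ on the dendriform algebra $(A, \prec, \succ)$ induced from $(P,Q)$. After substituting the explicit products $a \prec b = a Q(b)$ and $a \succ b = P(a) b$, each of these identities turns into a statement involving only the multiplication of $A$ and the maps $P, Q, R, S$; the commutativity relations $P \circ R = R \circ P$, $P \circ S = S \circ P$, $Q \circ R = R \circ Q$, $Q \circ S = S \circ Q$ will then let me reduce each one to an instance of the associative Rota-Baxter identities for $(R,S)$ on $A$.

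First I would dispose of the two $\prec$-identities. For the first, the left-hand side $R(a) \prec R(b) = R(a) Q(R(b))$ becomes $R(a) R(Q(b))$ using $QR = RQ$, while the right-hand side $R(R(a) Q(b) + a Q(S(b)))$ becomes $R(R(a) Q(b) + a S(Q(b)))$ using $QS = SQ$; these two expressions agree by the associative identity $R(u) R(v) = R(R(u) v + u S(v))$ applied with $u = a$ and $v = Q(b)$. The second $\prec$-identity reduces, by the same manipulation, to the companion identity $S(u) S(v) = S(R(u) v + u S(v))$ with the identical substitution.

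Next I would treat the two $\succ$-identities symmetrically, now moving $P$ across $R$ and $S$. For the first, $R(a) \succ R(b) = P(R(a)) R(b) = R(P(a)) R(b)$ by $PR = RP$, and the right-hand side rewrites to $R(R(P(a)) b + P(a) S(b))$; the two sides coincide by the associative identity for $R$ with $u = P(a)$, $v = b$. The last $\succ$-identity follows likewise from the associative identity for $S$, using $PS = SP$ together with $PR = RP$ on the right-hand side.

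I do not expect a genuine obstacle: the argument is a routine verification and each of the four identities collapses onto one of the two associative Rota-Baxter identities of $(R,S)$. The only point requiring care is the bookkeeping — all four commutativity relations are used, each to slide the operator $P$ or $Q$ (arising from the definitions of $\prec$ and $\succ$) past $R$ or $S$ so that the identities for $(R,S)$ become applicable. The content of the proposition is precisely that this commutativity is the correct compatibility for $(R,S)$ to descend to a Rota-Baxter system on the dendriform structure built from $(P,Q)$.
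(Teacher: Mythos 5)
Your proposal is correct and follows essentially the same route as the paper: direct verification of the four identities in (\ref{rbs-dend-alg}), using the commutation relations to slide $P$ or $Q$ past $R$ or $S$ and then invoking the associative Rota-Baxter identities of $(R,S)$ with the shifted arguments (e.g.\ $u=a$, $v=Q(b)$ for the $\prec$-identities and $u=P(a)$, $v=b$ for the $\succ$-identities). The only difference is one of completeness: the paper writes out just the two $\prec$-identities and declares the $\succ$-identities ``similar,'' whereas you sketch all four.
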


\begin{proof}
We have
\begin{align*}
R(a) \prec R(b) = R(a) ( QR(b)) = R(a) (RQ(b)) = R (R(a) Q(b) + a SQ(b))= R (R(a) \prec b + a \prec S(b)),
\end{align*}
and
\begin{align*}
S(a) \prec S(b) = S(a) (QS(b)) = S(a) (SQ(b)) = S (R(a) Q(b) + a SQ(b)) = R( R(a) \prec b + a \prec S(b)).
\end{align*}
Here we have verified the first two identities of (\ref{rbs-dend-alg}). The other two identities are similar to verify.
\end{proof}

Combining Propositions \ref{dend-rbs-quad} and \ref{rbs-rbs-comm}, we get the following.
\begin{prop}
Let $(P, Q)$ and $(R,S)$ be two commuting Rota-Baxter systems on an associative algebra $A$. Then $(A, \nwarrow, \nearrow,\swarrow, \searrow)$ is a quadri-algebra, where
\begin{align*}
a \nwarrow b = a QS(b),\quad a \nearrow b = P(a) S(b), \quad a \swarrow b = R(a) Q(b), \quad a \searrow b = PR(a) b.
\end{align*}
\end{prop}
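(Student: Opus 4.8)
The plan is to prove this simply by composing the two propositions cited just before the statement, with no new computation beyond reading off the four products. First I would observe that the Rota-Baxter system $(P,Q)$ on $A$, viewed as a generalized Rota-Baxter system on the adjoint bimodule, induces by Proposition \ref{RBS-dend} a dendriform algebra structure on $A$ with operations $a \prec b = a\, Q(b)$ and $a \succ b = P(a)\, b$. This is precisely the dendriform algebra appearing in Proposition \ref{rbs-rbs-comm}.

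Next, since $(P,Q)$ and $(R,S)$ are assumed to commute, Proposition \ref{rbs-rbs-comm} tells us that $(R,S)$ is a Rota-Baxter system on the dendriform algebra $(A, \prec, \succ)$ just constructed. I would then feed this data into Proposition \ref{dend-rbs-quad}: a Rota-Baxter system on a dendriform algebra produces a quadri-algebra whose four operations are $a \nwarrow b = a \prec S(b)$, $a \nearrow b = a \succ S(b)$, $a \swarrow b = R(a) \prec b$ and $a \searrow b = R(a) \succ b$. Substituting the explicit formulas for $\prec$ and $\succ$ yields
\begin{align*}
a \nwarrow b &= a \prec S(b) = a\, Q(S(b)) = a\, QS(b), \\
a \nearrow b &= a \succ S(b) = P(a)\, S(b), \\
a \swarrow b &= R(a) \prec b = R(a)\, Q(b), \\
a \searrow b &= R(a) \succ b = P(R(a))\, b = PR(a)\, b,
\end{align*}
which are exactly the four products in the statement, so the quadri-algebra axioms hold automatically.

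Since both ingredient results are already established, there is no genuine analytic obstacle here; the only point requiring care is bookkeeping about roles. One must keep straight that $(P,Q)$ is the system producing the dendriform operations while $(R,S)$ is the system acting as a Rota-Baxter system \emph{on} that dendriform algebra, and that the commutation hypothesis is used exactly once, in the application of Proposition \ref{rbs-rbs-comm}. Confirming that the composite operations reduce to the stated monomials $QS$, $P(-)S$, $R(-)Q$ and $PR$ is then immediate from the definitions of $\prec$ and $\succ$.
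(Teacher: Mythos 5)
Your proposal is correct and is exactly the paper's argument: the paper derives this proposition by "combining Propositions \ref{dend-rbs-quad} and \ref{rbs-rbs-comm}," which is precisely your composition of the two cited results, with the four stated products obtained by substituting $a \prec b = a\,Q(b)$ and $a \succ b = P(a)\,b$ into the quadri-algebra operations of Proposition \ref{dend-rbs-quad}. Your explicit substitution check and the remark that the commutation hypothesis enters only through Proposition \ref{rbs-rbs-comm} match the paper's (implicit) reasoning.
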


\begin{exam}
Let $A$ be an associative algebra and $(r = r_{(1)} \otimes r_{(2)}, s = s_{(1)} \otimes s_{(2)} )$ be an associative Yang-Baxter pair. Then it has been shown in \cite[Proposition 3.4]{brz} that the pair $(R,S)$ is a Rota-Baxter system on $A$, where $R,S : A \rightarrow A$ are given by
\begin{align*}
R(a ) = r_{(1)} a r_{(2)}  \quad \text{ and } \quad S(a) = s_{(1)} a s_{(2)}.
\end{align*}
Suppose $(p= p_{(1)} \otimes p_{(2)}, q = q_{(1)} \otimes q_{(2)})$ is another associative Yang-Baxter pair (with associated Rota-Baxter system $(P,Q)$) such that $p$ and $q$ both commute with $r$ and $s$ as elements of $A \otimes A^{\mathrm{op}}$. Then $(P,Q)$ and $(R,S)$ are commuting Rota-Baxter systems.
\end{exam}

\begin{exam}
Let $A$ be an associative algebra and $\sigma, \tau$ be two commuting algebra maps. Suppose $P$ is a $\sigma$-twisted Rota-Baxter operator and $R$ is a $\tau$-twisted Rota-Baxter operator (see Example \ref{tw-rota}) such that $P \circ R = R \circ P$, $P \circ \tau = \tau \circ P$ and $R \circ \sigma = \sigma \circ R$. Then $(P, Q= \sigma \circ P)$ and $(R, S= \tau \circ R)$ are commuting Rota-Baxter systems.
\end{exam}

The above results can be even extended in the homotopy context. Note that the notion of $Quad_\infty$-algebras (homotopy quadri-algebras) can be explicitly define using \cite{das3,das-saha} where various Loday-type algebras and homotopy Loday-type algebras are considered. Here we first recall the definition.

Let $Q_n = C_n \times C_n$, for $n \geq 1$, where $C_n$'s are given in Subsection \ref{subsec-dend-rel}. Therefore, $Q_n = \{ ([r] ,[s]) |~ 1 \leq r, s \leq n \}$. We define the structure maps $\overline{R}_0 (m; 1, \ldots, n, \ldots, 1) : Q_{m+n-1} \rightarrow Q_m$ and $\overline{R}_i (m; 1, \ldots, n, \ldots, 1) : Q_{m+n-1} \rightarrow \mathbb{K}[C_n] \times \mathbb{K}[C_n]$ as the product of the structure maps defined by the table in Section \ref{sec-6}, i.e.
\begin{align*}
\overline{R}_j (m; 1, \ldots, n, \ldots, 1) ([r],[s]) = \big( R_j (m;1, \ldots, n, \ldots, 1) ([r]), R_j (m;1, \ldots, n, \ldots, 1) ([s]) \big), \text{ for } j=0,i.
\end{align*}

\begin{defn}
A {\bf $Quad_\infty$-algebra} is a graded vector space $\mathcal{A} = \oplus \mathcal{A}_i$ together with a collection of multilinear maps $\{ \mu_{k, ([r],[s])} : \mathcal{A}^{\otimes k} \rightarrow \mathcal{A} |~ ([r],[s]) \in Q_k,~ \mathrm{deg}(\mu_{k, ([r],[s])}) = k-2 \}_{k \geq 1}$ satisfying the followings: for each $n \geq 1$ and $([r],[s]) \in Q_n$,
\begin{align} \label{quad-inf-iden}
\sum_{i+j= n+1} \sum_{\lambda = 1}^j~(-1)^{\lambda (i+1) + i (|a_1|+ \cdots + |a_{\lambda -1}|) } ~\mu_{j, \overline{R}_0(j;1, \ldots, i, \ldots, 1)([r],[s])} \big( a_1, \ldots, a_{\lambda -1},
\mu_{i, \overline{R}_\lambda (j;1, \ldots, i, \ldots, 1)([r],[s])} (a_\lambda, \ldots, a_{\lambda + i -1} ),\\ a_{\lambda + i}, \ldots, a_n \big) = 0. \nonumber
\end{align}
\end{defn}

The following result is the homotopy version of the fact that a quadri-algebra induce two dendriform structures on the underlying vector space.

\begin{prop}
Let $(\mathcal{A}, \mu_{k, ([r],[s])} )$ be a $Quad_\infty$-algebra. Then $(\mathcal{A}, \mu'_{k, [r]})$ and $(\mathcal{A}, \mu''_{k, [r]})$ are $Dend_\infty$-algebras, where
\begin{align*}
\mu'_{k, [r]} = \mu_{k, ([r],[1])} + \cdots + \mu_{k, ([r],[k])} ~~~ \text{ and } ~~~ \mu''_{k, [r]} = \mu_{k, ([1],[r])} + \cdots + \mu_{k, ([k],[r])}, \text{ for } 1 \leq k < \infty,~ 1 \leq r \leq k.
\end{align*}
\end{prop}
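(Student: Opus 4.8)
The plan is to obtain every defining relation of $\mu'$ by summing the $Quad_\infty$-relations (\ref{quad-inf-iden}) over one of the two colour indices, and symmetrically for $\mu''$. Fix $n\geq 1$ and $[r]\in C_n$. I would write down (\ref{quad-inf-iden}) for the colour pair $([r],[s])$ and add these identities over all $[s]\in C_n$. Because the quadri structure maps act on each factor separately, $\overline R_0(j;1,\dots,i,\dots,1)([r],[s])=(R_0[r],R_0[s])$ and $\overline R_\lambda(j;1,\dots,i,\dots,1)([r],[s])=(R_\lambda[r],R_\lambda[s])$, the generic summand becomes $\mu_{j,(R_0[r],R_0[s])}(a_1,\dots,\mu_{i,(R_\lambda[r],R_\lambda[s])}(a_\lambda,\dots,a_{\lambda+i-1}),\dots,a_n)$. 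The sign $(-1)^{\lambda(i+1)+i(|a_1|+\cdots+|a_{\lambda-1}|)}$ is independent of the colours, so it factors out of the sum over $[s]$ and coincides with the sign carried by the $Dend_\infty$-relation.

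The combinatorial heart of the proof is to see how the pair $(R_0[s],R_\lambda[s])$ sweeps out colours as $[s]$ runs over $C_{i+j-1}$, for fixed $i,j,\lambda$. Reading the table of Section \ref{sec-6} with outer arity $j$, inner arity $i$ and insertion position $\lambda$, the range of $s$ splits into three blocks: for $1\leq s\leq\lambda-1$ one has $R_0[s]=[s]$ and $R_\lambda[s]=[1]+\cdots+[i]$; for $\lambda\leq s\leq\lambda+i-1$ one has $R_0[s]=[\lambda]$ and $R_\lambda[s]=[s-\lambda+1]$; and for $\lambda+i\leq s\leq i+j-1$ one has $R_0[s]=[s-i+1]$ and $R_\lambda[s]=[1]+\cdots+[i]$. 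In the first and third blocks the inner second-colour is the full sum $[1]+\cdots+[i]$, so the inner operation equals $\sum_{t=1}^{i}\mu_{i,(R_\lambda[r],[t])}=\mu'_{i,R_\lambda[r]}$ while the outer second-colour runs through $[1],\dots,[\lambda-1]$ and $[\lambda+1],\dots,[j]$; in the middle block the outer second-colour is frozen at $[\lambda]$ while the inner second-colour runs through $[1],\dots,[i]$, again producing the inner term $\mu'_{i,R_\lambda[r]}$ paired with the outer colour $[\lambda]$.

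Assembling the blocks, the outer second-colour sweeps all of $C_j$ exactly once and is uniformly paired with $\mu'_{i,R_\lambda[r]}$, so summing the outer contributions gives $\sum_{s'=1}^{j}\mu_{j,(R_0[r],[s'])}=\mu'_{j,R_0[r]}$. Hence the sum over $[s]$ of the $([r],[s])$-summand of (\ref{quad-inf-iden}) collapses precisely to the $[r]$-summand $\mu'_{j,R_0[r]}(a_1,\dots,\mu'_{i,R_\lambda[r]}(a_\lambda,\dots,a_{\lambda+i-1}),\dots,a_n)$ of the $Dend_\infty$-relation for $\mu'$. Since each term on the left vanishes, so does the total, establishing the $Dend_\infty$-identities for $\mu'$. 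The proof for $\mu''$ is identical after interchanging the two colour factors, which is legitimate because $\overline R_0$ and $\overline R_\lambda$ are symmetric in their two arguments; one instead sums (\ref{quad-inf-iden}) over the first colour index for a fixed second index $[s]$.

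The step I expect to be the main obstacle is the bookkeeping in this three-block decomposition: verifying that the degenerate middle block (where $R_0[s]$ is constant while $R_\lambda[s]$ varies) dovetails with the two generic blocks (where $R_\lambda[s]$ is the full sum while $R_0[s]$ varies) so that the outer colour covers $C_j$ exactly once with the inner term constant. Once this is checked, the remainder is a direct transcription of the definitions, with the colour-independent signs carrying through unchanged.
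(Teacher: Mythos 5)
Your proof is correct and takes essentially the same approach as the paper: the paper's (far terser) proof simply fixes $[r]$, sums the $Quad_\infty$-identity (\ref{quad-inf-iden}) over all $[s]\in C_n$, and asserts that this yields the $Dend_\infty$-identity for $\mu'$, arguing symmetrically for $\mu''$. Your three-block analysis of how the pair $(R_0[s],R_\lambda[s])$ sweeps the colours—so that the outer colour covers $C_j$ exactly once while the inner term is uniformly $\mu'_{i,R_\lambda[r]}$—is precisely the verification the paper leaves implicit, and you carry it out correctly.
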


\begin{proof}
In the identity (\ref{quad-inf-iden}), for any fix $[r] \in C_n$, we substitute $[s] = [1], \ldots, [n]$ and add all these identities. Then we simply get the $Dend_\infty$-algebra identities for $(\mathcal{A}, \mu_{k, [r]} )$. Similarly, for any fix $[s] \in C_n$, by substituting $[r] =[1], \ldots, [n]$ and adding all these, we get the $Dend_\infty$-algebra identities for $(\mathcal{A}, \mu''_{k, [r]})$.
\end{proof}

It follows from the above proposition that if $(\mathcal{A}, \mu_{k, ([r],[s])} )$ is a $Quad_\infty$-algebra, then $(\mathcal{A}, \mu_k)$ is an $A_\infty$-algebra, where
\begin{align*}
\mu_k = \mu'_{k,[1]} + \cdots + \mu'_{k,[k]} = \mu''_{k,[1]} + \cdots + \mu''_{k, [k]} = \sum_{([r],[s]) \in Q_k} \mu_{k, ([r],[s])}, ~ \text{ for } 1 \leq k < \infty. 
\end{align*}

\begin{defn}
Let $(\mathcal{A}, \mu_{k, [r]})$ be a $Dend_\infty$-algebra. A pair $(R,S)$ of degree $0$ linear maps on $\mathcal{A}$ is said to be a {\bf Rota-Baxter system} if for $1 \leq k < \infty$ and $1 \leq r \leq k$,
\begin{align}
\mu_{k, [r]} (R(a_1), \ldots, R(a_k)) =~& R \big( \sum_{i=1}^k ~\mu_{k, [r]} (R(a_1), \ldots, R(a_{i-1}), a_i, S(a_{i+1}), \ldots, S(a_k))  \big), \label{dend-inf-rbs1}\\
\mu_{k, [r]} (S(a_1), \ldots, S(a_k)) =~& S \big( \sum_{i=1}^k ~\mu_{k, [r]} (R(a_1), \ldots, R(a_{i-1}), a_i, S(a_{i+1}), \ldots, S(a_k))  \big). \label{dend-inf-rbs2}
\end{align}
\end{defn}

\begin{prop}
Let $(R,S)$ be a Rota-Baxter system in a $Dend_\infty$-algebra $(\mathcal{A}, \mu_{k, [r]})$. Then $(\mathcal{A}, \mu_{k, ([r],[s])} )$ is a $Quad_\infty$-algebra, where
\begin{align*}
\mu_{k, ([r], [s])} (a_1, \ldots, a_k ) = \mu_{k, [r]} ( R(a_1), \ldots, R(a_{s-1}), a_s, S(a_{s+1}), \ldots, S(a_k)),~ \text{ for } 1 \leq k < \infty,~ 1 \leq r,s \leq k.
\end{align*}
\end{prop}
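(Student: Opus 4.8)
The plan is to deduce the $Quad_\infty$ identities (\ref{quad-inf-iden}) for $(\mathcal{A},\mu_{k,([r],[s])})$ from the $Dend_\infty$ identities of $(\mathcal{A},\mu_{k,[r]})$, in exact analogy with the way Theorem \ref{thm-rbs-dend} extracted the $Dend_\infty$ identities from the $A_\infty$ ones, only one categorical level higher. There the single cell index $[r]$ and the structure maps $R_0,R_\lambda$ did all the work; here the first components of the pair $([r],[s])$ and of the product maps $\overline{R}_0,\overline{R}_\lambda$ are already governed by the ambient $Dend_\infty$ structure, while the second components $R_0(\ldots)[s]$, $R_\lambda(\ldots)[s]$ will be produced by the Rota-Baxter system $(R,S)$.

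First I would record the elementary consequence of the defining relations (\ref{dend-inf-rbs1}) and (\ref{dend-inf-rbs2}). Summing each over the distinguished slot and invoking the definition of $\mu_{k,([r],[s])}$ gives, for every $[r]\in C_k$,
\begin{align*}
\mu_{k,[r]}(R(a_1),\ldots,R(a_k)) &= R\Big(\textstyle\sum_{s=1}^{k}\mu_{k,([r],[s])}(a_1,\ldots,a_k)\Big),\\
\mu_{k,[r]}(S(a_1),\ldots,S(a_k)) &= S\Big(\textstyle\sum_{s=1}^{k}\mu_{k,([r],[s])}(a_1,\ldots,a_k)\Big).
\end{align*}
These are the engine of the argument: they pull $R$ or $S$ out of a homogeneous inner operation while replacing its single-cell second index by the full sum $[1]+\cdots+[i]$.

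Next, fixing $n\geq 1$ and $([r],[s])\in Q_n$, I would write down the $Dend_\infty$ identity for the first index $[r]$ and evaluate it on the mixed string $\big(R(a_1),\ldots,R(a_{s-1}),a_s,S(a_{s+1}),\ldots,S(a_n)\big)$ singled out by $s$. Because $R,S$ have degree $0$, the Koszul signs are unchanged, so the task is to match each term with the corresponding term of (\ref{quad-inf-iden}), splitting on the position of the bare slot $s$ relative to the inner block $\lambda,\ldots,\lambda+i-1$ exactly as in the table of Section \ref{sec-6}. If $s\le\lambda-1$ or $s\ge\lambda+i$ the inner block is homogeneous (all $S(a_p)$, resp.\ all $R(a_p)$), so the engine formulas pull $S$, resp.\ $R$, outside and collapse the inner second index to $[1]+\cdots+[i]=R_\lambda(\ldots)[s]$; the extracted $S$, resp.\ $R$, is precisely the wrapping that the outer map $\mu_{j,R_0(\ldots)[r]}$, read through its defining formula as a quadri-operation, requires at that slot, so the surviving bare argument lands in outer position $[s]$, resp.\ $[s-i+1]$, each equal to $R_0(\ldots)[s]$. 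If $\lambda\le s\le\lambda+i-1$ the bare slot lies inside the block, nothing is extracted, the inner operation is already $\mu_{i,\overline{R}_\lambda(\ldots)([r],[s])}$ with second index $[s-\lambda+1]=R_\lambda(\ldots)[s]$, and the outer distinguished slot is $[\lambda]=R_0(\ldots)[s]$. In every case the term becomes $\mu_{j,\overline{R}_0(\ldots)([r],[s])}\big(\ldots,\mu_{i,\overline{R}_\lambda(\ldots)([r],[s])}(\ldots),\ldots\big)$, so the vanishing $Dend_\infty$ sum for $[r]$ is exactly the $Quad_\infty$ identity for $([r],[s])$.

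The main obstacle is the three-way bookkeeping: one must verify that collapsing the inner block of length $i$ (and, in the homogeneous cases, summing the extracted second index over $1,\ldots,i$) reproduces precisely the second-index entries $R_0(\ldots)[s]$ and $R_\lambda(\ldots)[s]$ of the table, and that the $R$- or $S$-wrapping left behind by the engine formulas coincides with the wrapping demanded by the outer quadri-operation at the shifted position. This is the same computation performed for the first index in the proof of Theorem \ref{thm-rbs-dend}, so I would model it on that argument and merely track how the second index propagates; the signs, being blind to the insertion of the degree-$0$ maps $R,S$, need no separate treatment.
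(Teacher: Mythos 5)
Your proposal is correct and matches the paper's intended argument: the paper's own proof is just the remark that one argues exactly as in Theorem \ref{thm-rbs-dend}, and your three-case evaluation of the $Dend_\infty$ identity for $[r]$ on the mixed string determined by $s$ (pulling $R$ or $S$ out of homogeneous inner blocks via the defining relations (\ref{dend-inf-rbs1})--(\ref{dend-inf-rbs2}), and leaving the bare slot untouched when it lies inside the block) is precisely that argument lifted one level, with the second component of $\overline{R}_0$, $\overline{R}_\lambda$ tracked correctly. Nothing further is needed.
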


\begin{proof}
The proof is similar to the approach of Theorem \ref{thm-rbs-dend}, and hence we omit the details.
\end{proof}

Let $(\mathcal{A}, \mu_k)$ be an $A_\infty$-algebra. Two Rota-Baxter systems $(P,Q)$ and $(R,S)$ are said to be compatible if $P \circ R = R \circ P$, $P \circ S = S \circ P$, $Q \circ R = R \circ Q$ and $Q \circ S = S \circ Q$.

\begin{prop}
Let $(P,Q)$ and $(R,S)$ be two commuting Rota-Baxter systems on an $A_\infty$-algebra $(\mathcal{A}, \mu_k)$. Then $(R,S)$ is a Rota-Baxter system on the $Dend_\infty$-algebra $(\mathcal{A}, \mu_{k, [r]})$ induced from the Rota-Baxter system $(P,Q)$ by Theorem \ref{thm-rbs-dend}. Consequently, there is a $Quad_\infty$-algebra $(\mathcal{A}, \mu_{k, ([r],[s])})$.
\end{prop}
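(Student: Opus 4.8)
The plan is to obtain the $Quad_\infty$-statement as an immediate corollary of the first assertion together with the immediately preceding proposition, so that the real content is to verify that $(R,S)$ is a Rota-Baxter system on the $Dend_\infty$-algebra $(\mathcal{A},\mu_{k,[r]})$ induced from $(P,Q)$. Recall from Theorem \ref{thm-rbs-dend}, applied to the $A_\infty$-bimodule $\mathcal{A}$ over itself (so that $\eta_k=\mu_k$), that these induced operations are
\[
\mu_{k,[r]}(a_1,\ldots,a_k)=\mu_k\big(P(a_1),\ldots,P(a_{r-1}),a_r,Q(a_{r+1}),\ldots,Q(a_k)\big).
\]
I must therefore check identities (\ref{dend-inf-rbs1}) and (\ref{dend-inf-rbs2}) for the pair $(R,S)$ against these $\mu_{k,[r]}$.

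First I would treat the left-hand side of (\ref{dend-inf-rbs1}). Expanding $\mu_{k,[r]}(R(a_1),\ldots,R(a_k))$ by the formula above produces $\mu_k$ evaluated on the arguments $PR(a_1),\ldots,PR(a_{r-1}),R(a_r),QR(a_{r+1}),\ldots,QR(a_k)$; using the commuting relations $P\circ R=R\circ P$ and $Q\circ R=R\circ Q$ I rewrite every entry as $R$ applied to something, namely $\mu_k(R(b_1),\ldots,R(b_k))$ where $b_j=P(a_j)$ for $j<r$, $b_r=a_r$, and $b_j=Q(a_j)$ for $j>r$. Since $(R,S)$ is a Rota-Baxter system on $(\mathcal{A},\mu_k)$, identity (\ref{rota-sys-defn1}) applies to $\mu_k(R(b_1),\ldots,R(b_k))$ and rewrites it as $R$ of the sum $\sum_{i=1}^k\mu_k(R(b_1),\ldots,R(b_{i-1}),b_i,S(b_{i+1}),\ldots,S(b_k))$.

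The remaining step, and the one requiring the most care, is to identify this sum term by term with $\sum_{i=1}^k\mu_{k,[r]}(R(a_1),\ldots,R(a_{i-1}),a_i,S(a_{i+1}),\ldots,S(a_k))$, the expression appearing on the right of (\ref{dend-inf-rbs1}). Each summand of the latter is again expanded through the definition of $\mu_{k,[r]}$, after which I compare the two $\mu_k$-arguments slot by slot. The bookkeeping is governed by two independent cut-points: the index $r$ separating the $P$-decorated from the $Q$-decorated slots, and the index $i$ separating the $R$-decorated from the $S$-decorated slots. For each slot $j$ one crosses the cases $j<i,\ j=i,\ j>i$ with $j<r,\ j=r,\ j>r$; in every case the four hypotheses $P\circ R=R\circ P$, $P\circ S=S\circ P$, $Q\circ R=R\circ Q$, $Q\circ S=S\circ Q$ let me move the outer $R$ or $S$ past the inner $P$ or $Q$, and the two slot-entries coincide (for instance a slot with $j<i$ and $j<r$ reads $R(P(a_j))=PR(a_j)$ on the first side and $P(R(a_j))=PR(a_j)$ on the second). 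Summing over $i$ and applying $R$ yields exactly (\ref{dend-inf-rbs1}); replacing $R$ by $S$ throughout and invoking (\ref{rota-sys-defn2}) in place of (\ref{rota-sys-defn1}) gives (\ref{dend-inf-rbs2}) verbatim. Hence $(R,S)$ is a Rota-Baxter system on the induced $Dend_\infty$-algebra, and the immediately preceding proposition then produces the asserted $Quad_\infty$-structure $(\mathcal{A},\mu_{k,([r],[s])})$. The genuine obstacle is purely combinatorial, keeping the interplay of the two cut-points $r$ and $i$ straight, since no homotopy-coherence relations beyond the already-established $A_\infty$ Rota-Baxter identities and the commutativity hypotheses are needed; the argument is the exact homotopy analogue of Proposition \ref{rbs-rbs-comm}.
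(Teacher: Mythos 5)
Your proposal is correct and follows essentially the same route as the paper's proof: expand $\mu_{k,[r]}(R(a_1),\ldots,R(a_k))$ via the induced structure, use the commutation relations to pull out $R$, apply the $A_\infty$ Rota-Baxter identity (\ref{rota-sys-defn1}), commute back, and recognize each summand (split at the two cut-points $i$ and $r$) as a $\mu_{k,[r]}$-term, with (\ref{dend-inf-rbs2}) handled analogously and the $Quad_\infty$-structure following from the preceding proposition. The only caveat is your phrase ``replacing $R$ by $S$ throughout'' for the second identity, which is slightly loose — only the outer operator and the left-hand-side decorations change to $S$, while the inner sum retains both $R$ and $S$ — but the intended argument is the same one the paper dismisses as ``similar.''
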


\begin{proof}
We have
\begin{align*}
&\mu_{k, [r]} (R(a_1), \ldots, R(a_k)) \\
&= \mu_k ( PR(a_1), \ldots, PR(a_{r-1}), Ra_r, QR(a_{r+1}), \ldots, QR (a_k)) \\
&= \mu_k ( RP(a_1), \ldots, RP(a_{r-1}), Ra_r, RQ(a_{r+1}), \ldots, RQ (a_k)) \\
&= R \bigg( \sum_{i = 1}^{r-1}  \mu_k \big( RP(a_1), \ldots, RP(a_{i-1}), Pa_i, SP (a_{i+1}), \ldots, SP (a_{r-1}), Sa_r, SQ (a_{r+1}), \ldots, SQ (a_k)   \big)   \\
&  + \mu_{k} \big(    RP (a_1), \ldots, RP (a_{r-1}), a_r, SQ (a_{r+1}), \ldots, SQ (a_k) \big)   \\
&  +  \sum_{i = r+1}^k \mu_{k} \big( RP (a_1), \ldots, RP(a_{r-1}), Ra_r, RQ (a_{r+1}), \ldots, RQ (a_{i-1}), Q a_i, SQ (a_{i+1}), \ldots, SQ (a_k)  \big)   \bigg) \\
&= R \bigg( \sum_{i = 1}^{r-1}  \mu_k \big( PR(a_1), \ldots, PR(a_{i-1}), Pa_i, PS (a_{i+1}), \ldots, PS (a_{r-1}), Sa_r, QS (a_{r+1}), \ldots, QS (a_k)   \big)   \\
&  + \mu_{k} \big(    PR (a_1), \ldots, PR (a_{r-1}), a_r, QS (a_{r+1}), \ldots, QS (a_k) \big)   \\
&  +  \sum_{i = r+1}^k \mu_{k} \big( PR (a_1), \ldots, PR(a_{r-1}), Ra_r, QR (a_{r+1}), \ldots, QR (a_{i-1}), Q a_i, QS (a_{i+1}), \ldots, QS (a_k)  \big)   \bigg) \\
& = R \big( \sum_{i=1}^k \mu_{k, [r]} (R(a_1), \ldots, R(a_{i-1}), a_i, S(a_{i+1}), \ldots, S (a_{k}) ) \big).
\end{align*}
Thus, we have proved the identity (\ref{dend-inf-rbs1}). The proof of (\ref{dend-inf-rbs2}) is similar.
\end{proof}

\medskip

\noindent {\em Acknowledgements.} The research is supported by the postdoctoral fellowship of Indian Institute of Technology (IIT) Kanpur. The author thanks the Institute for support. This work is completely done at home during the lockdown period for COVID-19. He also wishes to thank his family members for support.

\end{document}